\newcommand{\R}{\mathbb{R}} 
\newcommand{\cD}{{\cal D}}
\newcommand{\cO}{{\cal O}}
\newcommand{\mA}{{\bf A}}
\newcommand{\mB}{{\bf B}}
\newcommand{\mH}{{\bf H}}
\newcommand{\mI}{{\bf I}}
\newcommand{\mL}{{\bf L}}
\newcommand{\mP}{{\bf P}}
\newcommand{\mS}{{\bf S}}
\newcommand{\mU}{{\bf U}}
\newcommand{\eqdef}{:=} 
\DeclareMathOperator{\dom}{dom}         
\DeclareMathOperator{\argmin}{argmin}        
\DeclareMathOperator{\argmax}{argmax}        
\providecommand{\Range}[1]{{\rm Range}\left( #1\right)}
\newcommand{\Prob}{\mathbb{P}}
\newcommand{\E}[1]{\mathbb{E}\left[#1\right] } 
\newcommand{\EE}[2]{{\bf E}_{#1}\left[#2\right] }
\declaretheorem[within=section]{definition}
\declaretheorem[sibling=definition]{theorem}
\declaretheorem[sibling=definition]{assumption}
\declaretheorem[sibling=definition]{corollary}
\declaretheorem[sibling=definition]{lemma}
\theoremstyle{remark}
\newtheorem{example}{Example} 
\newtheorem{remark}{Remark} 
			\newcommand{\level}{\chi^0 }
			\newcommand{\Tr}[1] {\mathrm{Tr}\left( {#1} \right)}
\def\ba{\begin{array}}
\def\ea{\end{array}}
\def\beann{\begin{eqnarray*}}
\def\eeann{\end{eqnarray*}}
\def\bea{\begin{eqnarray}}
\def\eea{\end{eqnarray}}
\def\BT{\begin{theorem}}
\def\ET{\end{theorem}}
\def\BL{\begin{lemma}}
\def\EL{\end{lemma}}
\def\BC{\begin{corollary}}
\def\EC{\end{corollary}}
\def\BE{\begin{example}}
\def\EE{\end{example}}
\def\BD{\begin{definition}}
\def\ED{\end{definition}}
\def\BR{\begin{remark}}
\def\ER{\end{remark}}
\def\BAS{\begin{assumption}}
\def\EAS{\end{assumption}}
\def\BI{\begin{itemize}}
\def\EI{\end{itemize}}
\def\BMP{\begin{minipage}{9.5cm}}
\def\EMP{\end{minipage}}
\def\MPT{\begin{minipage}{11.5cm}}
\def\EPT{\end{minipage}}
\def\Def{\stackrel{\mathrm{def}}{=}}
\def\dom{{\rm dom \,}}
\def\beq{\begin{equation}}
\def\eeq{\end{equation}}
\def\R{\mathbb{R}}
\newcommand{\refLE}[1]{\ensuremath{\stackrel{(\ref{#1})}{\leq}}}
\def\la{\langle}
\def\ra{\rangle}
\begin{document}

\title{\bf Stochastic Subspace Cubic Newton Method}

\author[1]{Filip Hanzely}
\author[2]{Nikita Doikov}
\author[1]{Peter Richt\'{a}rik}
\author[2]{Yurii Nesterov}

\affil[1]{King Abdullah University of Science and Technology, Thuwal, Saudi Arabia}
\affil[2]{Catholic University of Louvain, Louvain-la-Neuve, Belgium}

\date{February 21, 2020}
 
\maketitle

\begin{abstract}
In this paper, we propose a new randomized second-order optimization algorithm---Stochastic Subspace Cubic Newton (SSCN)---for minimizing a high dimensional convex function $f$. Our method can be seen both as a {\em stochastic} extension of the cubically-regularized Newton method of Nesterov and Polyak (2006), and a {\em second-order} enhancement of stochastic subspace descent of Kozak et al. (2019). We prove that as we vary the minibatch size, the global convergence rate of SSCN interpolates between the rate of stochastic coordinate descent (CD) and the rate of cubic regularized Newton, thus giving new insights into the connection between first and second-order methods. Remarkably, the local convergence rate of SSCN matches the rate of stochastic subspace descent applied to the problem of minimizing the quadratic function $\frac12 (x-x^*)^\top \nabla^2f(x^*)(x-x^*)$, where $x^*$ is the minimizer of $f$, and hence depends on the properties of $f$ at the optimum only. Our numerical experiments show that SSCN outperforms non-accelerated first-order CD algorithms while being competitive to their accelerated variants.
\end{abstract}

\tableofcontents

\section{Introduction}

In this work we consider the optimization problem
\begin{equation}\label{eq:problem}
    \min \limits_{x \in \R^d}  \left\{ F(x) \eqdef f(x) + \psi(x) \right\}\, ,
\end{equation}
where $f:\R^d\to \R$ is convex and twice differentiable and $\psi:\R^d\to \R\cup \{+\infty\}$ is a simple convex function. We are interested in the regime where the dimension $d$ is very large, which arises in many contexts, such as  the training of modern over-parameterized machine learning models. In this regime, coordinate descent (CD) methods, or more generally subspace descent methods, are the methods of choice. 

\subsection{Subspace descent methods}

Subspace descent methods rely on  update rules of the form
\begin{equation} \label{eq:update_general}
x^+ = x + \mS h,\qquad \mS \in \R^{d\times \tau(\mS)}, \qquad h\in \R^{\tau(\mS)},
\end{equation}
where $\mS$ is a thin matrix, typically with a negligible number of columns compared to the dimension (i.e., $\tau(\mS)\ll d$). That is, they move from $x$ to $x^+$ along the subspace spanned by the columns of $\mS$.

In these methods,  the subspace matrix $\mS$ is typically chosen first, followed by the determination of the parameters $h$ which define the linear combination of the columns determining the update direction. Several different rules have been proposed in the literature for choosing the matrix $\mS$, including greedy,  cyclic and randomized rules. In this work we consider a \textit{randomized} rule. In particular, we assume that $\mS$ is sampled from an arbitrary but fixed distribution $\cD$ restricted to requiring that $\mS$ be of full column rank\footnote{It is rather simple to extend our results to matrices $\mS$ which are column-rank deficient. However, this would introduce a rather heavy notation burden which we decided to avoid for the sake of clarity and readability.} with probability one. 

Once $\mS\sim \cD$ is sampled, a rule for deciding  the stepsize $h$ varies from algorithm to algorithm, but is mostly determined by the underlying \emph{oracle model} for information access to function $f$. For instance, first-order methods require  access to the subspace gradient $$\nabla_{\mS} f(x) \eqdef  \mS^\top \nabla f(x),$$ and are relatively well studied~\citep{nesterov2012efficiency, stich2013optimization, richtarik2014iteration, wright2015coordinate, SSD:Kozak2019}. At the other extreme are  variants performing a full subspace minimization, i.e.,  $f$ is minimized over the affine subspace given by $$\{ x+\mS h \, | \, h\in \R^{\tau(\mS)}\};$$ see \citep{chang2008coordinate}.  In particular, in this paper we are interested in the \emph{second-order} oracle model; i.e., we claim access both to the subspace gradient $\nabla_{\mS} f(x)$ and the subspace Hessian $$\nabla^2_{\mS} f(x)\eqdef \mS^\top \nabla^2f(x)\mS.$$

\subsection{Contributions}

We now summarize our contributions:

\begin{itemize}
\item[(a)] {\bf New 2nd order subspace method.} We propose a new stochastic subspace method---Stochastic Subspace Cubic Newton (SSCN)---constructed by minimizing an oracle-consistent global upper bound on the objective $f$ in each iteration (Section~\ref{sec:Alg}). This bound is formed using both the subspace gradient and the subspace Hessian at the current iterate and relies on Lipschitzness of the subspace Hessian. 

\item[(b)]  {\bf  Interpolating global rate.} We prove (Section~\ref{sec:global}) that SSCN enjoys a global convergence rate that interpolates between the rate of stochastic CD and the rate of cubic regularized Newton as one varies the expected dimension of the subspace, $\E{\tau(\mS)}$. 

\item[(c)]  {\bf Fast local rate.} Remarkably, we establish a local convergence bound for SSCN  (Section~\ref{sec:local}) that matches the rate of stochastic subspace descent (SSD)~\citep{gower2015randomized} applied to solving the problem \begin{equation} \label{eq:quadratic_optimum}
 \min_{x \in \R^d} \frac12 (x-x^*)^\top \nabla^2f(x^*)(x-x^*),
\end{equation} where $x^*$ is the solution of~\eqref{eq:problem}. Thus, SSCN behaves as {\em if} it had access to a perfect second order model of $f$ at the optimum, and was given the (intuitively much simpler) task of minimizing this model instead. 
Furthermore, note that SSD~\citep{gower2015randomized} applied to minimize a convex quadratic can be interpreted as doing an exact subspace search in each iteration, i.e., it minimizes the objective exactly along the active subspace~\citep{richtarik2017stochastic}. Therefore, the local rate of SSCN matches the rate of the greediest strategy for choosing $h$ in the active subspace, and as such, this rate is the best one can hope for a method that does not incorporate some form of acceleration.

\item[(d)]  {\bf Special cases.} We discuss in Section~\ref{sec:special} how SSCN reduces to several existing stochastic second order methods in special cases, either recovering the best known rates, or improving upon them. This includes  SDSA~\citep{SDA}, CN~\citep{griewank1981modification, nesterov2006cubic} and RBCN~\citet{doikov2018randomized}. However, our method is more general and hence allows for more applications.

\end{itemize}

We discuss more remotely related literature in Section~\ref{sec:related_literature}. We now give a simple example of our setting.

\begin{example}[Coordinate subspace setup]
Let $\mI^d\in \R^{d\times d}$ be the identity and let $S$ be a random subset of $\{1, 2, \dots, d \}$. Given that $\mS = \mI^d_{(:,S)}$ with probability 1, the oracle model reveals $(\nabla f(x))_S$ and $(\nabla^2 f(x))_{(S,S)}$. Therefore, we have access to a random block of partial derivatives of $f$ and a block submatrix of its Hessian, both corresponding to the subset of indices $S$. Furthermore, the rule~\eqref{eq:update_general} updates a subset $S$ of coordinates only. In this setting, our method is a new \textit{second-order coordinate subspace descent} method.
\end{example}

\section{Preliminaries \label{sec:preliminaries}}

Throughout the paper, we assume that  $f$ is convex, twice differentiable, and sufficiently smooth and that $\psi$ is convex, albeit possibly non-differentiable.\footnote{We will also require separability of $\psi$; see Section~\ref{sec:setup}.}

\begin{assumption}\label{as:conv_lipschitz}
Function $f: \R^d \to \R$ is convex and twice differentiable with $M$-Lipschitz continuous Hessian. Function $\psi: \R^d \to \R \cup \{ +\infty \}$ is  proper closed and convex.
\end{assumption}

We always assume that a minimum of $F$  exists and by  $x^*$ denote any of its minimizers. We let $F^{*} \eqdef F(x^{*})$.

Since our method always takes steps along random subspaces spanned by the columns of $\mS \in \R^{d\times \tau(\mS)}$,  it is reasonable to define the Lipschitzness of the Hessian over the range of $\mS$:\footnote{By  $\|x\| \eqdef \la x, x \ra^{1/2}$we denote the standard Euclidean norm.}
\begin{equation}\label{eq:MS_def}
M_{\mS}  \eqdef \max_{x\in \R^d} \max_{\substack{  \;\; h \in \R^{\tau(\mS)}, \\ h \not= 0 } } \frac{ |\nabla^3 f(x)[\mS h]^3| }{\|\mS h\|^3}\, .
\end{equation}

As the next lemma shows,  the maximal value of $M_{\mS}$ for any $\mS$ of width $\tau$ can be up to $(\nicefrac{d}{\tau})^{\frac32}$ times smaller than $M$ and this will lead to  a tighter approximation of the objective. 
\begin{lemma} \label{lem:sharpness}
We have $$
M \geq \max_{\tau(\mS) = \tau }M_{\mS}.
$$
Moreover, there is a problem where $$ \max_{\tau(\mS) = \tau}M_{\mS} =\left(\frac{\tau}{d}\right)^{\frac32} M.$$ Lastly, if $\Range{\mS} = \Range{\mS'}$, then  $M_{\mS} = M_{\mS'}$.
\end{lemma}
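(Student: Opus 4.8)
The lemma has three parts, which I would tackle in order of increasing subtlety.

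\emph{Part 1: $M \geq \max_{\tau(\mS)=\tau} M_{\mS}$.} This should be immediate from the definitions. The global Lipschitz constant $M$ of the Hessian is characterized (under Assumption~\ref{as:conv_lipschitz}) by $|\nabla^3 f(x)[u]^3| \leq M\|u\|^3$ for all $x$ and all directions $u \in \R^d$. Specializing $u = \mS h$ and dividing by $\|\mS h\|^3$ gives $M_{\mS} \leq M$ for every $\mS$, hence the bound on the maximum over all $\mS$ of a fixed width.

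\emph{Part 3: range invariance.} I would do this before Part 2, since Part 2 uses it implicitly to reduce to $\tau$-dimensional subspaces rather than $d\times\tau$ matrices. If $\Range{\mS} = \Range{\mS'}$, then $\{\mS h : h \in \R^{\tau(\mS)}\} = \{\mS' h' : h' \in \R^{\tau(\mS')}\}$ as subsets of $\R^d$. Since the expression $|\nabla^3 f(x)[u]^3|/\|u\|^3$ in~\eqref{eq:MS_def} depends on $\mS$ only through the set of attainable directions $u = \mS h$ (the ratio is invariant to positive scaling of $u$, and the maximization over $x$ and over all nonzero $u$ in this set is identical for both matrices), we get $M_{\mS} = M_{\mS'}$.

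\emph{Part 2: the extremal example achieving $\max_{\tau(\mS)=\tau} M_{\mS} = (\tau/d)^{3/2} M$.} This is the main obstacle and requires exhibiting a concrete $f$. The natural candidate is a function whose third derivative is "spread out" isotropically, so that no low-dimensional subspace can capture more than its proportional share. I would try $f(x) = \frac{1}{c}\|x\|^3$ (or a smoothed variant to ensure twice differentiability at the origin, e.g.\ built from $(\|x\|^2 + \epsilon)^{3/2}$ and then taking a limit, or simply noting $\|x\|^3$ is $C^2$ with the third derivative understood in the appropriate sense away from $0$ and handled by continuity). For $f = \|x\|^3$ one computes $M = \nabla^3$-Lipschitz-type constant from the global bound, while for a subspace $\mS$ with orthonormal columns spanning a $\tau$-dimensional subspace $L$, the restriction $f|_{x^*+L}$ behaves like $\|h\|^3$ on $\R^\tau$; the point is that $M_{\mS}$ measures the third-derivative growth only along directions inside $L$, and by symmetry the worst-case ratio is attained but with a constant reflecting the $\tau$-dimensional geometry. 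The factor $(\tau/d)^{3/2}$ should emerge from comparing the operator-type norm that defines $M$ over $\R^d$ against the one defining $M_{\mS}$ over $\R^\tau$ --- concretely, for a separable cubic like $f(x) = \sum_{i=1}^d |x_i|^3$ the global constant picks up cross-coordinate interactions that scale like $d^{3/2}$ under the Euclidean norm, whereas restricting to $\tau$ coordinates picks up only $\tau^{3/2}$; taking the ratio and using Part 3 to choose $\mS$ as a coordinate selector gives the claim. The delicate step is getting the constants exactly right and confirming that coordinate subspaces are in fact worst-case (no rotated subspace does worse), which follows from the permutation/sign symmetry of the separable example.
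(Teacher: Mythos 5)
Your Parts 1 and 3 are fine and coincide with the paper's (essentially one-line) treatment: Part 1 is the specialization $u = \mS h$ of the global bound $|\nabla^3 f(x)[u]^3| \le M\|u\|^3$, and Part 3 holds because $\{\mS h : h \in \R^{\tau(\mS)}\} = \Range{\mS}$ when $\mS$ has full column rank, so the quantity \eqref{eq:MS_def} depends on $\mS$ only through its range.

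The genuine gap is in Part 2: neither of your candidate functions exhibits the $(\nicefrac{\tau}{d})^{3/2}$ separation, and the heuristic guiding your choice is backwards. For the separable cubic $f(x) = \sum_i |x_i|^3$ one has $\nabla^3 f(x)[u]^3 = 6\sum_i \signum(x_i)\, u_i^3$, and $\max_{\|u\|=1} \bigl|\sum_i \pm u_i^3\bigr| = 1$, attained at a single standard basis vector (since $\|u\|_3 \le \|u\|_2$); a separable function has no cross-coordinate interactions, its constant $M$ is dimension-independent, and $M_{\mS} = M$ for every coordinate selector. Indeed, the paper uses precisely $\frac16\sum_i x_i^3$ as the example where the inequality of Part 1 is an \emph{equality} --- the opposite of what Part 2 requires. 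The isotropic choice $f(x)=\|x\|^3$ fails for the same reason: a direct computation gives $\nabla^3 f(x)[u]^3 = \|u\|^3\,(9t - 3t^3)$ with $t = \la x, u\ra/(\|x\|\|u\|)$, so $M=6$, and because the maximization over $x$ in \eqref{eq:MS_def} ranges over all of $\R^d$ one may take $x \in \Range{\mS}$ aligned with $\mS h$, so every subspace attains the same supremum and $M_{\mS}=M$ again. The example that works is the \emph{maximally coupled} rank-one cubic of the paper, $f(x) = \frac16 (x^\top e)^3$, for which $\nabla^3 f(x)[u]^3 = (e^\top u)^3$: the global maximizer is $u = e/\sqrt{d}$, giving $M = d^{3/2}$, whereas a coordinate block $\mS = \mI^d_{(:,S)}$ with $|S| = \tau$ sees only $(\sum_{i \in S} h_i)^3 \le \tau^{3/2}\|h\|^3$, giving $M_{\mS} = \tau^{3/2} = (\nicefrac{\tau}{d})^{3/2} M$. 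Note also that your planned verification that ``no rotated subspace does worse'' points the wrong way: for this $f$, coordinate blocks are the \emph{best} (smallest-$M_{\mS}$) subspaces, while any width-$\tau$ subspace containing $e$ has $M_{\mS} = M$; the displayed equality is to be read over coordinate selectors, i.e., as exhibiting a problem and a width-$\tau$ sketch for which the gap $(\nicefrac{d}{\tau})^{3/2}$ is attained. Finally, as the paper remarks, this $f$ is convex only on the orthant $x \ge 0$, a caveat any complete write-up should carry.
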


The next lemma provides a direct motivation for our algorithm. It gives a global upper bound on the objective over a random subspace, given the first and second-order information at the current point. 

\begin{lemma}\label{lem:ub}
Let $x\in \R^d$, $\mS\in \R^{d\times \tau(\mS)}$, $h\in \R^{\tau(\mS)}$ and $x^+$ be as in \eqref{eq:update_general}. Then
\begin{align}
\left| f(x^+) -  f(x)  - \langle \nabla_{\mS} f(x), h \rangle - \frac12 \la \nabla^2_{\mS}f(x) h, h \ra \right|  \leq \frac{M_{\mS}}{6} \| \mS h\|^3. \label{eq:coordinate_ub}
\end{align}
As a consequence, we have
\begin{eqnarray} 
F(x^+) \leq  f(x) + T_{\mS}(x,h),
\label{eq:coordinate_ub_full}
\end{eqnarray}
where
$$T_{\mS}(x,h) \eqdef \langle \nabla_{\mS} f(x), h \rangle + \frac12 \la  \nabla^2_{\mS}  f(x) h, h \ra
+  \frac{M_{\mS}}{6} \| \mS h\|^3 + \psi(x+\mS h).$$
\end{lemma}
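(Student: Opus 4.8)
The plan is to derive the cubic inequality \eqref{eq:coordinate_ub} from a one-dimensional Taylor expansion of the restriction of $f$ to the line segment $t\mapsto x+t\mS h$, and then add $\psi(x+\mS h)$ to both sides to obtain \eqref{eq:coordinate_ub_full}. First I would define $\phi(t)\eqdef f(x+t\mS h)$ on $[0,1]$. Then $\phi'(t) = \langle \nabla f(x+t\mS h), \mS h\rangle = \langle \mS^\top\nabla f(x+t\mS h), h\rangle$, so $\phi'(0)=\langle \nabla_{\mS}f(x),h\rangle$; similarly $\phi''(t) = \langle \nabla^2 f(x+t\mS h)\mS h, \mS h\rangle$, so $\phi''(0)=\langle\nabla^2_{\mS}f(x)h,h\rangle$; and $\phi'''(t) = \nabla^3 f(x+t\mS h)[\mS h]^3$. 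By the integral form of Taylor's theorem,
\[
\phi(1) - \phi(0) - \phi'(0) - \tfrac12\phi''(0) = \int_0^1 \frac{(1-t)^2}{2}\,\phi'''(t)\,dt,
\]
which translates exactly to the left-hand side of \eqref{eq:coordinate_ub} being equal to $\int_0^1 \frac{(1-t)^2}{2}\nabla^3 f(x+t\mS h)[\mS h]^3\,dt$.

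Next I would bound the integrand in absolute value. By the definition \eqref{eq:MS_def} of $M_{\mS}$, for every point $y\in\R^d$ (in particular $y=x+t\mS h$) and every nonzero direction of the form $\mS h$ we have $|\nabla^3 f(y)[\mS h]^3|\le M_{\mS}\|\mS h\|^3$; the case $h=0$ is trivial since both sides vanish. Hence
\[
\left|\int_0^1 \frac{(1-t)^2}{2}\nabla^3 f(x+t\mS h)[\mS h]^3\,dt\right| \le \frac{M_{\mS}\|\mS h\|^3}{2}\int_0^1 (1-t)^2\,dt = \frac{M_{\mS}}{6}\|\mS h\|^3,
\]
which is \eqref{eq:coordinate_ub}.

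Finally, to get \eqref{eq:coordinate_ub_full}, I would take the upper estimate from \eqref{eq:coordinate_ub}, namely $f(x^+)\le f(x)+\langle\nabla_{\mS}f(x),h\rangle + \tfrac12\langle\nabla^2_{\mS}f(x)h,h\rangle + \tfrac{M_{\mS}}{6}\|\mS h\|^3$, and add $\psi(x^+)=\psi(x+\mS h)$ to both sides; since $F(x^+)=f(x^+)+\psi(x^+)$, the right-hand side becomes exactly $f(x)+T_{\mS}(x,h)$. I do not expect a serious obstacle here: the only mild subtlety is making sure $\nabla^3 f$ exists and the Taylor remainder is valid, which is covered by Assumption~\ref{as:conv_lipschitz} ($f$ sufficiently smooth with Lipschitz Hessian, so $\nabla^3 f$ exists almost everywhere and the integral remainder form still applies — one can alternatively invoke the Lipschitz-Hessian mean-value form of the remainder to avoid assuming a classical third derivative). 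The remaining steps are routine one-variable calculus.
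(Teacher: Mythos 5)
Your proposal is correct and follows essentially the same route as the paper: both arguments express the second-order Taylor error of $t\mapsto f(x+t\mS h)$ via an integral remainder, bound the third directional derivative pointwise by $M_{\mS}\|\mS h\|^3$ using definition~\eqref{eq:MS_def}, and integrate the kernel to obtain the constant $\tfrac16$ (the paper writes the remainder as an iterated triple integral rather than your single-integral form with kernel $\tfrac{(1-t)^2}{2}$, but this is only a cosmetic difference). The passage to \eqref{eq:coordinate_ub_full} by adding $\psi(x+\mS h)$ to the upper estimate is likewise exactly what the paper intends.
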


We shall also note that for function $\psi$
we require \textit{separability} with respect to the sampling distribution
(see Definition~\ref{def:separable} and the corresponding
Assumption~\ref{as:separable} in Section~\ref{sec:setup}).

For better orientation throughout the paper, we provide a table of frequently used notation in the Appendix.

\section{Algorithm} \label{sec:Alg}

For a given $\mS$ and current iterate $x^k$, it is a natural idea to choose $h$ as a minimizer of the upper bound~\eqref{eq:coordinate_ub_full} in $h$ for $x=x^k$, and subsequently set $x^{k+1} = x^+$ via~\eqref{eq:update_general}.  Note that we are choosing $\mS$ randomly according to a fixed distribution $\cD$ (with a possibly random number of columns). We have just described SSCN---Stochastic Subspace Cubic Newton---formally stated as  Algorithm~\ref{alg:crcd}.
 
\begin{algorithm}[!h]
\begin{algorithmic}[1]
\STATE \textbf{Initialization:} $x^0$, distribution $\cD$ of random matrices with $d$ rows and full column rank
\FOR{$k =  0, 1, \dots$}
\STATE Sample $\mS$ from distribution $\cD$
\STATE $h^k =  \argmin_{h \in \R^{\tau(\mS)}}  T_{\mS}(x^k,h)$
\STATE Set $x^{k+1}=x^k + \mS h^k$ \label{eq:x_update_CRCD}
\ENDFOR
\end{algorithmic}
\caption{SSCN: Stochastic Subspace Cubic Newton}
\label{alg:crcd}
\end{algorithm}

\begin{remark}\label{rem:monotonic}
Inequality~\eqref{eq:coordinate_ub_full} becomes an equality with $h=0$. As a consequence, we must have $F(x^{k+1}) \leq F(x^k)$, and thus the sequence $\{ F(x^k) \}_{k \geq 0}$ is non-increasing.
\end{remark}

\subsection{Solving the subproblem \label{sec:solving}}
Algorithm~\ref{alg:crcd} requires $T_{\mS}$ to be minimized in $h$ each iteration. As this operation does not have a closed-form solution in general, it requires an optimization subroutine itself of a possibly non-trivial complexity, which we discuss here. 

\paragraph{The subproblem without $\psi$.}
Let us now consider the case when  $\psi(x) \equiv 0$ in which our problem~\eqref{eq:problem} does not contain any nondifferentiable components. Various techniques for minimizing regularized quadratic functions were developed  during the development of
Trust-region methods (see~\citep{conn2000trust}), and applied
to Cubic regularization in~\citep{nesterov2006cubic}. The classical approach consists in performing some diagonalization of the matrix $\nabla^2_{\mS} f(x)$ first, by computing the \textit{eigenvalue} or \textit{tridiagonal} decomposition, which costs $\cO(\tau(\mS)^3)$ arithmetical operations.
Then, to find the minimizer, it merely remains to solve a one-dimensional nonlinear equation (this part can be done by $\tilde{\cO}(1)$ iterations of the one-dimensional Newton method, with a linear cost per step). More details and analysis of this procedure can be found
in~\citep{gould2010solving}.

The next example gives a setting in which  an explicit formula for the minimizer of $T_{\mS}$ can be deduced.
\begin{example}\label{ex:explicit} 
Let $e_i$ be the $i$th unit basis vector in $\R^d$. If $\mS\in \{e_1, \dots, e_d\}$ with probability 1 and $\psi(x) = 0$, the update rule can be written as
$x^{k+1} = x^k - \alpha_i^k e_i,$ with
\[
\alpha_i^k = \frac{2\nabla_i f(x^{k})}{\nabla^2_i f(x^{k}) + \sqrt{\left(\nabla^2_{ii} f(x^{k})  \right)^2 + 2M_{e_i}|\nabla_i f(x^{k}) |} },
\]
thus the cost of solving the subproblem is $\cO(1)$.
\end{example}

\paragraph{Subproblem with simple $\psi$.} In some scenarios, minimization of $T_{\mS}$  can be done using a simple algorithm if $\psi$ is simple enough. We now give an example of this.

\begin{example} 
	If $\mS\in \{e_1, \dots, e_d\}$ with probability 1, the subproblem can be solved using a binary search given that the evaluation of $\psi$ is cheap. In particular, if we can evaluate $\psi(x^k+ \mS h)- \psi(x^k)$ in $\tilde{\cO}(1)$, the cost of solving the subproblem will be $\tilde{\cO}(1)$.
\end{example}

\paragraph{The subproblem with general $\psi$.} In the case of general regularizers, recent line of work by \citet{carmon2019gradient} explores to the use
of \textit{first-order} optimization methods (Gradient Methods) for
computing an approximate minimizer of $T_{\mS}$.
We note that the backbone of such Gradient Methods is an implementation of the following operation 
(for a any given vector $b \in \R^{\tau(\mS)}$, and positive scalars $\alpha, \beta$):
\[
\arg \min \limits_{h \in \R^{\tau(\mS)}} \la b, h \ra + \frac{\alpha}{2}\|\mS h\|^2 + \frac{\beta}{3}\|\mS h\|^3 + \psi(x^k + \mS h).
\]

To the best of our knowledge, the most efficient gradient method is the Fast Gradient Method (FGM) of~\citet{nesterov2019inexact}, achieving an $\cO(1/k^6)$ convergence rate. However, FGM can deal with any $\psi$ as long as the above subproblem is cheap to solve. We shall also note that gradient methods do not require a storage of $\nabla^2_{\mS} f(x)$; but rather iteratively access partial Hessian-vector products $\nabla^2_{\mS} f(x) h$.

\paragraph{Line search.}

Note that in Algorithm~\ref{alg:crcd} we use the Lipschitz constants $M_{\mS}$ 
of the subspace Hessian (see Definition~\eqref{eq:MS_def}) as the regularization parameters. In many application, $M_{\mS}$ can be estimated cheaply (see Section~\ref{sec:applications}). In general, however, $M_\mS$ might be unknown or hard to estimate. 
In such a case, one might use a simple one-dimensional search on each iteration: multiply the estimate of $M_\mS$ by the factor of two until the bound~\eqref{eq:coordinate_ub_full} is satisfied, and divide it by two at the start of each iteration. Note that the average number of such line search steps per iteration can be bounded by two (see~\citep{grapiglia2017regularized} for the details).

\subsection{Special cases} \label{sec:special}
There are several scenarios where SSCN becomes an already known algorithm.  We list them below:

\begin{itemize}
\item 
{\bf Quadratic minimization.}  If $M=0$ and $\psi = 0$, SSCN reduces to the stochastic dual subspace ascent (SDSA) method~\citep{SDA}, first analyzed in an equivalent primal form as a \emph{sketch-and-project} method in~\citep{gower2015randomized}. In such a case, SSCN performs both first-order, second-order updates, and exact minimization over a subspace at the same time due to the quadratic structure of the objective~\citep{richtarik2017stochastic}. The convergence rate we provide in Section~\ref{sec:local} exactly matches the rate of sketch-and-project as well. As a consequence, we recover a subclass of matrix inversion algorithms~\citep{gower2017randomized} together with stochastic spectral (coordinate) descent~\citep{kovalev2018stochastic} along with their convergence theory. 

\item 
{\bf Full-space method.} If $\mS=\mI^d$ with probability 1, SSCN reduces to cubically regularized Newton (CN)~\citep{griewank1981modification, nesterov2006cubic}. In this case, we recover both existing global convergence rates and superlinear local convergence rates.  

\item 
{\bf Separable non-quadratic part of $f$.} The RBCN method of~\citet{doikov2018randomized} aims to minimize~\eqref{eq:problem} with $$f(x) = g(x) + \phi(x),$$ where $g,\phi$ are both convex, and $\phi$ is separable.\footnote{Separability is defined in Section~\ref{sec:setup}.}
They assume that $$\nabla^2 g(x) \preceq \mA \in \R^{d\times d}, \qquad \forall x \in \R^d,$$ while $\phi$ has Lipschitz continuous Hessian. In each iteration, RBCN constructs an upper bound on the objective using first order information from $g$ only. This is unlike SSCN, which uses second order information from $g$. In a special case when $\nabla^2 g(x) = \mA$ for all $x$, SSCN and RBCN are identical algorithms. However, RBCN is less general: it requires separable $\phi$, and thus does not cover some of our applications, and takes directions along coordinates only. Further, the rates we provide are better even in the setting where the two methods coincide ($\nabla^2 g(x)=\mA$). The simplest way to see that is by looking at local convergence -- RBCN does not achieve the local convergence rate of block CD to minimize~\eqref{eq:quadratic_optimum}, which is the best one might hope for. 
\end{itemize}

Besides these particular cases, for a general twice-differentiable $f$, SSCN is a new second-order method.
 
\section{Related Literature} \label{sec:related_literature}

Several methods in the literature are related to SSCN. We briefly review them below.

\begin{itemize}
\item \emph{Cubic regularization of Newton method} was proposed first by~\citet{griewank1981modification},
and received substantial attention after the work of~\citet{nesterov2006cubic}, where its global complexity guarantees were established.
During the last decade, there was a steady increase of research in second-order methods, 
discovering Accelerated~\citep{nesterov2008accelerating,monteiro2013accelerated},
Adaptive~\citep{cartis2011adaptive1,cartis2011adaptive2}, and
Universal~\citep{grapiglia2017regularized,grapiglia2019accelerated,doikov2019minimizing} 
schemes (the latter ones are adjusting automatically 
to the smoothness properties of the objective).

\item There is a vast literature on \emph{first-order coordinate descent (CD)} methods. While CD with $\tau=1$ is consistently the same method within the literature~\citep{nesterov2012efficiency, richtarik2014iteration, wright2015coordinate}, there are several ways to deal with $\tau>1$. The first approach constructs a separable upper bound on the objective (in expectation) in the direction of a random subset of coordinates~\citep{qu2016coordinate1, qu2016coordinate2}, which is minimized to obtain the next iterate. The second approach---SDNA~\citep{qu2016sdna}---works with a tighter non-separable upper bound. SDNA is, therefore, more costly to implement but requires a smaller number of iterations to converge. The literature on first-order subspace descent algorithms is slightly less rich, the notable examples are random pursuit~\citep{stich2013optimization} or stochastic subspace descent~\citep{SSD:Kozak2019}.

\item \emph{Randomized subspace Newton} (RSN)~\citep{gower2019rsn} is a method of the form $$x^{k+1} = x^k - \frac{1}{\hat{L}} \mS \left(\nabla^2_\mS f(x^k)\right)^{-1} \nabla_{\mS} f(x^k)$$ for some specific fixed $\hat{L}$. In particular, it can be seen as a method minimizing the following upper bound on the function, which follows from their assumption:
\begin{align*}
 & h^k = \arg \min_h\, \langle \nabla_{\mS} f(x^k), h  \rangle + \frac{\hat{L}}{2}  \la \nabla^2_{\mS}  f(x^k) h, h \ra. 
\end{align*}
This is followed by an update over the subspace: $ x^{k+1} = x^k + \mS h^k$. 
Since both RSN and SSCN are analyzed under different assumptions, the global linear rates are not directly comparable. However, the local rate of SSCN is superior to RSN. We shall also note that RSN is a stochastic subspace version of a method from~\citep{karimireddy2018global}.

\item \emph{Subsampled Newton} (SN) methods~\citep{byrd2011use, erdogdu2015convergence, xu2016sub, roosta2019sub} and \emph{subsampled cubic regularized Newton methods}~\citep{kohler2017sub, xu2017newton, wang2018stochastic} and \emph{stochastic (cubic regularized) Newton methods}~\citep{tripuraneni2018stochastic,cartis2018global, kovalev2019stochastic} are  stochastic second-order algorithms to tackle finite sum minimization. Their major disadvantage is a requirement of an immense sample size, which makes them often impractical if used as theory prescribes. A notable exception that does not require a large sample size was recently proposed by~\citet{kovalev2019stochastic}. However, none of these methods are directly comparable to SSCN as they are not subspace descent methods, but rather randomize over data points (or sketch the Hessian from ``inside''~\citep{pilanci2017newton}). 

\end{itemize}

\section{Global Complexity Bounds}
\label{sec:global}

We first start presenting the global complexity results of SSCN.

\subsection{Setup~\label{sec:setup}}

Throughout this section, we require some kind of uniformity of the distribution $\cD$ over subspaces given by $\mS$. In particular, we require $$\mP^\mS\eqdef \mS \left(\mS^\top \mS \right)^{-1} \mS^\top,$$ the projection matrix onto the range of $\mS$, to be a scalar multiple of identity matrix in expectation. 
\begin{assumption}\label{as:uniform}
$\exists \tau>0$ such that distribution  $\cD$ satisfies
\begin{equation} \label{eq:uniform_sampling}
\E{\mP^{\mS}} = \frac{\tau}{d} \mI^d.
\end{equation}
\end{assumption}
A direct consequence of Assumption~\ref{as:uniform} is that $\tau$ is an expected width of $\mS$, as the next lemma states.
\begin{lemma}\label{lem:exp_size}
If Assumption~\ref{as:uniform} holds, then $\E{\tau(\mS)} = \tau$.
\end{lemma}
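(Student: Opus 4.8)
The claim is that $\E{\tau(\mS)} = \tau$ under Assumption~\ref{as:uniform}. The natural proof strategy uses the trace.

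\textbf{Plan.} The plan is to exploit the fact that the rank (equivalently, number of columns, since $\mS$ has full column rank with probability one) of $\mS$ equals the trace of the orthogonal projection matrix $\mP^\mS = \mS(\mS^\top\mS)^{-1}\mS^\top$ onto $\Range{\mS}$. First I would recall the standard linear-algebra identity: for any $\mS \in \R^{d\times\tau(\mS)}$ of full column rank, $\mP^\mS$ is the orthogonal projector onto a $\tau(\mS)$-dimensional subspace, hence $\traceOp(\mP^\mS) = \rank(\mP^\mS) = \tau(\mS)$. One quick way to see this: $\traceOp\bigl(\mS(\mS^\top\mS)^{-1}\mS^\top\bigr) = \traceOp\bigl((\mS^\top\mS)^{-1}\mS^\top\mS\bigr) = \traceOp(\mI^{\tau(\mS)}) = \tau(\mS)$, using cyclicity of the trace and invertibility of $\mS^\top\mS$ (which holds precisely because $\mS$ has full column rank).

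\textbf{Key steps.} Taking expectations of $\tau(\mS) = \traceOp(\mP^\mS)$ and using linearity of expectation together with linearity of the trace, I would write
\[
\E{\tau(\mS)} \;=\; \E{\traceOp(\mP^\mS)} \;=\; \traceOp\bigl(\E{\mP^\mS}\bigr).
\]
Then applying Assumption~\ref{as:uniform}, namely $\E{\mP^\mS} = \frac{\tau}{d}\mI^d$, gives
\[
\traceOp\bigl(\E{\mP^\mS}\bigr) \;=\; \traceOp\!\left(\frac{\tau}{d}\mI^d\right) \;=\; \frac{\tau}{d}\cdot d \;=\; \tau,
\]
which completes the argument.

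\textbf{Main obstacle.} This is a short argument and there is no serious obstacle; the only point requiring a modicum of care is the interchange of expectation and trace, which is justified because the trace is a finite linear combination of the matrix entries (it is the sum of the $d$ diagonal entries), so $\E{\traceOp(\mP^\mS)} = \sum_{i=1}^d \E{(\mP^\mS)_{ii}} = \traceOp(\E{\mP^\mS})$ provided the entries are integrable—which they are, since each diagonal entry of a projection matrix lies in $[0,1]$ and is therefore bounded. I would also note in passing that the full-column-rank assumption on $\mS$ (imposed with probability one in the setup) is what makes $\mS^\top\mS$ invertible and hence $\tau(\mS) = \traceOp(\mP^\mS)$ valid almost surely.
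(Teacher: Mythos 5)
Your proof is correct and follows essentially the same route as the paper: identify $\tau(\mS)$ with $\traceOp(\mP^\mS)$ via cyclicity of the trace, swap expectation and trace, and apply Assumption~\ref{as:uniform}. The extra remarks on integrability and full column rank are fine but not needed beyond what the paper already assumes.
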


As mentioned before, the global complexity results are interpolating between convergence rate of (first-order) CD and (global) convergence rate of Cubic Newton. However, first-order CD requires Lipschitzness of gradients, and thus we will require it as well.
\begin{assumption}\label{as:smooth}
Function $f$ has $L$-Lipschitz continuous gradients, i.e., $\nabla^2 f(x) \preceq L \mI^d$ for all $x\in \R^d$.
\end{assumption}

We will also need an extra assumption on $\psi$. It is well known that proximal (first-order) CD with fixed step size does not converge if $\psi$ is not separable -- in such case, even if $f(x^k) = f(x^*)$ we might have $f(x^{k+1})> f(x^*)$. Therefore, we might not hope that SSCN will converge without additional assumptions on $\psi$. Informally speaking, separability of $\psi$ with respect to directions given by columns of $\mS$ is required. 
To define it formally, let us introduce first the notion of a separable set.
\begin{definition}\label{def:separable_set}
Set $Q \subseteq \R^d$ is called $D$-separable, if $\forall x, y \in Q, \mS \in D$:
$
\ba{rcl}
\mP^\mS x + (\mI^d- \mP^\mS)y  \in  Q.
\ea
$
\end{definition}
 Using the set separability, we next define a separability of a function. 
\begin{definition}\label{def:separable}
Function $\phi: \R^d \rightarrow \R \cup \{ +\infty \}$ is $D$-separable if
$\dom \phi$ is $D$-separable, and there is map $\phi': \dom \phi \rightarrow \R^d$ such that
\begin{enumerate}
\item  $\forall x \in \dom\phi:\; \phi(x) = \langle \phi'(x), e \rangle$,\footnote{By $e\in \R^d$ we mean the vector of all ones.}
\item $\forall x,y \in \dom\phi, \mS\in D: \; \phi'(\mP^\mS x + (\mI^d- \mP^\mS)y) = \mP^\mS\phi'(x) + (\mI^d-\mP^\mS) \phi'(y)$.		
\end{enumerate}
\end{definition}

\begin{example}
If $D$ is a set of matrices whose columns are standard basis vectors, $D$-separability reduces to classical (coordinate-wise) separability.
\end{example}

\begin{example}
If $D$ is set of matrices which are column-wise submatrices of orthogonal $\mU$, $D$-separability of $\phi$ reduces to classical coordinate-wise separability of $\phi(\mU^\top x)$.
\end{example}

\begin{example}
$\phi(x) = \frac12\| x\|^2$ is $D$-separable for any~$D$.
\end{example}

\begin{assumption}\label{as:separable}
Function $\psi$ is $\Range{\cD}$-separable.
\end{assumption}

We are now ready to present the convergence rate of SSCN. 

\subsection{Theory}

First, let us introduce the critical lemma from which the main global complexity results are derived.  The next lemma states, what is the expected progress we have for one step of SSCN.

\begin{lemma}\label{lem:keylemma}
Let Assumptions~\ref{as:conv_lipschitz},~\ref{as:uniform},~\ref{as:smooth} and~\ref{as:separable} hold.
Then, for every $k \geq 0$ and $y \in \R^d$ we have
\begin{equation} \label{GlobalUpper}
  \E{F(x^{k + 1}) \, | \, x^k } 
 \leq 
\left(1 - \frac{\tau}{d} \right)F(x^k) + \frac{\tau}{d} F(y) 
   +  \frac{\tau}{d} \left( 
\frac{d - \tau}{d } \frac{L}{2}\|y - x^k\|^2 + \frac{M}{3}\|y - x^k\|^3
\right).
\end{equation}
\end{lemma}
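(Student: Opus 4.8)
The plan is to start from the oracle-consistent upper bound in Lemma~\ref{lem:ub}, take conditional expectation over $\mS \sim \cD$, and then exploit Assumption~\ref{as:uniform} to turn the subspace quantities into full-space quantities with a $\tau/d$ scaling. Concretely, since $h^k$ minimizes $T_{\mS}(x^k, \cdot)$, for \emph{any} competitor direction $g \in \R^{\tau(\mS)}$ we have $F(x^{k+1}) \le f(x^k) + T_{\mS}(x^k, h^k) \le f(x^k) + T_{\mS}(x^k, g)$. The key trick is to choose $g$ so that $\mS g$ equals the \emph{projection} of a fixed full-space step $y - x^k$ onto $\Range{\mS}$, i.e.\ $\mS g = \mP^\mS(y - x^k)$; such a $g$ exists and is unique because $\mS$ has full column rank. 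With this choice, $\langle \nabla_\mS f(x^k), g\rangle = \langle \nabla f(x^k), \mP^\mS(y-x^k)\rangle$ and $\langle \nabla^2_\mS f(x^k) g, g\rangle = \langle \nabla^2 f(x^k)\mP^\mS(y-x^k), \mP^\mS(y-x^k)\rangle$, and the cubic term becomes $\tfrac{M_\mS}{6}\|\mP^\mS(y-x^k)\|^3 \le \tfrac{M}{6}\|\mP^\mS(y-x^k)\|^3$ by Lemma~\ref{lem:sharpness}. The $\psi$ term becomes $\psi(x^k + \mP^\mS(y-x^k)) = \psi(\mP^\mS y + (\mI^d - \mP^\mS)x^k)$, which by $\Range{\cD}$-separability (Assumption~\ref{as:separable}) satisfies, in expectation, $\E{\psi(\mP^\mS y + (\mI^d-\mP^\mS)x^k)} = \tfrac{\tau}{d}\psi(y) + (1 - \tfrac{\tau}{d})\psi(x^k)$ — here Definition~\ref{def:separable} gives $\psi(\mP^\mS y + (\mI^d - \mP^\mS)x^k) = \langle \mP^\mS \psi'(y) + (\mI^d - \mP^\mS)\psi'(x^k), e\rangle$, and taking expectation and using $\E{\mP^\mS} = \tfrac{\tau}{d}\mI^d$ collapses it to the stated convex combination.

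Next I would take the conditional expectation of the whole bound over $\mS$. Using $\E{\mP^\mS} = \tfrac{\tau}{d}\mI^d$: the linear term gives $\E{\langle \nabla f(x^k), \mP^\mS(y-x^k)\rangle} = \tfrac{\tau}{d}\langle \nabla f(x^k), y - x^k\rangle$. The quadratic term needs a bound rather than an identity, since $\E{\mP^\mS A \mP^\mS}$ is not simply a multiple of $A$; here I would bound $\langle \nabla^2 f(x^k) \mP^\mS u, \mP^\mS u\rangle \le L \|\mP^\mS u\|^2$ using Assumption~\ref{as:smooth} (with $u = y - x^k$), so $\E{\tfrac12 \langle \nabla^2_\mS f(x^k) g, g\rangle} \le \tfrac{L}{2}\E{\|\mP^\mS u\|^2} = \tfrac{L}{2}\cdot\tfrac{\tau}{d}\|u\|^2$ because $\|\mP^\mS u\|^2 = \langle \mP^\mS u, u\rangle$ and $\E{\mP^\mS} = \tfrac{\tau}{d}\mI^d$. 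For the cubic term I would use $\|\mP^\mS u\| \le \|u\|$ (projections are contractions) to get $\E{\tfrac{M}{6}\|\mP^\mS u\|^3} \le \tfrac{M}{6}\E{\|\mP^\mS u\|^2}\cdot\|u\| = \tfrac{M}{6}\cdot\tfrac{\tau}{d}\|u\|^3$. Combining, and writing $f(x^k) + \tfrac{\tau}{d}\langle \nabla f(x^k), y - x^k\rangle = (1 - \tfrac{\tau}{d})f(x^k) + \tfrac{\tau}{d}(f(x^k) + \langle \nabla f(x^k), y - x^k\rangle)$, then using convexity $f(x^k) + \langle \nabla f(x^k), y - x^k\rangle \le f(y)$, assembles exactly into \eqref{GlobalUpper} once the $\psi$ contribution is folded in to replace $f$ by $F = f + \psi$ on both sides.

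The main obstacle I anticipate is handling the quadratic and cubic terms cleanly, because after the projection substitution one does not get clean identities for $\E{\mP^\mS \nabla^2 f(x^k)\mP^\mS}$ or $\E{\|\mP^\mS u\|^3}$; the resolution is precisely to \emph{not} try to evaluate these exactly but to first bound the integrand pointwise in $\mS$ — $\langle \nabla^2 f(x^k)\mP^\mS u, \mP^\mS u\rangle \le L\langle \mP^\mS u, u\rangle$ and $\|\mP^\mS u\|^3 \le \|u\|\langle \mP^\mS u, u\rangle$ — which linearizes everything in $\mP^\mS$ so that Assumption~\ref{as:uniform} applies directly. A secondary subtlety is verifying that the direction $g$ with $\mS g = \mP^\mS(y - x^k)$ is genuinely feasible for the argmin defining $h^k$ (it is, since $\mP^\mS(y-x^k) \in \Range{\mS}$ and $\mS$ has full column rank, so $g = (\mS^\top\mS)^{-1}\mS^\top(y-x^k)$ works) and that the separability bookkeeping for $\psi$ is applied to the point $\mP^\mS y + (\mI^d - \mP^\mS)x^k$, which requires $x^k, y \in \dom\psi$ — this holds for $x^k$ because the iterates stay in $\dom\psi$ (the bound \eqref{eq:coordinate_ub_full} is only meaningful there and $F(x^k)$ is finite by Remark~\ref{rem:monotonic}), and for $y$ the inequality \eqref{GlobalUpper} is trivially true when $F(y) = +\infty$, so we may assume $y \in \dom\psi = \dom F$ without loss of generality.
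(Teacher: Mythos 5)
Your overall architecture is the same as the paper's: compare against the competitor $g=(\mS^\top\mS)^{-1}\mS^\top(y-x^k)$ so that $\mS g=\mP^\mS(y-x^k)$, use the $D$-separability of $\psi$ to get $\E{\psi(\mP^\mS y+(\mI^d-\mP^\mS)x^k)}=\tfrac{\tau}{d}\psi(y)+(1-\tfrac{\tau}{d})\psi(x^k)$, handle the cubic term via $\|\mP^\mS u\|^3\le\|u\|\,\la \mP^\mS u,u\ra$, and finish with convexity. All of those steps are correct and match the paper.

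However, there is a genuine gap in your treatment of the quadratic term, and it is exactly the step where the lemma's key structure lives. You bound $\la\nabla^2 f(x^k)\mP^\mS u,\mP^\mS u\ra\le L\la\mP^\mS u,u\ra$ pointwise in $\mS$, which after taking expectation yields the coefficient $\tfrac{\tau}{d}\cdot\tfrac{L}{2}\|u\|^2$. The lemma claims the strictly smaller coefficient $\tfrac{\tau}{d}\cdot\tfrac{d-\tau}{d}\cdot\tfrac{L}{2}\|u\|^2$, so your inequality has a larger right-hand side and does not imply \eqref{GlobalUpper} (your smaller cubic coefficient $\tfrac{M}{6}$ versus $\tfrac{M}{3}$ does not compensate: for small $\|u\|$ your bound strictly exceeds the stated one). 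The factor $\tfrac{d-\tau}{d}$ is not cosmetic --- it is what makes the $L$-term vanish at $\tau=d$ so that Theorem~\ref{thm:global_weakly} degenerates to the pure cubic-Newton rate; with your version the leading term of the global rate would be $\tfrac{d}{\tau}\tfrac{LR^2}{k}$ instead of $\tfrac{d-\tau}{\tau}\tfrac{LR^2}{k}$ and the interpolation property is lost. The paper avoids this by \emph{not} discarding all the curvature: it writes
\begin{equation*}
\E{\mP^\mS \nabla^2 f(x^k)\mP^\mS}
=\frac{\tau^2}{d^2}\nabla^2 f(x^k)+\E{\Bigl(\mP^\mS-\tfrac{\tau}{d}\mI^d\Bigr)\nabla^2 f(x^k)\Bigl(\mP^\mS-\tfrac{\tau}{d}\mI^d\Bigr)}
\preceq \frac{\tau^2}{d^2}\nabla^2 f(x^k)+\frac{\tau(d-\tau)}{d^2}L\,\mI^d,
\end{equation*}
i.e.\ only the ``variance'' part is bounded by $L$ (producing exactly the $\tfrac{d-\tau}{d}$ factor), while the retained second-order term $\tfrac{\tau}{d}\cdot\tfrac12\la\nabla^2 f(x^k)u,u\ra$ is later absorbed into $f(y)$ through the Taylor upper bound $f(x^k)+\la\nabla f(x^k),u\ra+\tfrac12\la\nabla^2 f(x^k)u,u\ra\le f(y)+\tfrac{M}{6}\|u\|^3$ from \eqref{eq:coordinate_ub}; this is also where the extra $\tfrac{M}{6}$ that raises the cubic coefficient to $\tfrac{M}{3}$ comes from. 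Your proof needs this covariance decomposition (or an equivalent device) to recover the stated constants.
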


Now we are ready to present global complexity results for the general class of convex functions. The convergence rate is obtained by summing ~\eqref{GlobalUpper} over the different iterations $k$, and with a specific choice of $y$. 

\begin{theorem}\label{thm:global_weakly}
	Let Assumptions~\ref{as:conv_lipschitz},~\ref{as:uniform},~\ref{as:smooth} and~\ref{as:separable} hold. Denote
\begin{equation} \label{Rdef}
	R  \Def  \sup\limits_{x \in \R^d} \left \{  \|x - x^{*} \| \; : \; F(x) \leq F(x^{0})  \right\},
\end{equation}
	and suppose that $R < +\infty$.
	Then, for every $k \geq 1$ we have
\begin{equation}\label{GlobalConv}
  \E{ F(x^k) } - F^{*} \leq 
	\frac{d - \tau}{\tau} \cdot \frac{4.5 L R^2}{ k } + \left(\frac{d}{\tau}\right)^2 \cdot \frac{9 M R^3}{k^2}
	+ \frac{F(x^0) - F^{*}}{1 + \frac{1}{4}\left( \frac{\tau}{d} k \right)^{3}}.
\end{equation}
\end{theorem}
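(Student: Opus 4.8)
The plan is to derive \eqref{GlobalConv} from the one-step bound \eqref{GlobalUpper} by a standard telescoping/recursion argument, after making a judicious choice of the free point $y$. Write $\delta_k \eqdef \E{F(x^k)} - F^*$ and $\rho \eqdef \tfrac{\tau}{d}$. First I would take conditional expectations in \eqref{GlobalUpper}, apply the tower rule, and use the fact (Remark~\ref{rem:monotonic}) that $F(x^k) \le F(x^0)$ almost surely, so that $\|x^k - x^*\| \le R$. The natural choice is $y = x^k + \alpha (x^* - x^k)$ for a scalar $\alpha \in [0,1]$ to be optimized; by convexity of $F$ one has $F(y) \le (1-\alpha)F(x^k) + \alpha F^*$, while $\|y - x^k\| = \alpha \|x^* - x^k\| \le \alpha R$. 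Substituting into \eqref{GlobalUpper} gives, after taking full expectations,
\begin{equation*}
\delta_{k+1} \le (1 - \rho\alpha)\,\delta_k + \rho\left( \tfrac{(1-\rho)L R^2}{2}\alpha^2 + \tfrac{M R^3}{3}\alpha^3 \right).
\end{equation*}

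The core of the argument is then to turn this recursion into the claimed rate. I would split the analysis into two regimes. For the last term in \eqref{GlobalConv} — the one carrying the initial-gap factor $F(x^0)-F^*$ — set $\alpha = 1$, which yields $\delta_{k+1} \le (1-\rho)\delta_k + \rho\big(\tfrac{(1-\rho)LR^2}{2} + \tfrac{MR^3}{3}\big)$; unrolling this geometric recursion shows $\delta_k$ contracts at rate $(1-\rho)^k$ down to an $O(LR^2 + MR^3)$ floor, but more to the point one can extract the $\tfrac{F(x^0)-F^*}{1 + \frac14(\rho k)^3}$ behavior by combining the geometric decay on an initial stretch of iterations with the sublinear decay afterwards (this kind of ``$1/(1+ck^3)$'' envelope is exactly what one gets by interpolating a linear rate with a cubic one; cf. the analysis of cubic Newton). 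For the first two terms I would instead choose $\alpha$ small, roughly $\alpha \asymp 1/(\rho k)$ or balance $\alpha^2$ against $\alpha^3$ depending on which of the $L$ and $M$ terms dominates, and then telescope. Concretely, summing the recursion with $y$ chosen (for each $k$) so that $\|y-x^k\|$ is of order $R/k$ produces, after dividing by $k$, the $\tfrac{d-\tau}{\tau}\cdot\tfrac{LR^2}{k}$ and $(\tfrac{d}{\tau})^2\cdot\tfrac{MR^3}{k^2}$ contributions; the powers of $d/\tau = 1/\rho$ come from the $\rho$ multiplying the quadratic/cubic terms against the $\rho\alpha$ in the contraction factor.

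A cleaner route for the sublinear part, which I would actually try first, is the averaging trick: from $\delta_{k+1} \le (1-\rho\alpha)\delta_k + \rho\,(\text{stuff}(\alpha))$ one gets $\rho\alpha\,\delta_k \le \delta_k - \delta_{k+1} + \rho\,\text{stuff}(\alpha)$; summing $k=0,\dots,K-1$, using $\delta_k$ monotone (which follows since $F(x^k)$ is monotone), telescoping the differences, and optimizing $\alpha$ gives a bound on $\delta_K$ of the advertised shape. One has to be a little careful that $\delta_k$ is genuinely nonincreasing in $k$ — this is where Remark~\ref{rem:monotonic} is used again — and that $R<\infty$ so all the $\|y-x^k\|$ terms are controlled uniformly.

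The main obstacle I anticipate is not any single inequality but the bookkeeping needed to get all three terms simultaneously with the stated constants ($4.5$, $9$, $\tfrac14$): one essentially needs a single choice of $y$ (or a single family of choices indexed by $k$) that is good enough for the $L$-term, the $M$-term, and the initial-gap term at once, and then a somewhat delicate case analysis (small $k$ versus large $k$, and $L$-dominated versus $M$-dominated) to verify that the clean closed form \eqref{GlobalConv} dominates the messier expression that comes out of the recursion. I expect the cubic-regularization precedent of Nesterov--Polyak to supply the right template for the $1/(1+c(\rho k)^3)$ envelope, and the coordinate-descent precedent to supply the $1/k$ and $1/k^2$ envelopes; stitching the two together via Assumption~\ref{as:uniform} (which is what makes $\E{\mP^\mS} = \rho\mI$ and hence decomposes the one-step bound into a ``$\rho$-fraction of a full cubic-Newton step'') is the conceptual heart, and the constants are then a matter of not being wasteful.
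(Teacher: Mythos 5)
Your opening move is exactly the paper's: plug $y=\alpha_k x^{*}+(1-\alpha_k)x^k$ into \eqref{GlobalUpper}, use convexity of $F$ and the monotonicity of $F(x^k)$ (so $\|x^k-x^{*}\|\le R$), and obtain
$\delta_{k+1}\le(1-\tfrac{\tau}{d}\alpha_k)\delta_k+\tfrac{\tau}{d}\bigl(\tfrac{d-\tau}{d}\tfrac{LR^2}{2}\alpha_k^2+\tfrac{MR^3}{3}\alpha_k^3\bigr)$ with $\delta_k=\E{F(x^k)}-F^{*}$. The gap is that you never actually solve this recursion; the solution is the real content of the proof, and the devices you sketch for it do not close. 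The paper uses an estimating-sequence argument: set $a_k=k^2$, $A_k=A_0+\sum_{i=1}^k a_i$ with $A_0=\tfrac{4}{3}(d/\tau)^3$, and choose $\alpha_k=\tfrac{d}{\tau}\tfrac{a_{k+1}}{A_{k+1}}$, which is engineered so that $1-\tfrac{\tau}{d}\alpha_k=\tfrac{A_k}{A_{k+1}}$. Multiplying the recursion by $A_{k+1}$ then makes it telescope, $A_{k+1}\delta_{k+1}\le A_k\delta_k+\tfrac{d-\tau}{\tau}\tfrac{LR^2}{2}\tfrac{a_{k+1}^2}{A_{k+1}}+(\tfrac{d}{\tau})^2\tfrac{MR^3}{3}\tfrac{a_{k+1}^3}{A_{k+1}^2}$, and the elementary bounds $A_k\ge A_0+\tfrac{k^3}{3}$, $\sum_{i\le k}a_i^2/A_i\le 3k^2$, $\sum_{i\le k}a_i^3/A_i^2\le 9k$ deliver all three terms of \eqref{GlobalConv} simultaneously, with the constants $4.5$, $9$, $\tfrac14$ falling out of $3k^2/ (k^3/3)$, $9k/(k^3/3)$ and $A_0/A_k$. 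The choice of $A_0$ is also what guarantees $\alpha_k\le 1$ (needed for the convexity step and for $y$ to be admissible), and it is precisely this $A_0$ that produces the third, initial-gap term; your proposal does not address the constraint $\alpha_k\le1$, which is violated for small $k$ by $\alpha_k\asymp \tfrac{d}{\tau k}$ when $\tau\ll d$.

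Concretely, your fallback routes fail as stated. The averaging trick with a fixed $\alpha$ gives $\delta_K\le\tfrac{\delta_0}{\rho\alpha K}+\tfrac{(1-\rho)LR^2}{2}\alpha+\tfrac{MR^3}{3}\alpha^2$ (with $\rho=\tau/d$); optimizing $\alpha$ balances the first two terms at a value of order $\sqrt{LR^2\delta_0/(\rho K)}$, i.e.\ a $K^{-1/2}$ rate rather than the claimed $K^{-1}$, and a time-varying $\alpha_k\asymp 1/(\rho k)$ in the same scheme yields only a $1/\log K$ rate because $\sum\alpha_k$ grows logarithmically. The two-regime split ($\alpha=1$ early, $\alpha\asymp1/(\rho k)$ late) is not carried out and does not obviously recombine into the single closed form \eqref{GlobalConv}. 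So the missing idea is specific and identifiable: the quadratic weights $a_k=k^2$, the identity $1-\tfrac{\tau}{d}\alpha_k=A_k/A_{k+1}$ that converts the contraction into a telescoping sum, and the choice $A_0=\tfrac43(d/\tau)^3$ that enforces $\alpha_k\le1$ while generating the $\bigl(1+\tfrac14(\tfrac{\tau}{d}k)^3\bigr)^{-1}$ envelope.
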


Note that convergence rate of the minibatch version\footnote{Sampling $\tau$ coordinates at a time for objectives with $L$-Lipschitz gradients.} of first-order CD is $\cO\bigl( \frac{d}{\tau} \frac{LR^2}{k}\bigr)$. At the same time, (global) convergence rate of cubically regularized Newton method is $\cO\bigl( \frac{MR^3}{k^2}\bigr)$. Therefore, Theorem~\ref{thm:global_weakly} shows that the global rate of SSCN well interpolates between the two extremes,
depending on the sample size $\tau$ we choose.

\begin{remark}
According to estimate~\eqref{GlobalConv}, in order to have $\E{F(x^k)} - F^{*} \leq \varepsilon$,
it is enough to perform
$$
k = \cO\left(  \frac{d - \tau}{\tau} \frac{LR^2}{\varepsilon} + \frac{d}{\tau} \sqrt{\frac{MR^3}{\varepsilon}}
+ \frac{d}{\tau}\left( \frac{F(x^0) - F^{*}}{\varepsilon} \right)^{1/3}  \right)
$$
iterations of SSCN.
\end{remark}

Next, we move to the strongly convex case.  

\begin{assumption}\label{as:sc}
Function $f$ is $\mu$-strongly convex, i.e., $\nabla^2 f(x) \succeq \mu \mI^d$ for all $x\in \R^d$.
\end{assumption}

\begin{remark}
Strong convexity of the objective (assumed for Theorem~\ref{thm:global_strongly} later) implies:
$ R < +\infty$. Furthermore, due to monotonicity of the sequence $\{ F(x_k) \}_{k \geq 0}$ (see Remark~\ref{rem:monotonic}), we have
$\|x^k - x^{*}\| \leq R$ for all $k$. Therefore, it is sufficient to require Lipschitzness of gradients over the sublevel set, which holds with $L = \lambda_{\max}(\nabla^2 f(x^{*})) + MR$.
\end{remark}

As both extremes cubic regularized Newton (where $\mS = \mI^d$ always) and (first-order) CD ($\mS = e_i$ for randomly chosen $i$) enjoy (global) linear rate under strong convexity, linear convergence of SSCN is expected as well. At the same time, the leading complexity term should be in between the two extremes. Such a result is established as Theorem~\ref{thm:global_strongly}.

\begin{theorem}\label{thm:global_strongly}
Let Assumptions~\ref{as:conv_lipschitz},~\ref{as:uniform},~\ref{as:separable}
and~\ref{as:sc} hold. 
Then, 
$ \E{F(x^k)} - F^{*} \leq  \varepsilon$,
as long as the number of iterations of SSCN is
$$
 k =   \cO\left( \left(  \frac{d - \tau}{\tau} \frac{L}{\mu} + \frac{d}{\tau}\sqrt{\frac{M R}{\mu}} + \frac{d}{\tau}     
\right) 
 \log \left(\frac{F(x^0) - F^{*}}{\varepsilon}\right)  \right)
\,.
$$
\end{theorem}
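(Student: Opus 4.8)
The plan is to turn the one-step estimate of Lemma~\ref{lem:keylemma} into a linear contraction of $F(x^k)-F^*$ and then unroll it. Fix $k\ge 0$ and apply \eqref{GlobalUpper} with the interpolated point $y = (1-\alpha)x^k + \alpha x^*$, where $\alpha\in(0,1]$ is a free parameter to be optimized at the end. Subtracting $F^*$ from both sides, using convexity of $F$ in the form $F(y)\le (1-\alpha)F(x^k)+\alpha F^*$, and noting $\|y-x^k\| = \alpha\|x^k-x^*\|$, I get
\begin{align*}
\E{F(x^{k+1})\,|\,x^k} - F^*
&\le \Bigl(1 - \tfrac{\tau}{d}\alpha\Bigr)\bigl(F(x^k)-F^*\bigr) \\
&\quad + \tfrac{\tau}{d}\,\alpha^2\Bigl(\tfrac{d-\tau}{d}\tfrac{L}{2} + \tfrac{M}{3}\alpha\|x^k-x^*\|\Bigr)\|x^k-x^*\|^2 .
\end{align*}

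Next I would eliminate the Euclidean distances using strong convexity and the remark preceding the theorem. By $\mu$-strong convexity, $\|x^k-x^*\|^2 \le \tfrac{2}{\mu}\bigl(F(x^k)-F^*\bigr)$; and since $\{F(x^j)\}_{j\ge 0}$ is non-increasing along every trajectory (Remark~\ref{rem:monotonic}) and $R<\infty$ under strong convexity, we have $\|x^k-x^*\|\le R$ for all $k$. Substituting both into the bracket of the cubic-order term yields the contraction $\E{F(x^{k+1})\,|\,x^k}-F^* \le \rho(\alpha)\bigl(F(x^k)-F^*\bigr)$ with
\[
\rho(\alpha) = 1 - \tfrac{\tau}{d}\,\alpha\bigl(1 - \alpha\beta(\alpha)\bigr), \qquad
\beta(\alpha) = \tfrac{d-\tau}{d}\tfrac{L}{\mu} + \tfrac{2MR}{3\mu}\,\alpha .
\]
I would then take $\alpha = \min\bigl\{1,\ \tfrac{d\mu}{4(d-\tau)L},\ \sqrt{\tfrac{3\mu}{8MR}}\bigr\}$, which makes each of the two summands of $\alpha\beta(\alpha)$ at most $\tfrac14$, hence $\alpha\beta(\alpha)\le\tfrac12$ and $1-\rho(\alpha)\ge \tfrac{\tau}{2d}\alpha$. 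Since $1/\alpha \le 1 + \tfrac{4(d-\tau)L}{d\mu} + \sqrt{\tfrac{8MR}{3\mu}}$, this gives $\tfrac{1}{1-\rho(\alpha)} = \cO\bigl(\tfrac{d}{\tau} + \tfrac{d-\tau}{\tau}\tfrac{L}{\mu} + \tfrac{d}{\tau}\sqrt{MR/\mu}\bigr)$. Taking total expectations and iterating the contraction gives $\E{F(x^k)}-F^* \le \rho(\alpha)^k\bigl(F(x^0)-F^*\bigr)$, so $\E{F(x^k)}-F^*\le\varepsilon$ as soon as $k \ge \tfrac{1}{1-\rho(\alpha)}\log\tfrac{F(x^0)-F^*}{\varepsilon}$ (using $\log\tfrac1{\rho}\ge 1-\rho$), which is exactly the claimed iteration count.

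The bulk of this is routine bookkeeping; the one genuinely delicate point is the split in $\beta(\alpha)$ between the quadratic ($L$) and cubic ($M$) contributions: the budget $\alpha\beta(\alpha)\le\tfrac12$ must be divided across both so that neither the $\tfrac{L}{\mu}$ term nor the $\sqrt{MR/\mu}$ term is dropped, and this is precisely what forces the three-way minimum defining $\alpha$ and produces the three additive terms in the final bound. A secondary point to state carefully is that the \emph{deterministic} monotonicity from Remark~\ref{rem:monotonic} is what licenses the uniform bound $\|x^k-x^*\|\le R$ along every sample path, so that the one-step contraction can be chained over all $k$.
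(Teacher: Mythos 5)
Your proof is correct, but it takes a genuinely different route from the paper. The paper does not build a one-step contraction at all: it first proves the sublinear bound of Theorem~\ref{thm:global_weakly}, then uses strong convexity only once, in the form $R^2 \le \tfrac{2}{\mu}(F(x^0)-F^*)$, to rewrite that bound as $\E{F(x^k)}-F^* \le \bigl(\tfrac{d-\tau}{\tau}\tfrac{18L}{\mu k} + (\tfrac{d}{\tau})^2\tfrac{18MR}{\mu k^2} + (1+\tfrac14(\tfrac{\tau}{d}k)^3)^{-1}\bigr)(F(x^0)-F^*)$, and then picks $k$ so that each of the three terms is at most $\tfrac16$, yielding a halving of the residual every $\cO\bigl(\tfrac{d-\tau}{\tau}\tfrac{L}{\mu}+\tfrac{d}{\tau}\sqrt{MR/\mu}+\tfrac{d}{\tau}\bigr)$ iterations and hence the logarithmic factor by restarting. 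You instead go back to Lemma~\ref{lem:keylemma} directly, plug in $y=(1-\alpha)x^k+\alpha x^*$ with a fixed $\alpha$, use strong convexity at the \emph{current} iterate via $\|x^k-x^*\|^2\le\tfrac{2}{\mu}(F(x^k)-F^*)$ together with the path-wise bound $\|x^k-x^*\|\le R$, and tune $\alpha$ by the three-way minimum to get a genuine per-iteration geometric contraction $\rho(\alpha)\le 1-\tfrac{\tau}{2d}\alpha$. I checked your bookkeeping: the coefficient $1-\tfrac{\tau}{d}\alpha$, the form of $\beta(\alpha)$, the quarter-budget split making $\alpha\beta(\alpha)\le\tfrac12$, and the chaining via the tower property (licensed, as you say, by the deterministic monotonicity of Remark~\ref{rem:monotonic}) are all sound, and the resulting complexity matches the statement. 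Your argument is more self-contained and yields an explicit linear rate at every step; the paper's argument is shorter given that Theorem~\ref{thm:global_weakly} is already in hand, and only needs \eqref{eq:SConvex} at the initial point, which is why the paper can remark that the assumption can be relaxed to \eqref{eq:SConvex} alone --- a relaxation your route also supports, since you only ever use strong convexity through that same inequality.
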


Indeed, if $\mS =\mI^d$ with probability 1 and $MR\geq \mu$, the leading complexity term becomes $\sqrt{\frac{MR}{\mu}} \log\frac{1}{\varepsilon}$ which corresponds to the global complexity of cubically regularized Newton for
minimizing strongly convex functions~\citep{nesterov2006cubic}.
On the other side of the spectrum if $\mS = e_i$ with probability $\frac1d$, the leading complexity term becomes $\frac{dL}{\mu} \log\frac{1}{\varepsilon}$, which again corresponds to convergence rate of CD~\citep{nesterov2012efficiency}. 
Lastly, if $1<\tau<d$, the global linear rate interpolates the rates mentioned above.

\begin{remark}
Proof of Theorem~\ref{thm:global_strongly} only uses the following consequence of strong convexity:
\beq \label{eq:SConvex}
\frac{\mu}{2}\|x - x^{*}\|^2  \leq F(x) - F^{*}, \qquad x \in \R^d
\eeq
and thus the conditions of Theorem~\ref{thm:global_strongly} might be slightly relaxed.\footnote{However, this relaxation is not sufficient to obtain the local convergence results.} For detailed comparison of various relaxations of strong convexity, see~\citep{karimi2016linear}.
\end{remark}

\section{Local Convergence \label{sec:local}}

Throughout this section, assume that $\psi = 0$. We first present the key descent lemma, which will be used to obtain local rates. Let $$\mH_{\mS}(x) \eqdef  \nabla^2_{\mS} f(x) +\sqrt{ \frac{M_{\mS}}{2}} \| \nabla_{\mS} f(x)\|^{\frac12} \mI^{\tau(\mS)} .$$

\begin{lemma} \label{lem:decrease} We have 
\begin{equation}\label{eq:decrease}
f(x^k)-f(x^{k+1})  \geq   \frac12 \| \nabla_{\mS} f(x^k)\|^2_{  \mH^{-1}(x^k)  }.
\end{equation}
\end{lemma}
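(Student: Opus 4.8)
The plan is to exploit the fact that $h^k$ is the exact minimizer of the subspace cubic model $T_{\mS}(x^k, h)$ (with $\psi=0$), so $f(x^k)-f(x^{k+1}) \geq f(x^k) - (f(x^k) + T_{\mS}(x^k,h^k)) = -T_{\mS}(x^k,h^k)$ by the upper bound \eqref{eq:coordinate_ub_full}. Hence it suffices to lower bound $-\min_h T_{\mS}(x^k,h)$, i.e. to upper bound $\min_h T_{\mS}(x^k,h)$ by $-\frac12\|\nabla_{\mS}f(x^k)\|^2_{\mH_{\mS}^{-1}(x^k)}$. The natural way to do this is to plug in a convenient \emph{suboptimal} $h$ and evaluate the model there. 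Writing $g \eqdef \nabla_{\mS}f(x^k)$, $\mathbf{B}\eqdef \nabla^2_{\mS}f(x^k)$, and $r\eqdef \sqrt{M_{\mS}/2}\,\|g\|^{1/2}$, I would try the damped-Newton-type direction $h = -(\mathbf{B}+r\mI)^{-1}g = -\mH_{\mS}^{-1}(x^k) g$.

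The key steps, in order: (1) From Step~4 of Algorithm~\ref{alg:crcd} and Lemma~\ref{lem:ub} with $\psi=0$, deduce $f(x^k)-f(x^{k+1}) \geq -T_{\mS}(x^k,h^k) \geq -T_{\mS}(x^k,h)$ for \emph{any} $h$. (2) Bound the cubic term: for the candidate $h$, use $\|\mS h\| \leq \sqrt{M_{\mS}/2}^{\,-1}$-type control — more precisely, since $\mathbf{B}\succeq 0$ we have $\mH_{\mS}(x^k) \succeq r\mI$, so $\|h\| = \|\mH_{\mS}^{-1}(x^k)g\| \leq \|g\|/r$, and also $\|\mS h\|^2 = h^\top \mS^\top\mS h$; here I need to relate $\|\mS h\|$ to the $\mH_{\mS}^{-1}$-norm of $g$. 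The cleanest route is to use $\frac{M_{\mS}}{6}\|\mS h\|^3 \leq \frac{M_{\mS}}{6}\cdot \frac{\|g\|}{r}\cdot \|\mS h\|^2 = \frac{1}{3}\sqrt{\frac{M_{\mS}}{2}}\|g\|^{1/2}\|\mS h\|^2 \cdot \frac{\|g\|^{1/2}}{\|g\|^{1/2}}$ — actually, more directly, bound $\frac{M_{\mS}}{6}\|\mS h\|^3 \le \frac12 r\, \|\mS h\|^2$ using $\|\mS h\| \le \frac{3r}{M_{\mS}} = \frac{3}{2}\sqrt{\frac{2}{M_{\mS}}}\|g\|^{1/2}$; one checks this bound via $\|\mS h\|^2\le \|\mS\|^2\|h\|^2$ is too lossy, so instead I would observe $\|\mS h\|^2 = g^\top\mH_{\mS}^{-1}\mS^\top\mS\mH_{\mS}^{-1}g$. (3) Combine: $T_{\mS}(x^k,h) = \langle g,h\rangle + \frac12\langle \mathbf{B}h,h\rangle + \frac{M_{\mS}}{6}\|\mS h\|^3 \leq \langle g,h\rangle + \frac12\langle \mathbf{B}h,h\rangle + \frac12 r\|\mS h\|^2 \le \langle g,h\rangle + \frac12\langle(\mathbf{B}+r\mI^{\tau(\mS)})h,h\rangle$, where in the last step I use $\|\mS h\|^2 \le \|h\|^2$ only if $\mS^\top\mS\preceq \mI$, which need not hold — so instead I should work throughout with the $\mathbf{B}+r\mS^\top\mS$ weighting, \emph{or} note that the definition of $\mH_{\mS}$ uses $\mI^{\tau(\mS)}$ and $\|\mS h\|^3$ appears, suggesting the intended candidate is $h=-\mH_{\mS}^{-1}(x^k)g$ with the cubic term absorbed via $\|\mS h\|\le$ (something)$\cdot\|h\|_{\mH_{\mS}}$. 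Minimizing $\langle g,h\rangle + \frac12\langle \mH_{\mS}(x^k)h,h\rangle$ over $h$ gives exactly $-\frac12\|g\|^2_{\mH_{\mS}^{-1}(x^k)}$, which is the claim.

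The main obstacle is step (2)/(3): reconciling the $\|\mS h\|^3$ cubic term (which lives in the ambient $\R^d$ geometry) with the $\mI^{\tau(\mS)}$ regularizer appearing in $\mH_{\mS}$. The resolution is presumably that by Lemma~\ref{lem:sharpness}-type reasoning one only needs $\|\mS h\|$ controlled through $M_{\mS}$, and the chain $\frac{M_{\mS}}{6}\|\mS h\|^3 \leq \frac12\sqrt{\frac{M_{\mS}}{2}}\|\nabla_{\mS}f(x^k)\|^{1/2}\|\mS h\|^2$ holds precisely when $\|\mS h\|\le \frac{3}{2}\sqrt{\frac{2}{M_{\mS}}}\|\nabla_{\mS}f(x^k)\|^{1/2}$, and this length bound must follow from optimality of $h^k$ (not the candidate) — i.e., the correct argument likely bounds $\|\mS h^k\|$ first from the stationarity condition $\nabla_h T_{\mS}(x^k,h^k)=0$, namely $g + \mathbf{B}h^k + \frac{M_{\mS}}{2}\|\mS h^k\|\,\mS^\top\mS h^k = 0$, giving $\mathbf{B} + \frac{M_{\mS}}{2}\|\mS h^k\|\mS^\top\mS \succeq 0$ automatically and a relation between $\|\mS h^k\|$ and $\|g\|$. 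I would therefore pivot to analyzing $h^k$ directly via its optimality conditions rather than a surrogate, then derive $-T_{\mS}(x^k,h^k)\ge \frac12\|g\|^2_{\mH_{\mS}^{-1}(x^k)}$ by a convexity/Fenchel argument, which is the standard cubic-Newton descent estimate adapted to the subspace setting.
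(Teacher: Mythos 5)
Your final pivot --- analyze the exact minimizer $h^k$ through its stationarity condition rather than a surrogate point --- is precisely the route the paper takes, but as written your proposal stops at the point where the actual work begins. The paper's argument runs as follows: the stationarity condition $\nabla_{\mS} f(x^k) + \nabla^2_{\mS} f(x^k) h^k + \frac{M_{\mS}}{2}\|\mS h^k\|\, h^k = 0$ yields two things. First, taking norms and using $\nabla^2_{\mS} f(x^k)\succeq 0$ gives the length bound $\|\nabla_{\mS} f(x^k)\|^{1/2}\geq \sqrt{M_{\mS}/2}\,\|\mS h^k\|$, which is exactly what lets one pass from the $\|\mS h^k\|$-dependent regularizer to the $\|\nabla_{\mS} f(x^k)\|^{1/2}$-dependent one in $\mH_{\mS}$. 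Second, taking the inner product with $h^k$ and substituting into $-T_{\mS}(x^k,h^k)$ gives
$f(x^k)-f(x^{k+1})\geq -T_{\mS}(x^k,h^k)=\frac12\la \nabla^2_{\mS} f(x^k)h^k,h^k\ra+\frac{M_{\mS}}{3}\|\mS h^k\|^3\geq \frac12\|h^k\|^2_{\mB_k}$ with $\mB_k\eqdef \nabla^2_{\mS} f(x^k)+\frac{M_{\mS}}{2}\|\mS h^k\|\mI^{\tau(\mS)}$; using stationarity once more, $\frac12\|h^k\|^2_{\mB_k}=\frac12\|\nabla_{\mS} f(x^k)\|^2_{\mB_k^{-1}}\geq \frac12\|\nabla_{\mS} f(x^k)\|^2_{\mH_{\mS}^{-1}(x^k)}$ by the length bound and monotonicity of the matrix inverse. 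You name both ingredients (``a relation between $\|\mS h^k\|$ and $\|g\|$'' and a ``convexity/Fenchel argument'') but verify neither, so the proposal is a plan rather than a proof; the gap is the absence of this chain of inequalities.

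Two further remarks. The surrogate-point approach you spend most of the proposal on actually does close: with $h=-\mH_{\mS}^{-1}(x^k)g$ one has $\|h\|\leq \|g\|/r=2r/M_{\mS}$ where $r\eqdef\sqrt{M_{\mS}/2}\|g\|^{1/2}$, hence $\frac{M_{\mS}}{6}\|h\|^3\leq \frac{r}{3}\|h\|^2\leq\frac{r}{2}\|h\|^2$ and $T_{\mS}(x^k,h)\leq \la g,h\ra+\frac12\la \mH_{\mS}(x^k)h,h\ra=-\frac12\|g\|^2_{\mH_{\mS}^{-1}(x^k)}$ --- \emph{provided} one may replace $\|\mS h\|$ by $\|h\|$. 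The obstruction you correctly flag (the cubic term lives in the $\|\mS h\|$ geometry while $\mH_{\mS}$ carries $\mI^{\tau(\mS)}$) is not specific to your surrogate route: the paper's own proof silently identifies $\|\mS h\|$ with $\|h\|$ and $\mS^\top\mS$ with $\mI^{\tau(\mS)}$ in the stationarity condition, i.e., it implicitly normalizes $\mS$ to have orthonormal columns. Under that normalization your surrogate argument is a valid and arguably cleaner alternative; without it, neither argument (nor the lemma as stated, with $\mI^{\tau(\mS)}$ in the definition of $\mH_{\mS}$) goes through verbatim. So the correct diagnosis is not that your first approach fails where the paper's succeeds, but that you abandoned it for a reason that applies equally to the intended proof, and then did not carry the second approach to completion.
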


Before stating the convergence theorem, it will be suitable to define the stochastic condition number of  $\mH_*\eqdef \nabla^2 f(x^*)$:
\begin{equation}\label{eq:sc_generalized}
\zeta  \eqdef \lambda_{\min} \left( \mH_*^{\frac12}  \E{{\mS} \left(\mS^\top \mH_* \mS \right)^{-1}{\mS}^\top}  \mH_*^{\frac12} \right),
\end{equation}
as it will drive the local convergence rate of SSCN.

\begin{theorem}[Local Convergence]\label{thm:local}
Let Assumptions~\ref{as:conv_lipschitz},~\ref{as:sc} hold, and suppose that $\psi = 0$. 
 For any $\varepsilon>0$ there exists $\delta>0$ such that if $F(x^0) - F^* \leq \delta$, we have
\begin{equation}\label{eq:local_rate}
\E{F(x^k) - F^*} \leq \left( 1- \left(1-\varepsilon \right)\zeta  \right)^k\left( F(x^0) - F^*\right)
\end{equation}
and therefore the local complexity of SSCN is $$\cO\left( \zeta^{-1} \log\frac1\varepsilon\right).$$ If further $M=0$ (i.e., $f$ is quadratic), then $\varepsilon =0$ and $\delta = \infty $, and thus the rate is global.

\end{theorem}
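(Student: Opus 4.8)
The plan is to argue that, once $F(x^0)-F^*$ is small enough, every iterate stays in a small neighborhood of $x^*$ on which $\nabla^2 f$ is nearly constant and equal to $\mH_*$, and then to reduce the per-step analysis of Lemma~\ref{lem:decrease} to the known exact-subspace-search rate for the quadratic $\tfrac12\|x-x^*\|^2_{\mH_*}$ whose contraction factor is exactly $\zeta$ from~\eqref{eq:sc_generalized}. Concretely, I would proceed as follows.

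First, localize. By $\mu$-strong convexity and Assumption~\ref{as:conv_lipschitz} we have $\tfrac{\mu}{2}\|x-x^*\|^2 \leq F(x)-F^* \leq \tfrac{L}{2}\|x-x^*\|^2$ with $L = \lambda_{\max}(\mH_*)+MR$, and by Remark~\ref{rem:monotonic} the sequence $\{F(x^k)\}$ is non-increasing; hence $F(x^0)-F^*\leq\delta$ forces $\|x^k-x^*\|\leq\sqrt{2\delta/\mu}=:r(\delta)$ for all $k$. On the ball of radius $r(\delta)$, $M$-Lipschitzness of the Hessian gives $\|\nabla^2 f(x^k)-\mH_*\|\leq M r(\delta)$, and likewise $\|\nabla_\mS f(x^k)\| = \|\mS^\top(\nabla f(x^k)-\nabla f(x^*))\|$ is controlled by $\|\mS\|\cdot L\,r(\delta)$. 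So as $\delta\to0$, both the Hessian perturbation and the extra regularizer $\sqrt{M_\mS/2}\,\|\nabla_\mS f(x^k)\|^{1/2}\mI$ appearing in $\mH_\mS(x^k)$ vanish relative to $\mH_*$; pushing this through, for every $\varepsilon'>0$ one can pick $\delta$ so that $\mH_\mS(x^k) \preceq (1+\varepsilon')\,\mS^\top\mH_*\mS$ uniformly in $\mS\in\cD$ and in $k$, whence $\mH_\mS^{-1}(x^k)\succeq (1+\varepsilon')^{-1}(\mS^\top\mH_*\mS)^{-1}$.

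Second, convert Lemma~\ref{lem:decrease} into a contraction. Write $g^k=\nabla f(x^k)=\mH_*(x^k-x^*)+O(M\,r(\delta)\|x^k-x^*\|)$. Using the lower bound on $\mH_\mS^{-1}(x^k)$ from the previous step,
\begin{equation*}
f(x^k)-f(x^{k+1}) \;\geq\; \frac{1}{2(1+\varepsilon')}\,\big\langle (\mS^\top\mH_*\mS)^{-1}\mS^\top g^k,\ \mS^\top g^k\big\rangle.
\end{equation*}
Taking conditional expectation over $\mS\sim\cD$ and substituting $g^k\approx\mH_*(x^k-x^*)$ gives, up to a factor $(1+O(\varepsilon'))$, the quantity $\tfrac12(x^k-x^*)^\top\mH_*\,\E{\mS(\mS^\top\mH_*\mS)^{-1}\mS^\top}\,\mH_*(x^k-x^*)$. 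By the definition of $\zeta$ in~\eqref{eq:sc_generalized} this is at least $\zeta\cdot\tfrac12\|x^k-x^*\|^2_{\mH_*}$, and since $F(x^k)-F^* = \tfrac12\|x^k-x^*\|^2_{\mH_*}\,(1+O(M\,r(\delta)/\mu))$ by a Taylor expansion of $f$ around $x^*$, we obtain $\E{F(x^{k+1})-F^*\mid x^k}\leq(1-(1-\varepsilon)\zeta)(F(x^k)-F^*)$ after absorbing all the $\varepsilon'$, $r(\delta)$ errors into a single $\varepsilon$ and choosing $\delta$ accordingly. Iterating the tower property yields~\eqref{eq:local_rate}, and the complexity $\cO(\zeta^{-1}\log\tfrac1\varepsilon)$ is immediate. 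When $M=0$ all perturbation terms are identically zero, $f$ equals its own second-order model, $r(\delta)$ plays no role, so $\varepsilon=0$ and $\delta=\infty$.

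The main obstacle is the \emph{uniform-in-$\mS$} control in the first step: the distribution $\cD$ may have unbounded support (e.g.\ Gaussian sketches), so $\|\mS\|$ and hence the regularizer $\sqrt{M_\mS/2}\|\nabla_\mS f(x^k)\|^{1/2}$ need not be bounded pointwise, and one cannot simply say $\mH_\mS(x^k)\preceq(1+\varepsilon')\mS^\top\mH_*\mS$ almost surely. The clean fix is to note that the bound we actually need is only in expectation: it suffices to show $\E{\mS(\mS^\top\mH_*\mS+E_\mS)^{-1}\mS^\top}\succeq(1-\varepsilon)\,\E{\mS(\mS^\top\mH_*\mS)^{-1}\mS^\top}$ where $E_\mS\succeq0$ is the small regularizer, which follows from operator monotonicity of $A\mapsto -A^{-1}$ together with a dominated-convergence argument as $\delta\to0$ (the integrand is bounded by the $E_\mS=0$ integrand, which is integrable by Assumption~\ref{as:uniform} applied at $\mH_*$). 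The second delicate point is handling the gradient error $g^k-\mH_*(x^k-x^*)$ inside the quadratic form without the cross terms destroying the contraction; this is standard and handled by a Cauchy–Schwarz / Young's inequality split, again paying only a $(1+O(\varepsilon'))$ factor.
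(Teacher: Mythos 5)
Your proposal is correct in substance and follows the same overall architecture as the paper's proof: localize via monotonicity of $\{F(x^k)\}$ plus strong convexity, show that both the Hessian perturbation $\nabla^2_{\mS}f(x^k)-\mS^\top\mH_*\mS$ and the cubic regularizer $\sqrt{M_{\mS}/2}\,\|\nabla_{\mS}f(x^k)\|^{1/2}\mI$ become negligible relative to $\mS^\top\mH_*\mS$, feed this into Lemma~\ref{lem:decrease}, take expectation, and invoke the definition of $\zeta$ in~\eqref{eq:sc_generalized}. Where you genuinely diverge is in the last link of the chain, namely how the expected per-step decrease $\tfrac{\zeta}{2}\,\nabla f(x^k)^\top\mH_*^{-1}\nabla f(x^k)$ (up to $(1+O(\varepsilon'))$ factors) gets converted into a multiple of $F(x^k)-F^*$. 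The paper goes through self-concordance: it shows $f$ is $M/\mu^{3/2}$ self-concordant, bounds $f(x^k)-f(x^*)\le\tfrac12(1+\gamma)\lambda_f(x^k)^2$ via the $\omega_*$ machinery, and relates $\nabla f^\top\mH_*^{-1}\nabla f$ to the Newton decrement $\lambda_f(x^k)^2$ by the Hessian-stability bound; no expansion of $\nabla f(x^k)$ in terms of $x^k-x^*$ is needed and no cross terms arise. You instead Taylor-expand both the gradient ($\nabla f(x^k)=\mH_*(x^k-x^*)+O(M\|x^k-x^*\|^2)$) and the function value ($F(x^k)-F^*=\tfrac12\|x^k-x^*\|^2_{\mH_*}(1+O(M\|x^k-x^*\|/\mu))$) around $x^*$ and absorb the cross terms by Young's inequality. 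This is more elementary (it avoids importing self-concordance theory) at the cost of the extra Cauchy--Schwarz bookkeeping you acknowledge; the error terms are all of relative order $Mr(\delta)/\mu$ and vanish as $\delta\to0$, so the argument closes. One remark on your stated obstacle: the need for \emph{uniform-in-$\mS$} control of $M_{\mS}$, $\|\mS\|$ and $\lambda_{\min}(\mS^\top\mH_*\mS)$ over the support of $\cD$ is present in the paper's proof as well (its explicit $\delta$ is a minimum of $\mS$-dependent quantities, implicitly assuming these are uniformly bounded, e.g.\ for finite-support distributions), so your dominated-convergence fix for unbounded sketch distributions is a refinement rather than a repair of a gap specific to your route.
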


The proof of Theorem~\ref{thm:local} along with the exact formulas for $\varepsilon,\delta$ can be found in Section~\ref{sec:local_proofs} of the Appendix. Theorem~\ref{thm:local} provides a local linear convergence rate of SSCN. While one might expect a superlinear rate to be achievable, this is not the case, and we argue that the rate from Theorem~\ref{thm:local} is the best one can hope for. 

In particular, if $M = 0$, Algorithm~\ref{alg:crcd} becomes subspace descent for minimizing positive definite quadratic which is a specific instance of sketch-and-project~\citep{gower2015randomized}. However, sketch-and-project only converges linearly -- the iteration complexity of sketch-and-project to minimize $(x-x^*)^\top \mA (x-x^*)$ with $\mA\succ 0$ is 
\[ \cO\left(  \left[\lambda_{\min} \left( \mA^{\frac12}  \E{{\mS}\left( \mS^\top \mA \mS \right)^{-1}{\mS}^\top}  \mA^{\frac12} \right) \right]^{-1} \log\frac1\varepsilon\right). \]
 Notice that this rate is matched by Theorem~\ref{thm:local} in this case.

Next, we compare the local rate of SSCN to the rate of SDNA~\citep{qu2016sdna}. To best of our knowledge, SDNA requires the least oracle calls to minimize $f$ among all first-order non-accelerated methods.

\begin{remark}
SDNA is a first-order analogue to Algorithm~\ref{alg:crcd} with $\mS=\mI^d_{(:,S)}$. In particular, given matrix $\mL$ such that $\mL\succeq \nabla^2 f(x) \succ 0 $ for all $x$, the update rule of SDNA is $$x^+ = x - \mS \left( \mS^\top \mL \mS \right)^{-1} \nabla_{\mS} f(x),$$ where $\mS = \mI^d_{(:,S)}$ for a random subset of columns $S$.  SDNA enjoys linear convergence rate with leading complexity term $\left(\mu \lambda_{\min} \left(\E{\mS(\mS^\top \mL \mS)^{-1}\mS^\top} \right)\right)^{-1}$. The leading complexity term of SSCN is $\zeta^{-1}$, and we can bound
\begin{eqnarray*}
\zeta &\geq & \lambda_{\min}\left( \mH_* \right) \lambda_{\min} \left(\E{{\mS} \left(\mS^\top \mH_* \mS\right)^{-1}{\mS}^\top}   \right)
\geq  \mu \lambda_{\min} \left(\E{\mS \left( \mS^\top \mL \mS \right)^{-1}\mS^\top}\right).
\end{eqnarray*}
Hence, the local rate of SSCN is no worse than the  rate of SDNA. Furthermore, both of the above inequalities might be very loose in some cases (i.e., there are examples where $\frac{\zeta}{ \mu \lambda_{\min}\E{\mS(\mL_\mS)^{-1}\mS^\top} }$ can be arbitrarily high). Therefore, local convergence rate of SSCN might be arbitrarily better than the convergence rate of SDNA. As a consequence, the local convergence of SSCN is better than convergence rate of any non-accelerated first order method.\footnote{The rate of SSCN and rate of accelerated subspace descent methods are not directly comparable -- while the (local) rate of SSCN might be better than rate of ACD, the reverse might happen as well. However, both ACD and SSCN are faster than non-accelerated subspace descent.}.
\end{remark}

Lastly, the local convergence rate provided by Theorem~\ref{thm:local} recovers the superlinear rate of cubic regularized Newton's method, as the next remark states.

 \begin{remark}
 If $\mS = \mI^d$ with probability 1, Algorithm~\ref{alg:crcd} becomes cubic regularized Newton method~\citep{griewank1981modification, nesterov2006cubic}. For $\mH_*\eqdef \nabla^2 f(x^*)$ we have 
 \begin{align*}
 \zeta = \lambda_{\min} \left( \mH_*^{\frac12} \mH_*^{-1} \mH_*^{\frac12} \right) =  \lambda_{\min} (\mI^d) = 1.
 \end{align*}
As a consequence of Theorem~\ref{thm:local}, for any $\varepsilon>0$ there exists $\delta>0$ such that if $F(x)-F(x^*)\leq \delta$, we have
 \[
 F(x^+)- F(x^*)\leq \varepsilon(F(x)-F(x^*)).
 \]
 Therefore, we obtain a superlinear convergence rate. 
 \end{remark}

\section{Applications}
\label{sec:applications}

\subsection{Linear Models\label{sec:linear}}
Consider only $\mS = \mI^d_{(:,S)}$ for simplicity.
 Let 
 \begin{equation} \label{eq:linear_model}
 F(x) \eqdef \frac1n \sum \limits_{i=1}^n \phi_i( \la a_i, x \ra )+ \psi(x),
 \end{equation}
 and $f(x) \eqdef \frac1n \sum_{i=1}^n \phi_i( \la a_i, x \ra )$ 
  and suppose that $|\nabla^3 \phi_i (y)| \leq c$. Then clearly,  $$\nabla^3 f(x)[h]^3 =\frac1n \sum_{i=1}^n \nabla^3\phi_i( \la a_i, x\ra ) \la a_i, h \ra^3 $$ for any $h \in \R^d$. While evaluating $E\eqdef \max_{\|h\|=1, x} \nabla^3 f(x)[h]^3 $ is infeasible, we might bound it instead via
\begin{align}
 E &\leq  \max \limits_{\|h\|=1}  \frac{c}{n}\sum \limits_{i=1}^n| \la a_i, h \ra |^3  \leq    \frac{c}{n}\sum \limits_{i=1}^n\|a_i\|^3, \label{eq:ndanjdajn}
\end{align}
which means that $M=\frac{c}{n} \sum_{i=1}^n \| a_i\|^3$ is a feasible choice. On the other hand, for $S = \{ j \}$ we have
 \[
 \max \limits_{\|h_j\|=1, x} \nabla^3 f(x)[h_j]^3 = \max \limits_{x} \nabla^3 f(x)[e_j]^3 \leq \frac{c}{n} \sum \limits_{i=1}^n|a_{ij}|^3 
 \]
 and thus we might set $M_j =\frac{c}{n} \sum_{i=1}^n | a_{ij}|^3$. The next lemma compares the above choices of $M$ and $M_j$.

\begin{lemma}
We have $M\geq \max_j M_j$. At the same time, there exist vectors $\{a_i\}$ that $$\max_j M_j  = \frac{M}{d^{\frac32}}.$$
\end{lemma}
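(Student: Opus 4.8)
The claim has two parts, mirroring Lemma~\ref{lem:sharpness} but in the concrete setting of linear models. The plan is to prove the inequality $M \geq \max_j M_j$ first, and then exhibit a family of vectors $\{a_i\}$ achieving the extreme ratio $d^{-3/2}$.

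For the inequality, recall that $M = \frac{c}{n}\sum_{i=1}^n \|a_i\|^3$ is the chosen global constant and $M_j = \frac{c}{n}\sum_{i=1}^n |a_{ij}|^3$. Since $|a_{ij}| \leq \|a_i\|$ for every coordinate $j$ (the absolute value of one entry is bounded by the Euclidean norm of the whole vector), we get $|a_{ij}|^3 \leq \|a_i\|^3$ termwise, and summing over $i$ and multiplying by $\frac{c}{n}$ yields $M_j \leq M$ for every $j$, hence $\max_j M_j \leq M$. This is essentially immediate; no subtlety here.

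For the second part I would construct vectors so that each $a_i$ is supported on a single coordinate but the support varies with $i$ in a balanced way; the natural guess is $a_i = e_{\sigma(i)}$ for an assignment $\sigma$ that distributes the $n$ indices as evenly as possible among the $d$ coordinates — most cleanly, take $n = d$ and $a_i = e_i$, or more generally $n$ a multiple of $d$ with each basis vector used $n/d$ times. Then $\|a_i\| = 1$, so $M = \frac{c}{n}\sum_{i=1}^n 1 = c$. On the other hand $|a_{ij}| = 1$ if $\sigma(i) = j$ and $0$ otherwise, so $M_j = \frac{c}{n}\cdot\#\{i : \sigma(i)=j\} = \frac{c}{d}$ for every $j$, giving $\max_j M_j = \frac{c}{d} = \frac{M}{d}$. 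This gives ratio $d^{-1}$, not $d^{-3/2}$ — so plain basis vectors are not enough, and the genuinely nontrivial point is recovering the extra factor $d^{-1/2}$.

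\textbf{The main obstacle} is precisely this gap: to reach $d^{-3/2}$ one needs the cube of the $\ell_2$-norm to beat the sum of cubes of coordinates by the full factor $d^{3/2}$, which forces each $a_i$ to be (a scaling of) the all-ones vector — the equality case of $\|a_i\|_3 \leq \|a_i\|_2$ pushed to the worst case. So I would instead take $a_i = e = (1,1,\dots,1)^\top$ for all $i$ (or any common vector with all entries equal in magnitude). Then $\|a_i\| = \sqrt d$, so $M = \frac{c}{n}\sum_i d^{3/2} = c\,d^{3/2}$, while $|a_{ij}| = 1$ for all $i,j$, so $M_j = \frac{c}{n}\sum_i 1 = c$ for every $j$, hence $\max_j M_j = c = \frac{M}{d^{3/2}}$, as desired. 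One should double-check that these $a_i$ are admissible (they are: the lemma only posits some vectors $\{a_i\}$ and a bound $|\nabla^3\phi_i| \le c$, which we may take to be attained, e.g. $\phi_i(y) = \frac{c}{6} y^3$ suitably symmetrized or any cubic-like loss), and note in passing that taking all $a_i$ equal means $\nabla^2 f$ is rank one, which is fine for illustrating the sharpness of the $M$ versus $M_j$ comparison even though one would typically add a strongly convex term for the algorithmic results; since the lemma statement only concerns the constants $M, M_j$, this causes no issue.
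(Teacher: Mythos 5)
Your proof is correct and follows essentially the same route as the paper: the first part is the trivial termwise bound $|a_{ij}|^3 \leq \|a_i\|^3$, and for sharpness the paper likewise takes all entries of magnitude one ($a_{ij} \in \{-1,1\}$, of which your all-ones choice is a special case), giving $M = c\,d^{3/2}$ versus $M_j = c$. The exploratory detour through basis vectors is not needed, but the final construction and computation match the paper's.
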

\begin{proof}
The first part is trivial. For the second part, consider $a_{i,j}\in \{-1,1\}$. 
\end{proof}

\begin{remark}
One might avoid the last inequality from~\eqref{eq:ndanjdajn} using polynomial optimization; however, this might be more expensive than solving the original optimization problem and thus is not preferable. Another strategy would be to use a line search, see Section~\ref{sec:solving}.
\end{remark}

Both the formula for $M$ and the formula for $M_j$ require the prior knowledge of $c\geq 0$ such that $|\nabla^3 \phi_i (y)| \leq c$ for all $i$. The next lemma shows how to compute such $c$ for the logistic regression (binary classification model).

\begin{lemma}
Let $\phi_i(y) = \log(1+e^{-b_iy})$, where $ b_i \in \{-1,1\}$. Then $c = \frac{1}{6\sqrt{3}}$.
\end{lemma}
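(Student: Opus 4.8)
The plan is to compute $\max_y |\phi_i'''(y)|$ for $\phi_i(y) = \log(1 + e^{-b_i y})$ with $b_i \in \{-1,1\}$ directly, and show that the maximum equals $\frac{1}{6\sqrt{3}}$. First I would note that since $b_i \in \{-1,1\}$, the map $y \mapsto \phi_i(y)$ is, up to reflection $y \mapsto -y$, the same function $\ell(y) \eqdef \log(1+e^{-y})$; reflection flips the sign of the third derivative but not its absolute value, so it suffices to bound $|\ell'''(y)|$ over $y \in \R$. Writing $\sigma(y) \eqdef \frac{1}{1+e^{-y}}$ for the logistic sigmoid, one has the standard identities $\ell'(y) = \sigma(y) - 1$, $\ell''(y) = \sigma(y)(1-\sigma(y))$, and $\ell'''(y) = \sigma(y)(1-\sigma(y))(1 - 2\sigma(y))$.

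Next I would substitute $t \eqdef \sigma(y) \in (0,1)$, so that the problem reduces to maximizing $g(t) \eqdef |t(1-t)(1-2t)|$ over $t \in (0,1)$. By the symmetry $t \mapsto 1-t$ (which sends $g$ to itself), it is enough to maximize $t(1-t)(2t-1)$ on $[\tfrac12, 1]$. Differentiating $p(t) \eqdef t(1-t)(2t-1) = -2t^3 + 3t^2 - t$ gives $p'(t) = -6t^2 + 6t - 1$, whose roots are $t = \frac{1}{2} \pm \frac{1}{2\sqrt{3}}$. Taking the root $t^\star = \frac12 + \frac{1}{2\sqrt3}$ in $[\tfrac12,1]$ and plugging back in, a short computation gives $p(t^\star) = \frac{1}{6\sqrt{3}}$ (indeed $1 - 2t^\star = -\frac{1}{\sqrt3}$, $t^\star(1-t^\star) = \frac14 - \frac{1}{12} = \frac16$, so $|p(t^\star)| = \frac16 \cdot \frac{1}{\sqrt3}$). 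Since $g$ vanishes at the endpoints $t \in \{0, \tfrac12, 1\}$, this interior critical value is the global maximum, so $c = \max_i \max_y |\phi_i'''(y)| = \frac{1}{6\sqrt3}$.

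There is essentially no serious obstacle here — the only things to be a little careful about are: (i) confirming that the case $b_i = -1$ genuinely contributes nothing new beyond $b_i = 1$, which is immediate from the chain rule picking up a factor $b_i^3 = \pm 1$; and (ii) checking that the critical point we selected lies in the correct sub-interval and yields the larger of the two symmetric extrema, rather than a saddle or a spurious root. Both are routine. I would present the argument in the order above: reduce to $\ell'''$, express it through $\sigma$, substitute $t = \sigma(y)$, optimize the resulting cubic on $[\tfrac12,1]$, and read off $\frac{1}{6\sqrt3}$.
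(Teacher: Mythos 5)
Your proposal is correct and follows essentially the same route as the paper: the paper's proof simply writes the third derivative as $-\frac{e^y(e^y-1)}{(1+e^y)^3}$ (which is exactly your $\sigma(1-\sigma)(1-2\sigma)$) and asserts the bound $\frac{1}{6\sqrt{3}}$ without further detail. You merely carry out explicitly the one-dimensional maximization over $t=\sigma(y)\in(0,1)$ that the paper leaves implicit, and your computation of the critical point $t^\star=\frac12+\frac{1}{2\sqrt3}$ and the value $\frac{1}{6\sqrt3}$ is correct.
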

\begin{proof}
$\nabla^3 \phi_i(y) = -\frac{e^x(e^x-1)}{(1+e^x)^3}$ $\Rightarrow$ $\left| \nabla^3 \phi_i(y) \right|\leq \frac{1}{6\sqrt{3}}$.
\end{proof}

\paragraph{Cost of performing a single iteration}
For the sake of simplicity, let $\tau(\mS)=1$, $\psi=0$. Any CD method (i.e,. method with update rule~\eqref{eq:update_general} with $\mS\in \{e_1, \dots, e_d\}$) can be efficiently implemented by memorizing the residuals $\la a_i, x^k \ra$, which is cheap to track since $x^{k+1} - x^k$ is a sparse vector. The overall cost of updating the residuals is $\cO(n)$ while the cost of computing $\nabla_i f(x)$ and $\nabla^2_{i,i} f(x)$ (given the residuals are stored) is $\cO(n)$. Therefore the overall cost of performing a single iteration is $\cO(n)$. Generalizing to $\tau(\mS)=\tau\geq 1$, the overall cost of single iteration of SSCN can be estimated as $\cO(n\tau^2 + \tau^3)$,
where $\cO(n\tau^2)$ comes from evaluating subspace gradient and Hessian, while $\cO(\tau^3)$ comes from solving the cubic subproblem.

\subsection{Dual of linear models \label{sec:dual}}
So far, all results and applications for CRDS we mentioned were problems with large model size $d$. In this section we describe how SSCN can be efficient to tackle big data problems in some settings. Let $\mA\in \R^{n\times d}$ is data matrix and consider a specific instance of~\eqref{eq:linear_model} where
\begin{equation}\label{eq:to_be_dualized}
 \min \limits_{x\in \R^d} F_P(x) \eqdef  \frac1d \sum \limits_{i=1}^n \rho_i(\mA_{(:,i)}x) + \frac{\lambda}{2} \|x \|^2.
\end{equation}
where $\rho_i$ is convex for all $i$. One can now formulate a dual problem of~\eqref{eq:to_be_dualized} as follows:
\begin{equation}\label{eq:dual_linear}
 \max \limits_{y\in \R^n} F_D(y) \eqdef - \frac{1}{2\lambda n^2} \left\| \mA^\top y \right\|^2 -\frac{1}{n} \sum \limits_{i=1}^n\rho_i^*(e_i^\top x).
\end{equation}
Note that~\eqref{eq:dual_linear} is of form~\eqref{eq:linear_model}, and therefore if $\rho^*_i$ has Lipschitz Hessian, we can apply SSCN to efficiently solve it (same as Section~\ref{sec:linear}). Given the solution of~\eqref{eq:dual_linear}, we can recover the solution of~\eqref{eq:to_be_dualized} (duality theory). Thus, SSCN can be used as a data-stochastic method to solve finite-sum optimization problems.

The trick described in this section is rather well known. It was  first used in~\citep{shalev2013stochastic}, where  CD applied to the problem~\eqref{eq:dual_linear} (SDCA) was shown to be competitive with the variance reduced methods like SAG~\citep{roux2012stochastic}, SVRG~\citep{johnson2013accelerating} or SAGA~\citep{defazio2014saga}.


\section{Experiments}
We now numerically verify our theoretical claims.

\subsection{Logistic Regression}

In this section, we consider binary classification with  LIBSVM~\citep{chang2011libsvm} data modelled by regularized logistic regression. Regularized logistic regression is a machine learning model for binary classification. Given data matrix $\mA\in \R^{n\times d}$, labels $b\in \{-1, 1\}^{n}$ and regularization parameter $\lambda >0$, the training corresponds to solving the following optimization problem
\[
f(x)= \frac1n \sum_{i=1}^n \log \left(1+\exp\left(\mA_{i,:}x\cdot  b\right) \right)+\frac{\lambda}{2} \| x\|^2, \qquad x\in \R^d.
\]

We compare SSCN against three different instances of (first-order) randomized coordinate descent: CD with uniform sampling, CD with importance sampling~\citep{nesterov2012efficiency}, and accelerated CD with importance sampling~\citep{allen2016even, nesterov2017efficiency}. 

\subsubsection{Coordinate sketching setup}

In the first experiment, we compare SSCN to first-order coordinate descent (CD) on LIBSVM~\citep{chang2011libsvm}. We consider three different instances of CD: CD with uniform sampling, CD with importance sampling~\citep{nesterov2012efficiency}, and accelerated CD with importance sampling~\citep{allen2016even, nesterov2017efficiency}.

In order to be comparable with the mentioned first-order methods, we consider $\mS\in \{e_1, \dots, e_d\}$ with probability 1  -- the complexity of performing each iteration is about the same for each algorithm now. At the same time, computing $M_{e_i}$ for all $1\leq i \leq d$ is of cost $\cO(nd)$ -- the same cost as computing coordinate-wise smoothness constants for (accelerated) coordinate descent (see Section~\ref{sec:linear} for the details). Figure~\ref{fig:libsvm} shows the result for non-normalized data, while Figure~\ref{fig:libsvm_normalzied} shows the results for normalized data (thus importance sampling is identical to uniform). 

In all examples, SSCN outperformed CD with uniform sampling. Moreover, the performance of SSCN was always either about the same or significantly better to CD with importance sampling. Furthermore, SSCN was also competitive to accelerated CD with importance sampling (in about half of the cases, SSCN was faster, while in the other half, accelerated CD was faster).

\begin{figure}[!h]
\centering
\begin{minipage}{0.3\textwidth}
  \centering
\includegraphics[width =  \textwidth ]{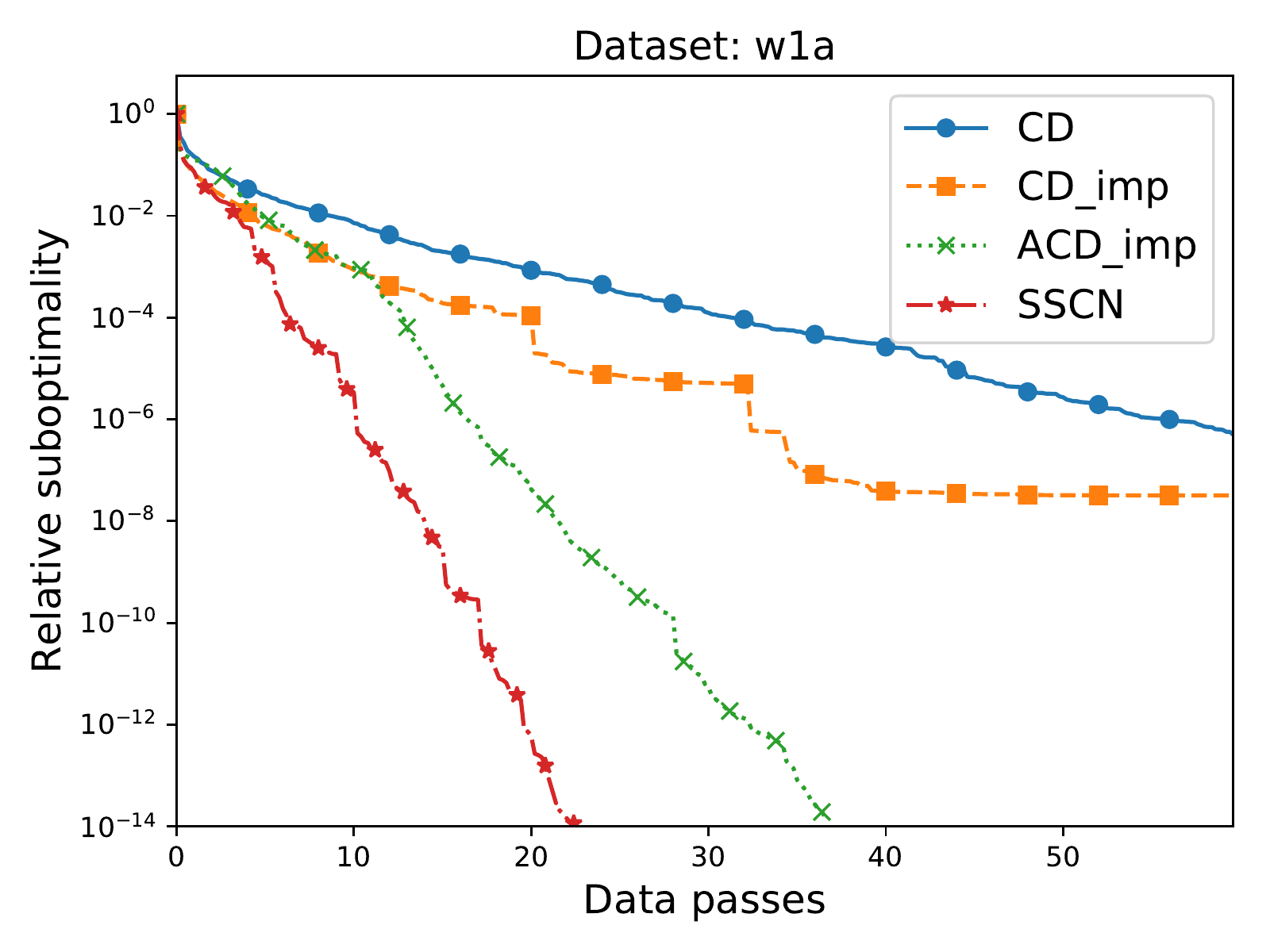}
\end{minipage}%
\begin{minipage}{0.3\textwidth}
  \centering
\includegraphics[width =  \textwidth ]{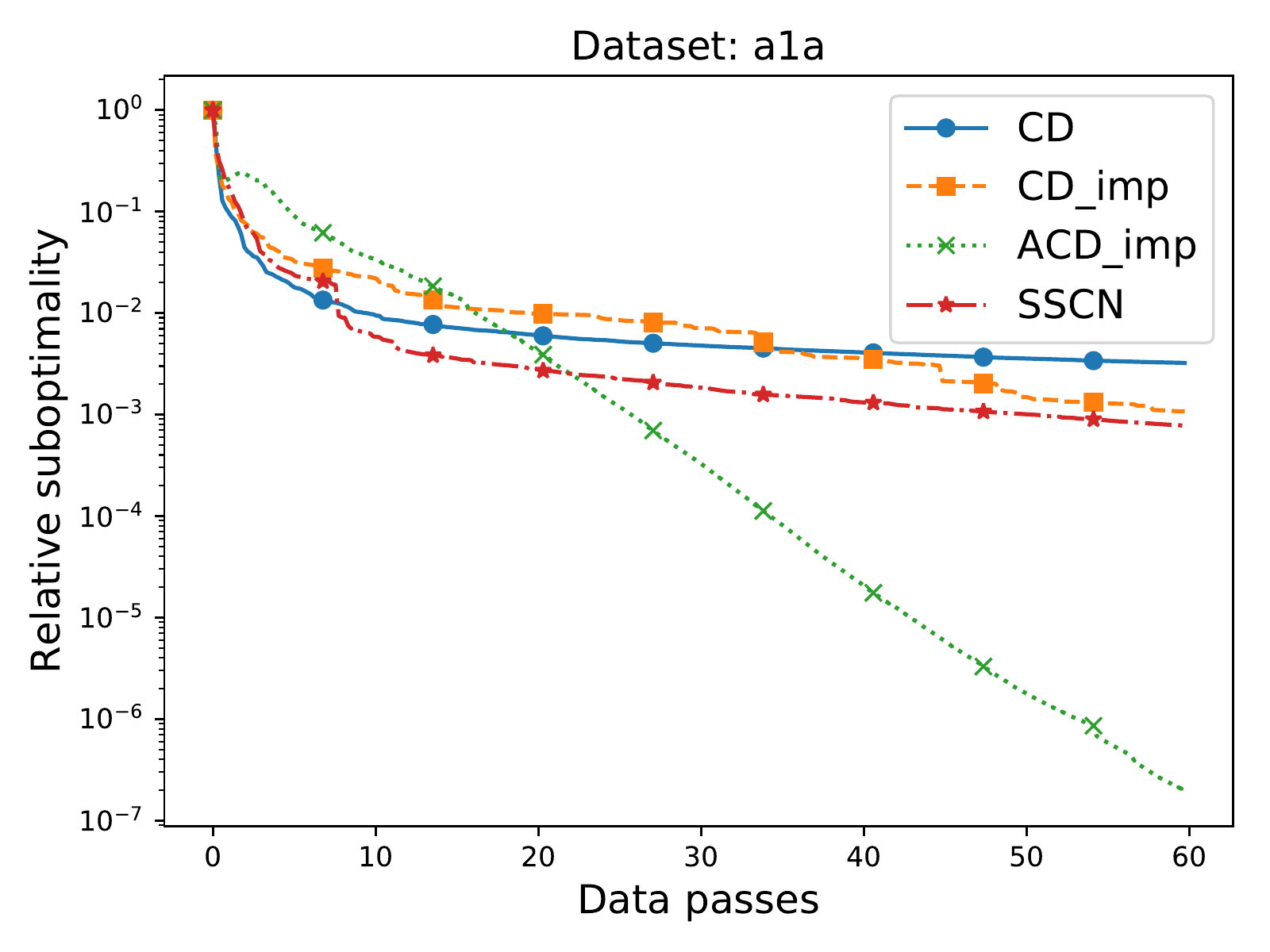}
\end{minipage}%
\begin{minipage}{0.3\textwidth}
  \centering
\includegraphics[width =  \textwidth ]{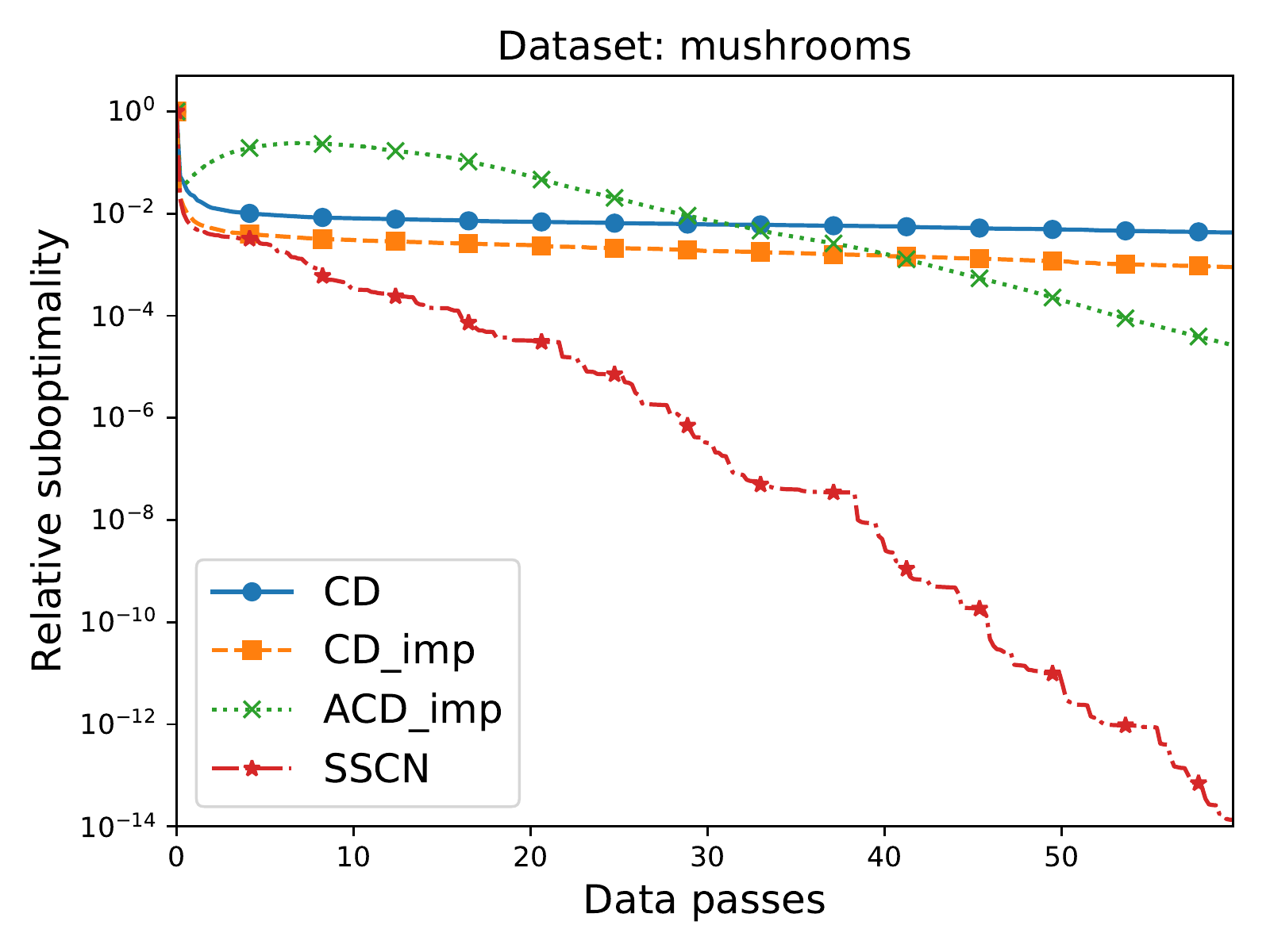}
\end{minipage}%
\\
\begin{minipage}{0.3\textwidth}
  \centering
\includegraphics[width =  \textwidth ]{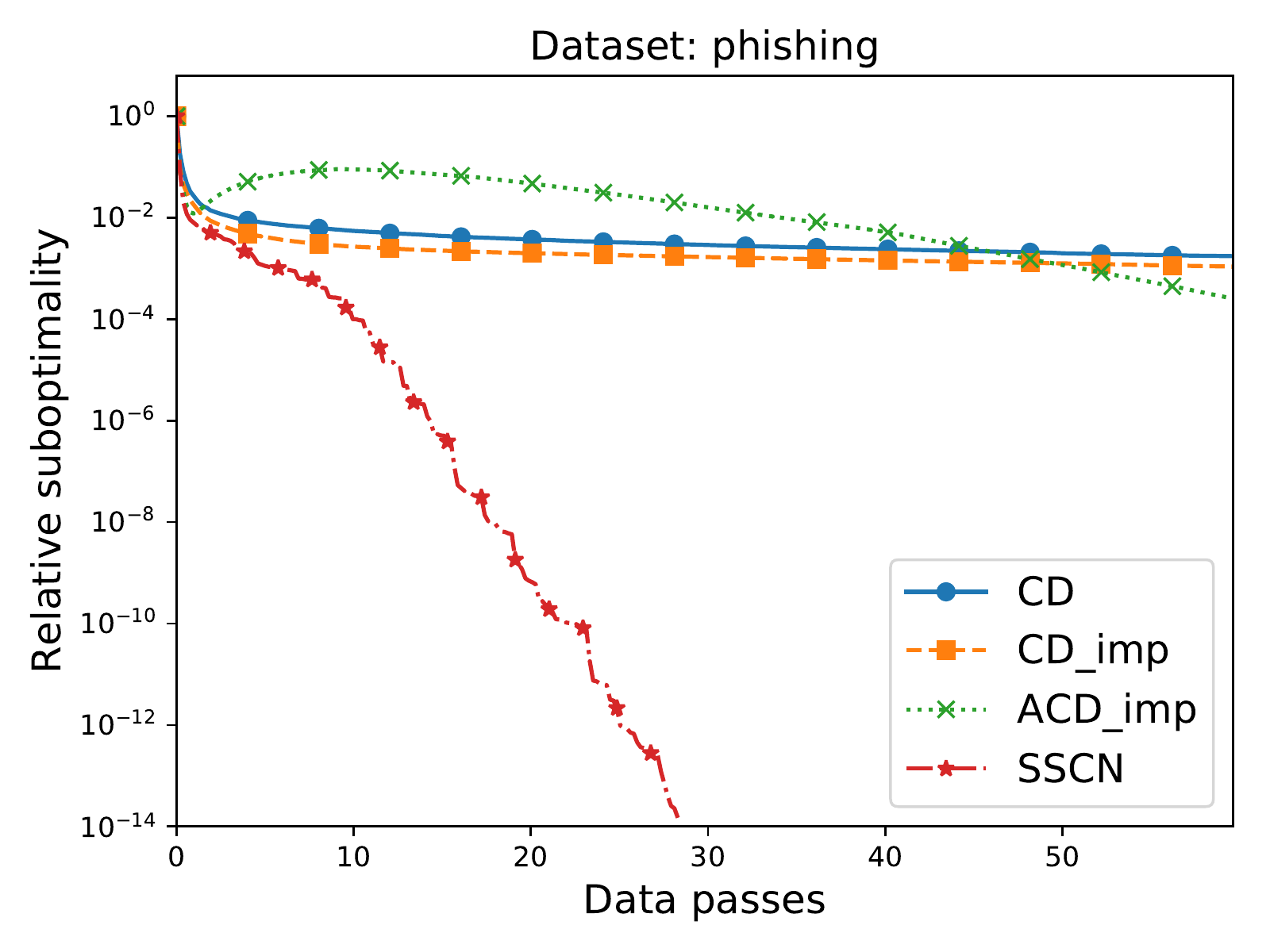}
\end{minipage}%
\begin{minipage}{0.3\textwidth}
  \centering
\includegraphics[width =  \textwidth ]{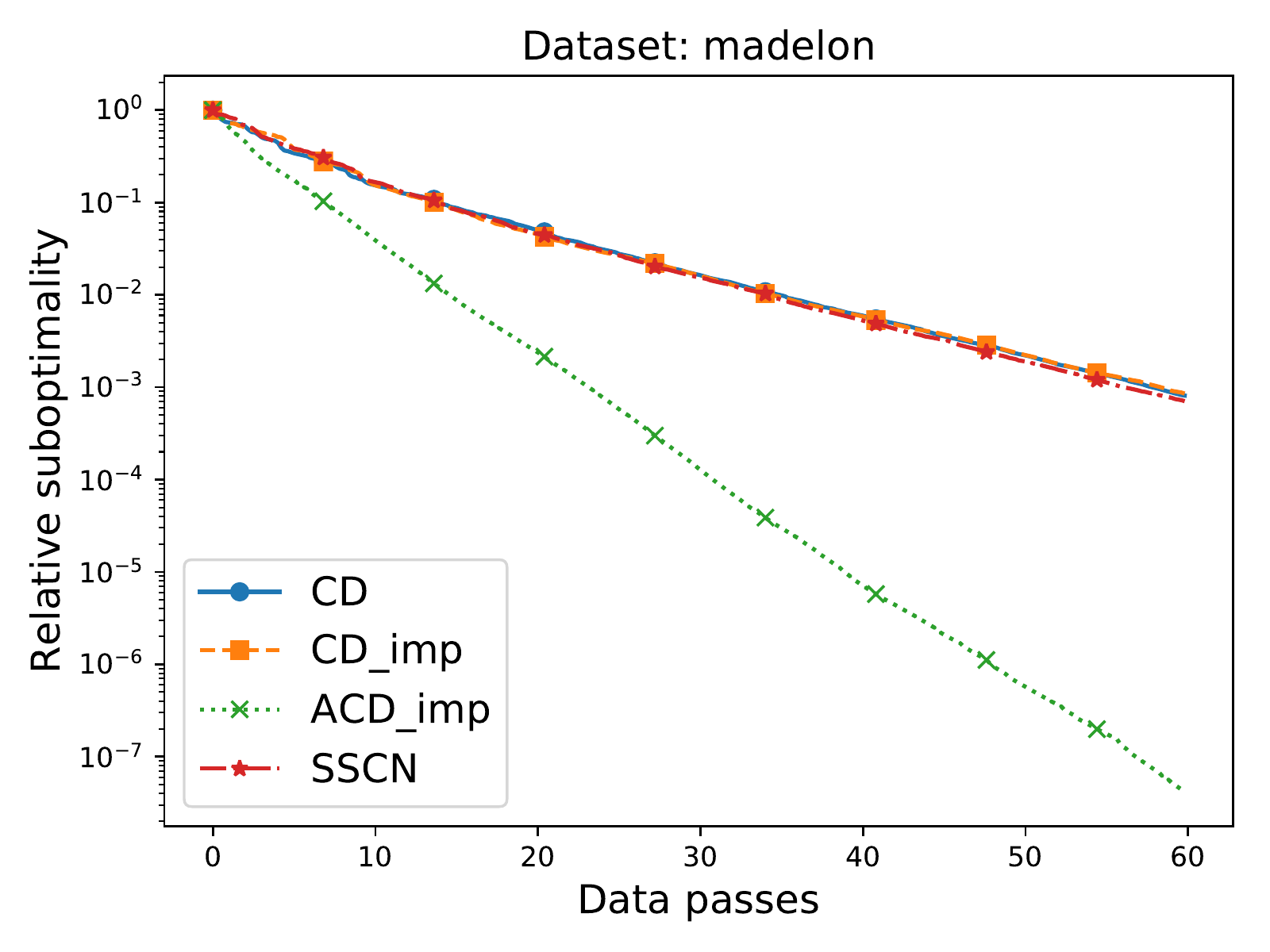}
\end{minipage}%
\begin{minipage}{0.3\textwidth}
  \centering
\includegraphics[width =  \textwidth ]{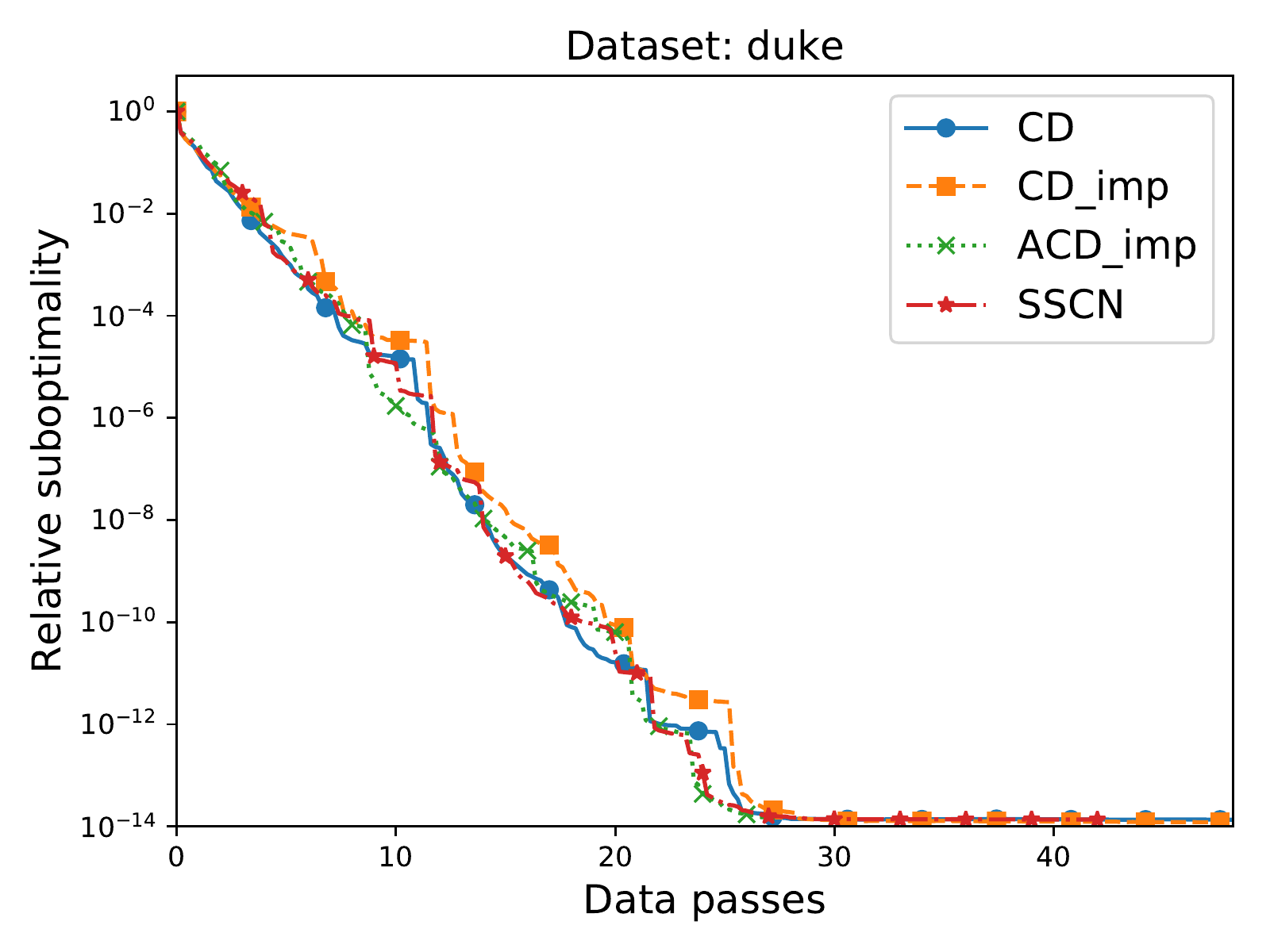}
\end{minipage}%
\\
\begin{minipage}{0.3\textwidth}
  \centering
\includegraphics[width =  \textwidth ]{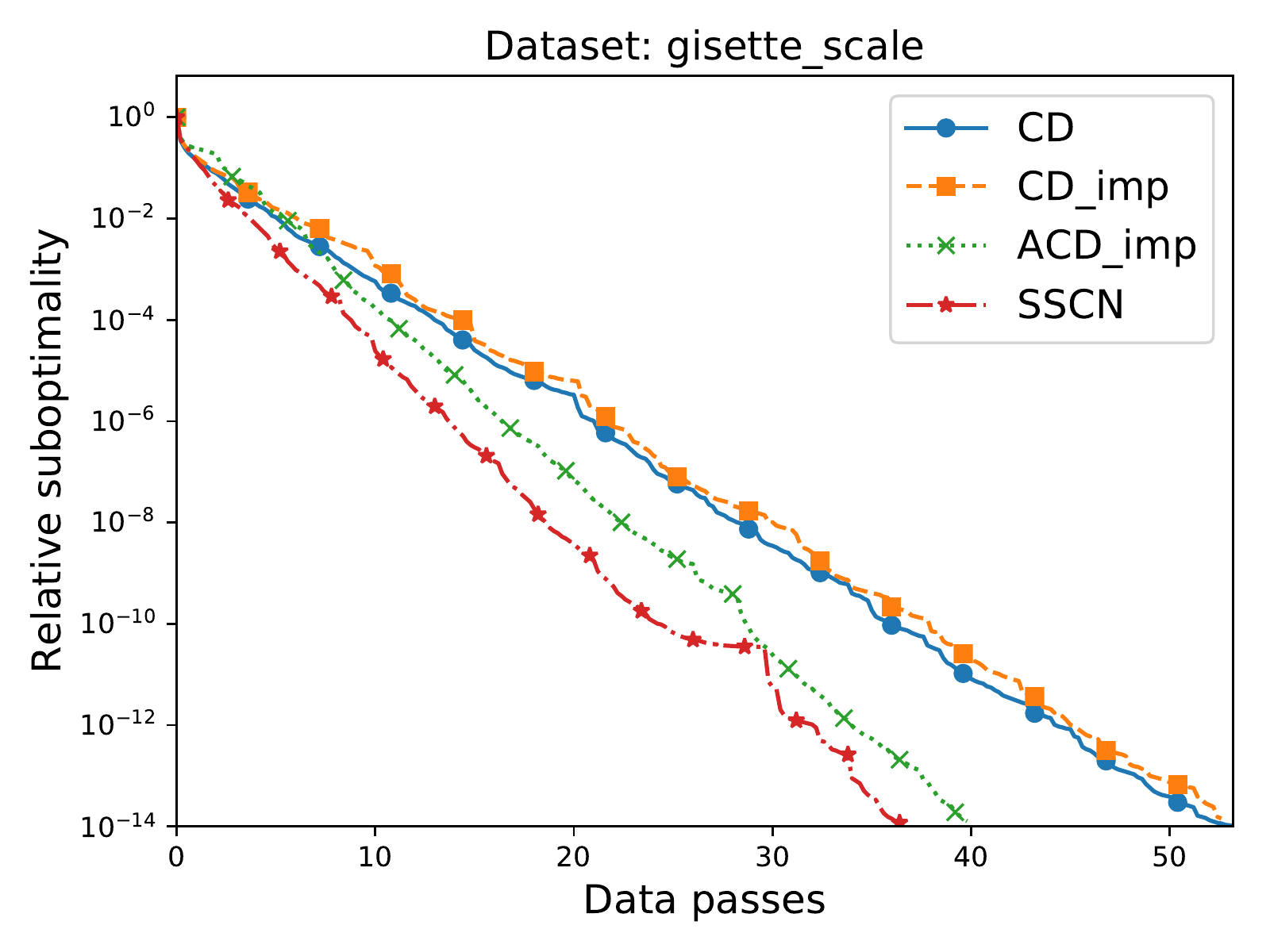}
\end{minipage}%
\begin{minipage}{0.3\textwidth}
  \centering
\includegraphics[width =  \textwidth ]{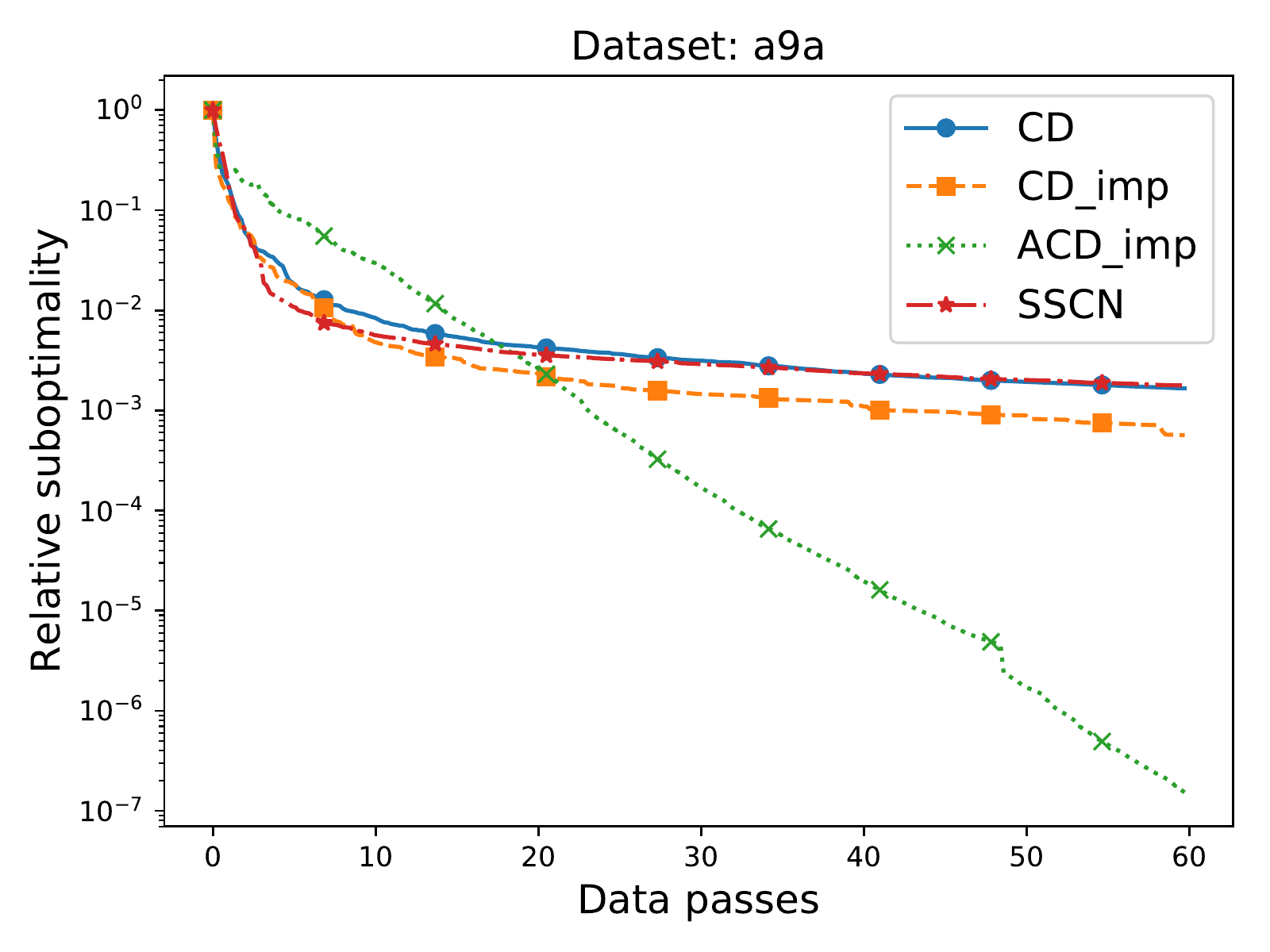}
\end{minipage}%
\begin{minipage}{0.3\textwidth}
  \centering
\includegraphics[width =  \textwidth ]{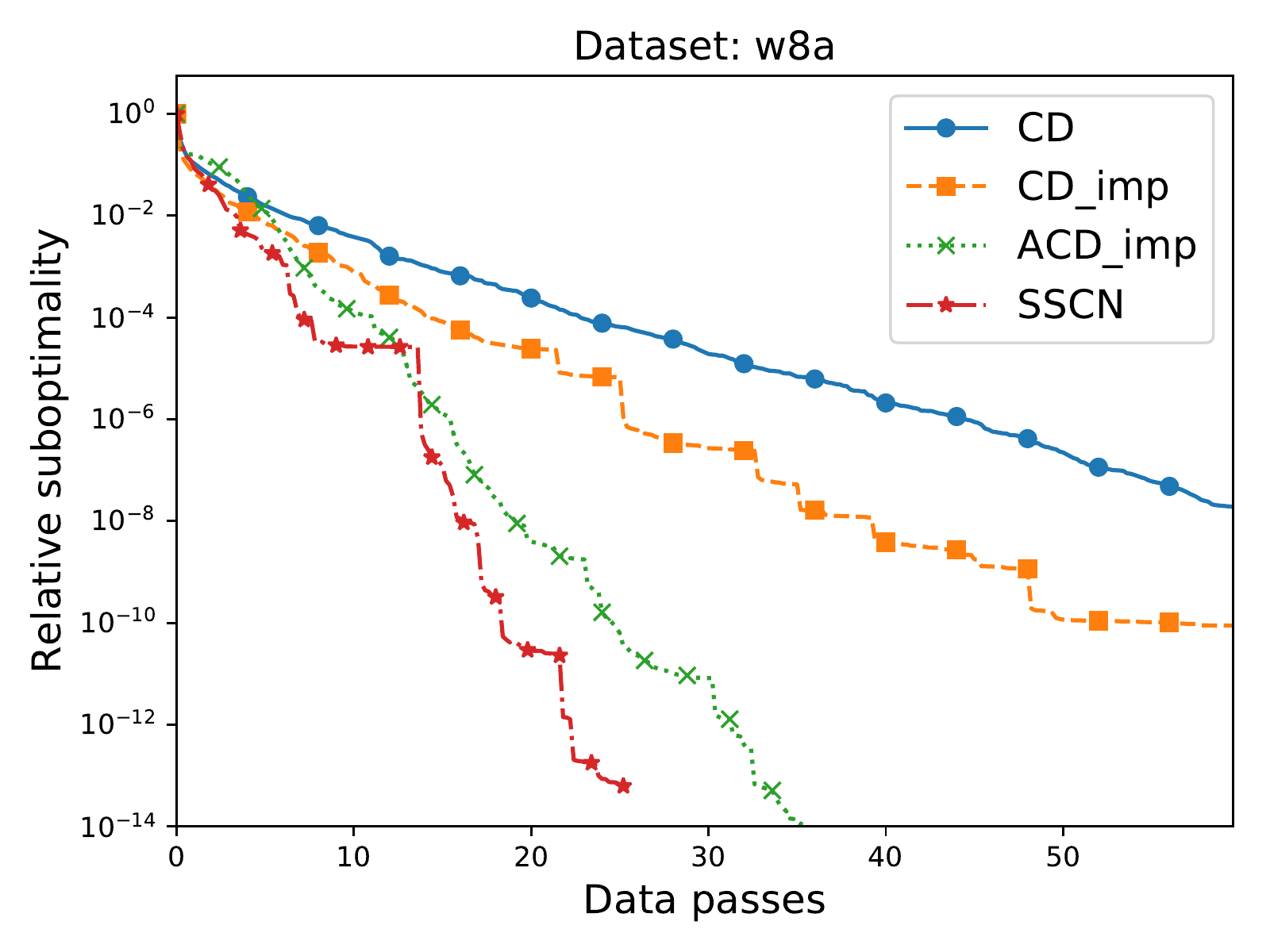}
\end{minipage}%
\caption{Comparison of CD with uniform sampling, CD with importance sampling, accelerated CD with importance sampling and SSCN (Algorithm~\ref{alg:crcd}) with uniform sampling on LibSVM datasets.} 
\label{fig:libsvm}
\end{figure}

\begin{figure}[!h]
\centering
\begin{minipage}{0.3\textwidth}
  \centering
\includegraphics[width =  \textwidth ]{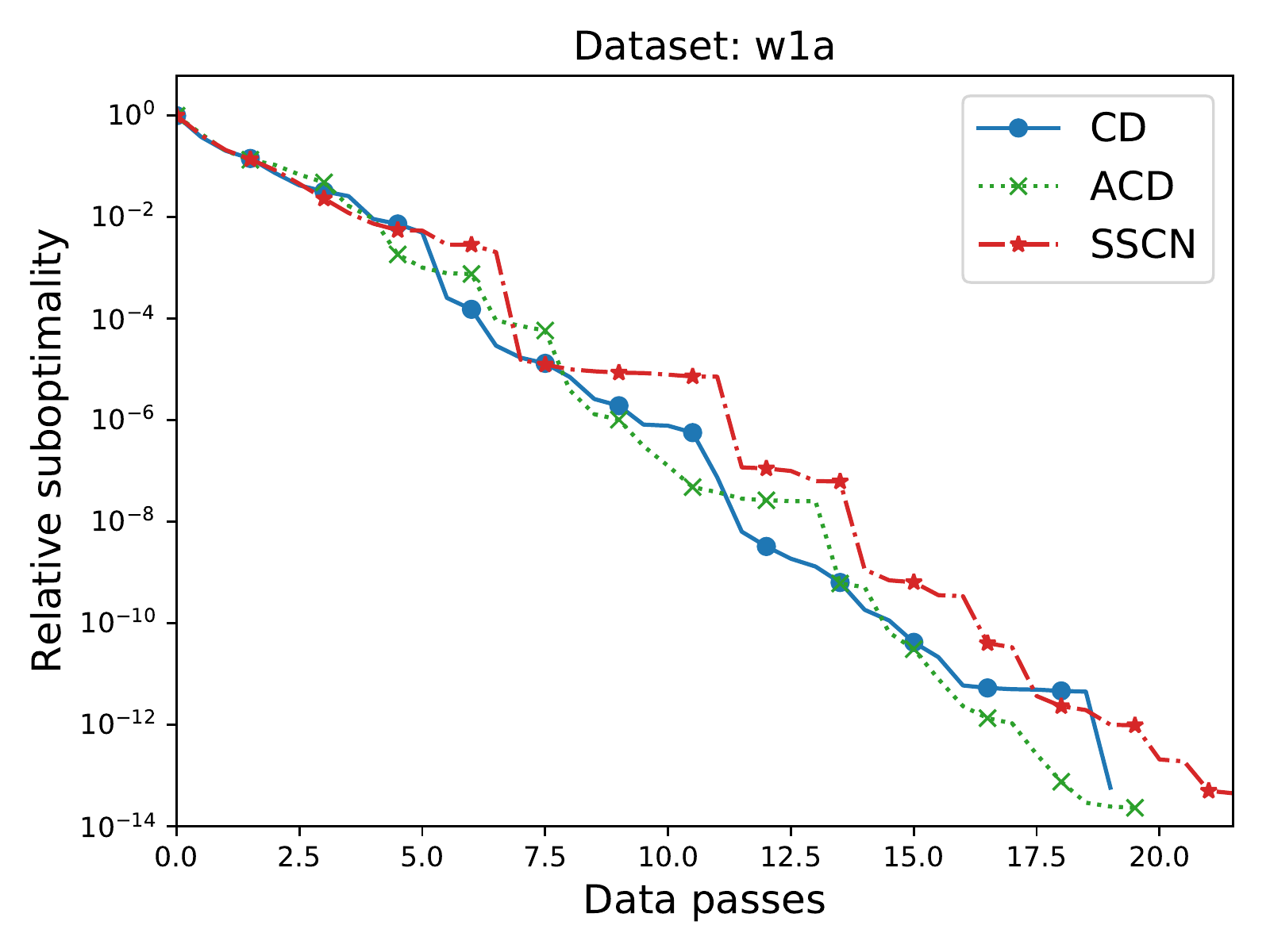}
\end{minipage}%
\begin{minipage}{0.3\textwidth}
  \centering
\includegraphics[width =  \textwidth ]{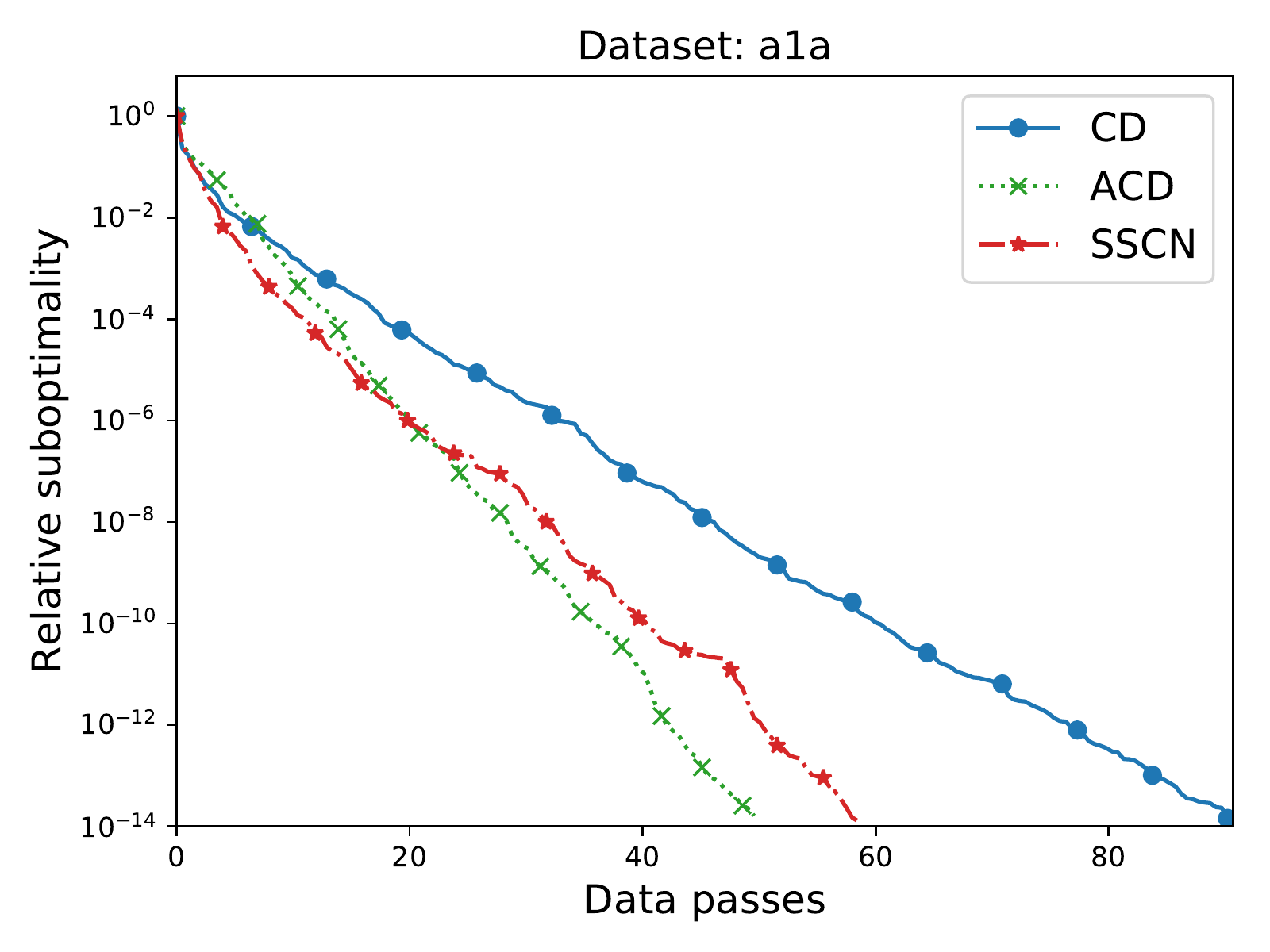}
\end{minipage}%
\begin{minipage}{0.3\textwidth}
  \centering
\includegraphics[width =  \textwidth ]{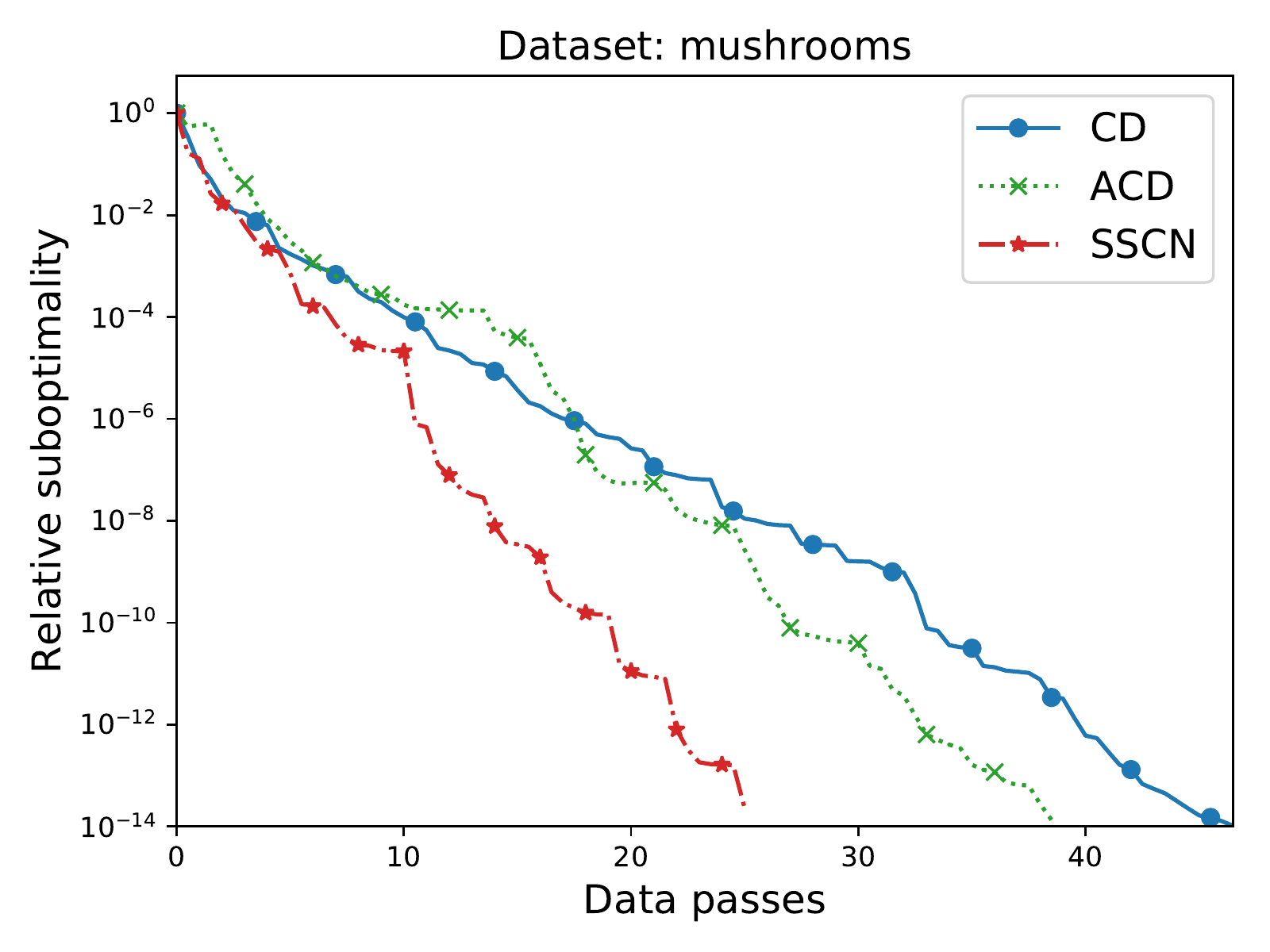}
\end{minipage}%
\\
\begin{minipage}{0.3\textwidth}
  \centering
\includegraphics[width =  \textwidth ]{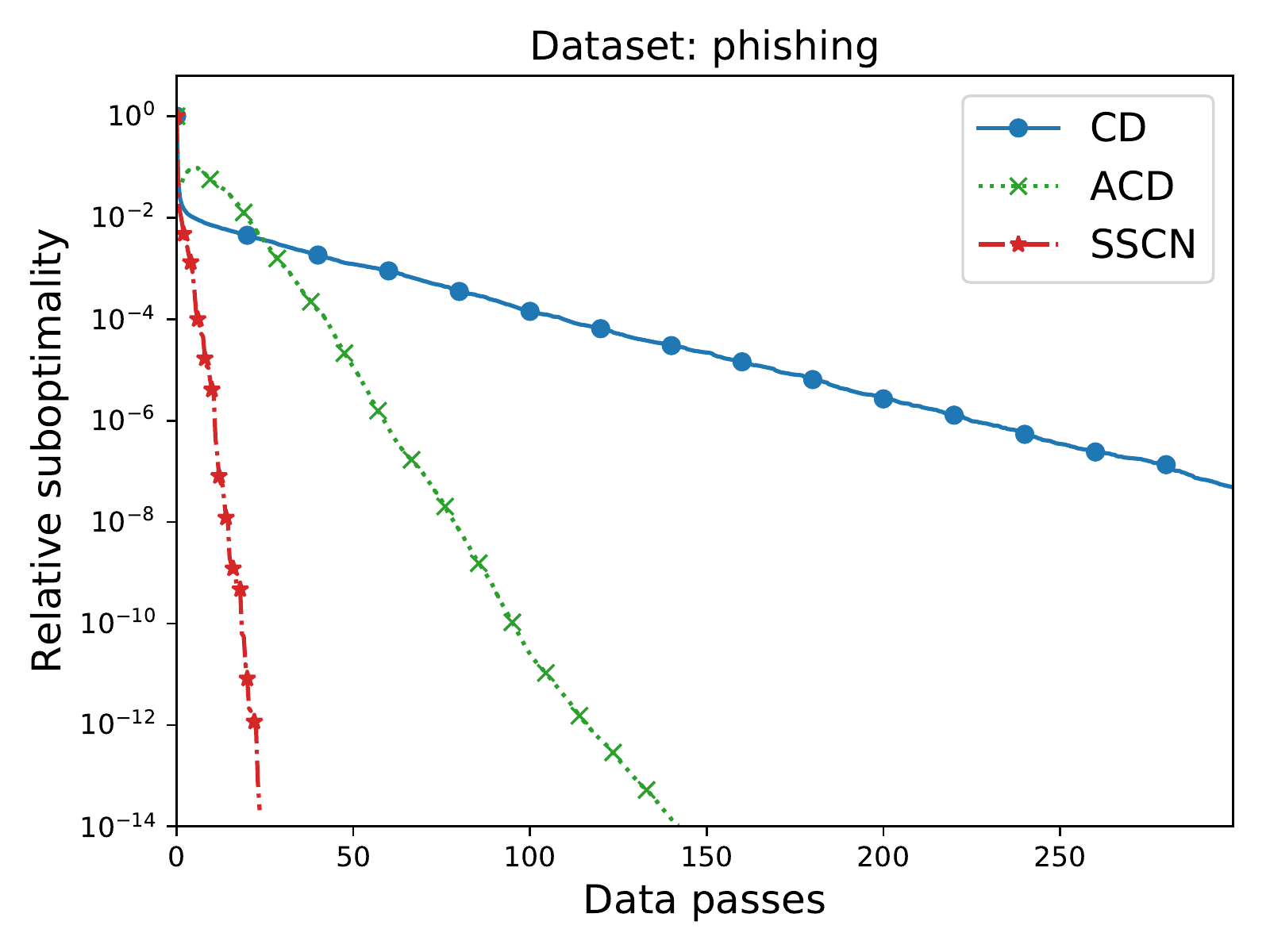}
\end{minipage}%
\begin{minipage}{0.3\textwidth}
  \centering
\includegraphics[width =  \textwidth ]{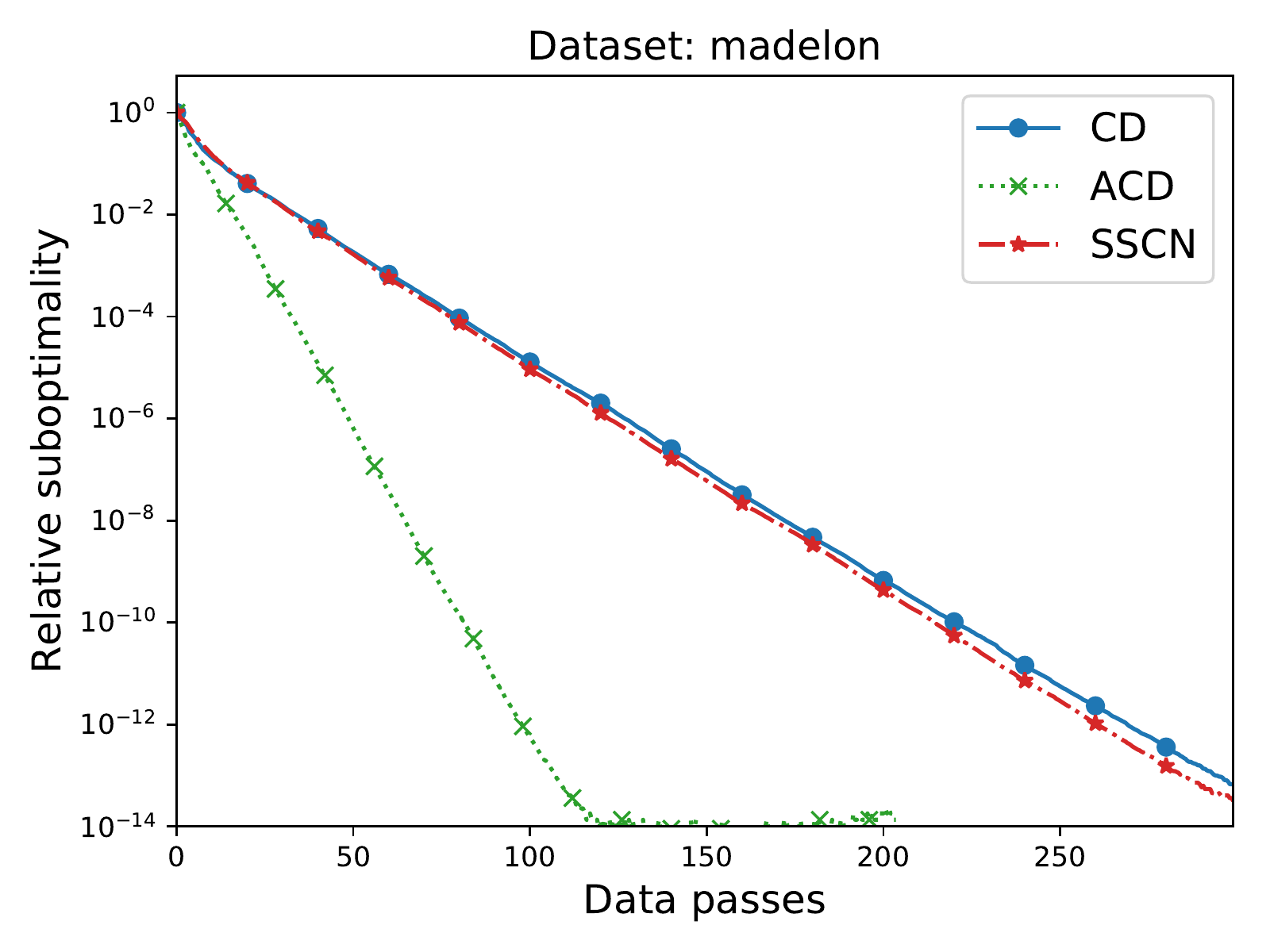}
\end{minipage}%
\begin{minipage}{0.3\textwidth}
  \centering
\includegraphics[width =  \textwidth ]{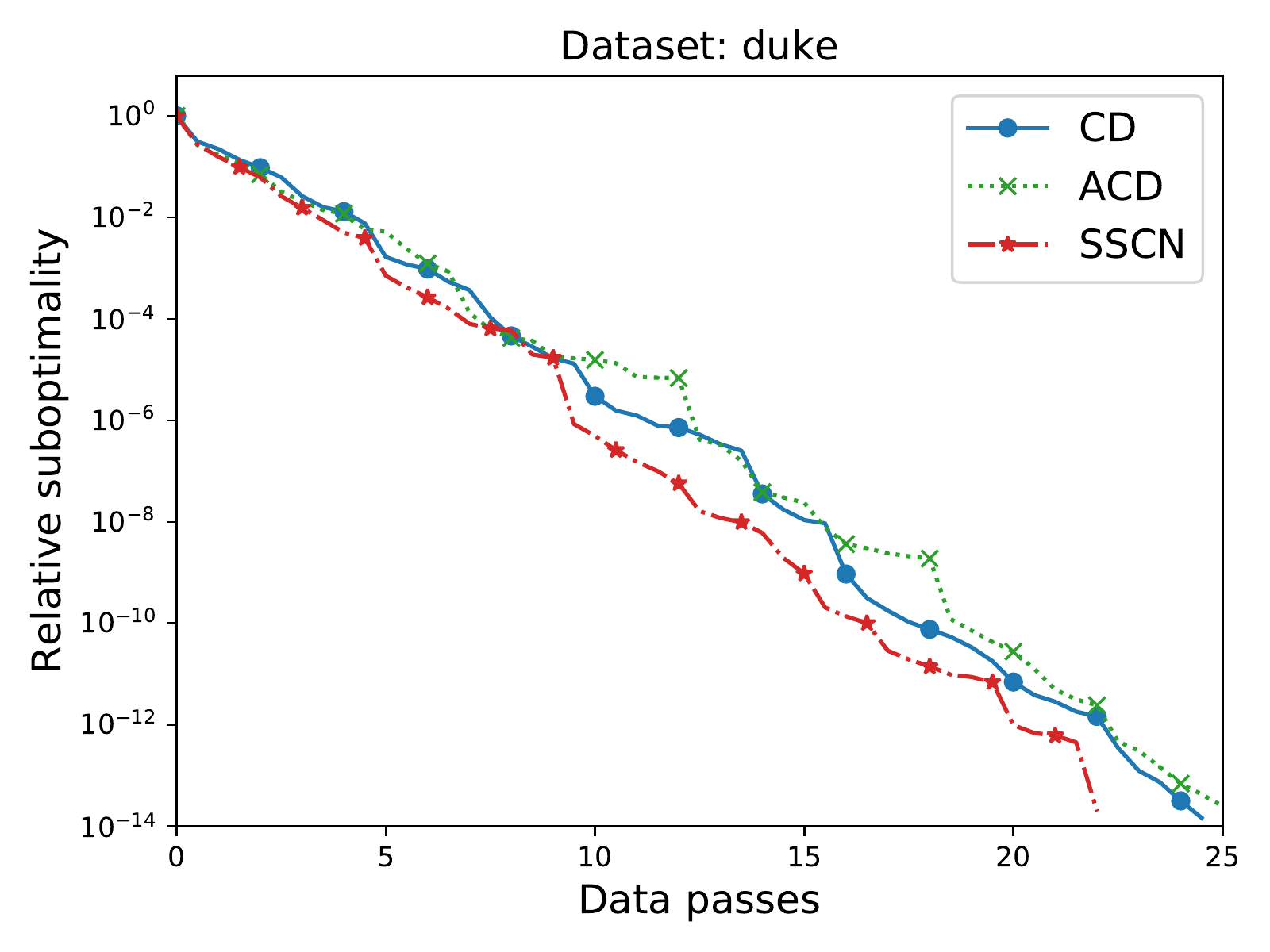}
\end{minipage}%
\\
\begin{minipage}{0.3\textwidth}
  \centering
\includegraphics[width =  \textwidth ]{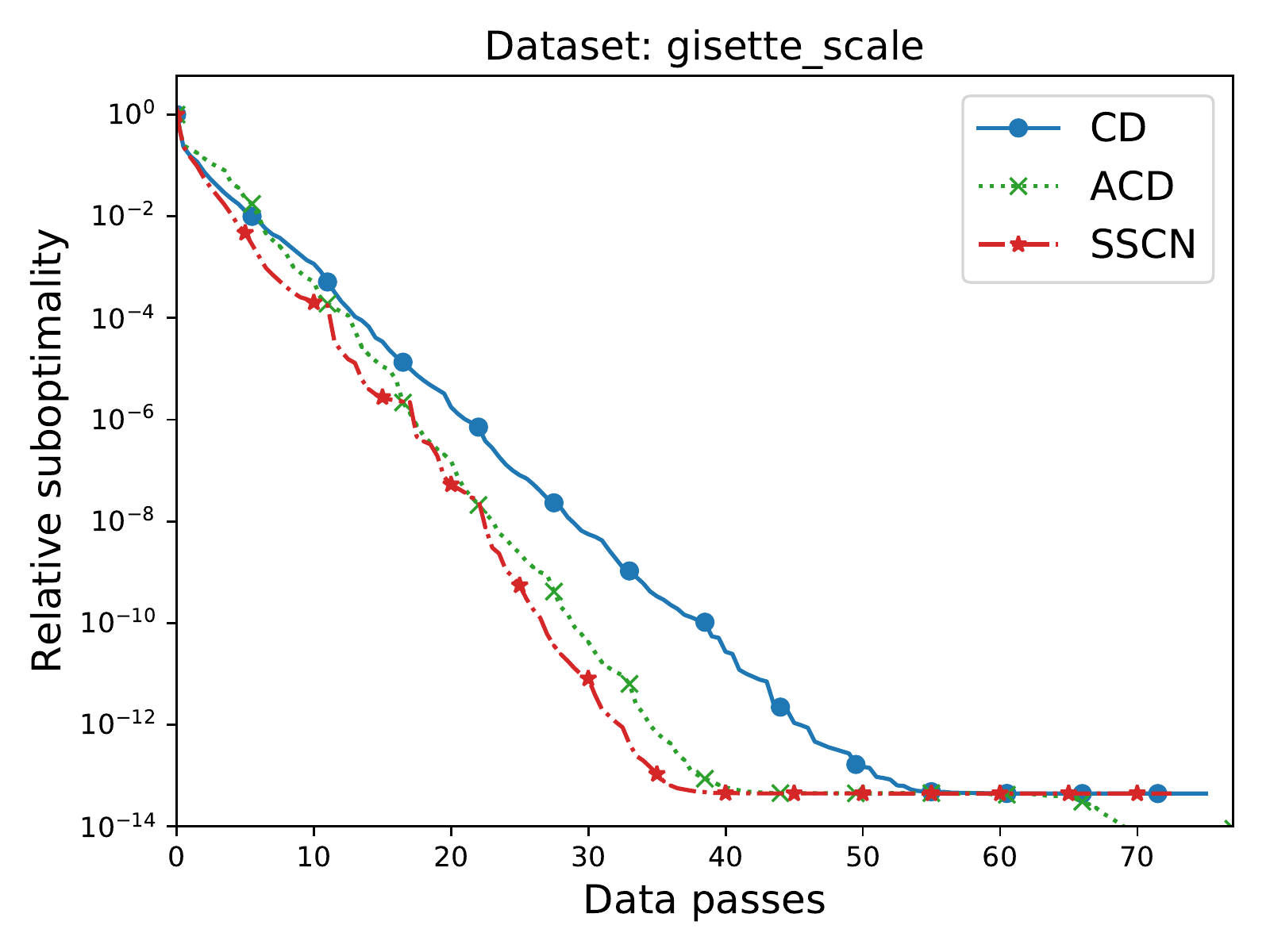}
\end{minipage}%
\begin{minipage}{0.3\textwidth}
  \centering
\includegraphics[width =  \textwidth ]{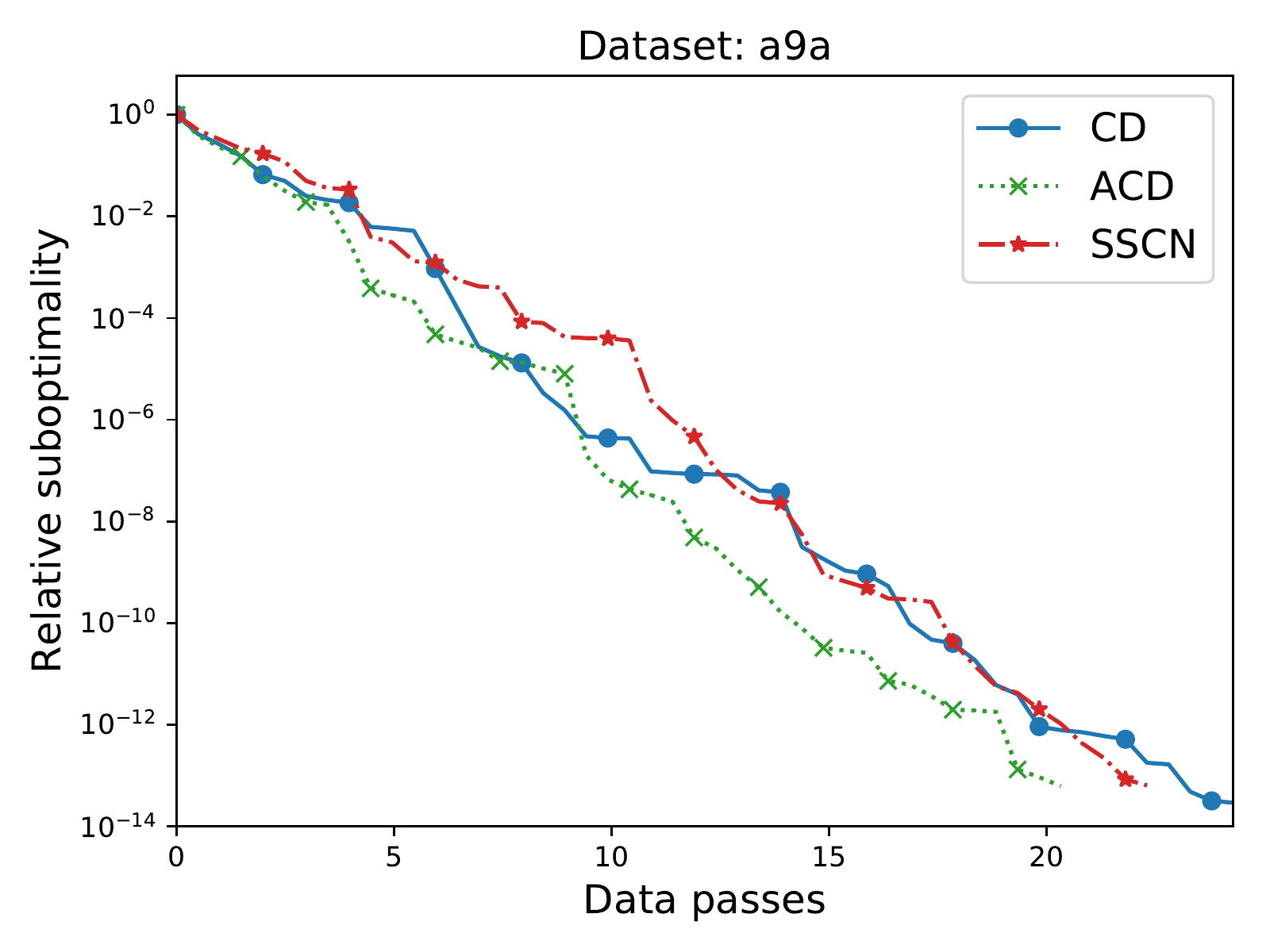}
\end{minipage}%
\begin{minipage}{0.3\textwidth}
  \centering
\includegraphics[width =  \textwidth ]{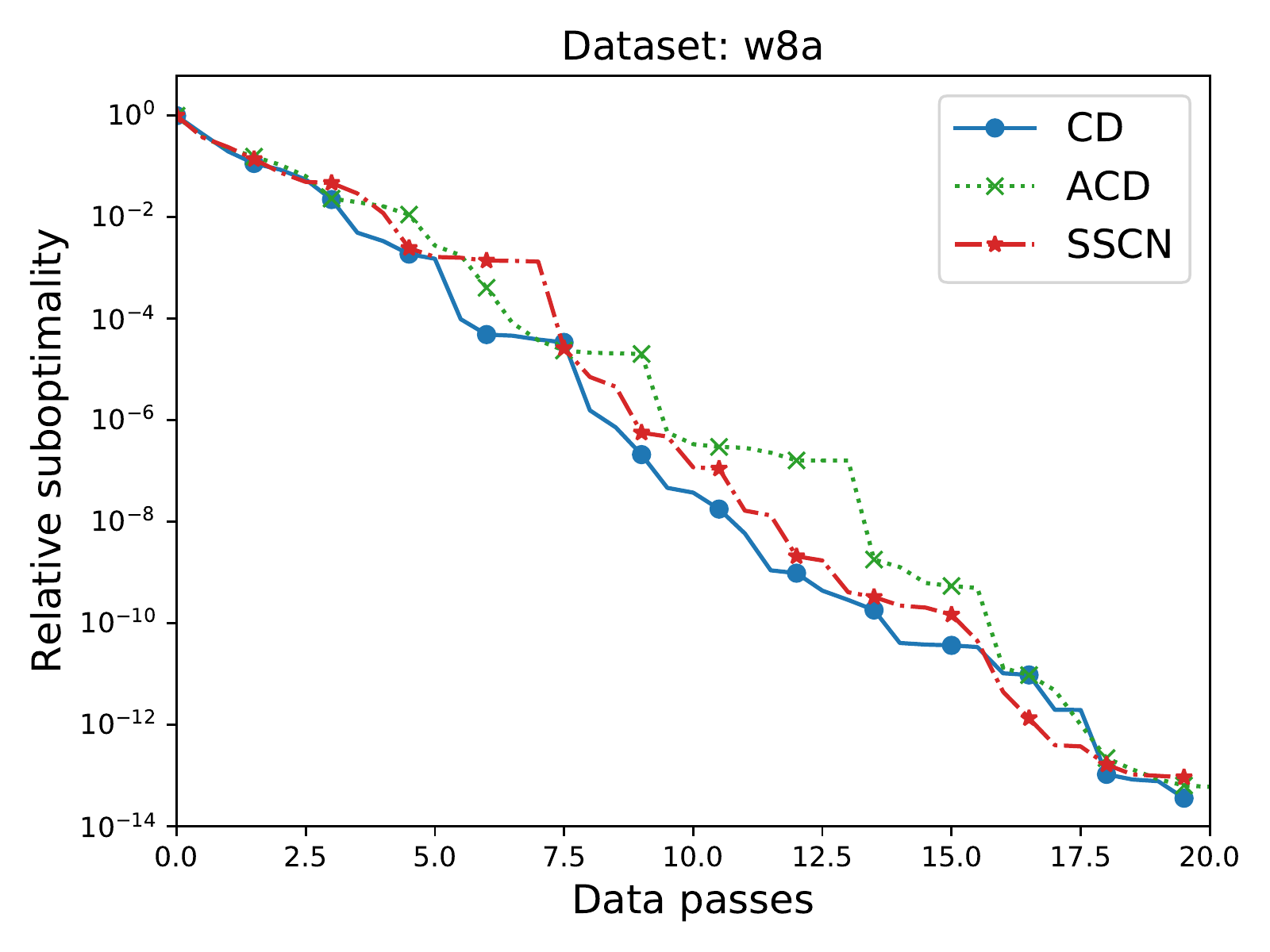}
\end{minipage}%
\caption{Comparison of coordinate descent, accelerated coordinate descent and SSCN (all with uniform sampling) on LibSVM datasets. In each case we have normalized the data matrix to have identical norms of all columns.} 
\label{fig:libsvm_normalzied}
\end{figure}

\subsubsection{Effect of sketch size}

The next experiment studies the effect of $\tau(\mS)$ on the convergence. We compare SSCN against the fastest non-accelerated first-order method -- SDNA, both with varying $\tau(\mS)$. 
We consider $\tau \in \{1,5,25\}$. In all cases, we sample uniformly --  every subset of size $\tau$ have equal chance to be chosen at every iteration (independent of the past). 

There is, however, one tricky part in terms of implementation. While we can evaluate and store $M_{e_i}$ ($i\leq d$) cheaply for linear models, this is not the case for evaluating/storing $M_S$ (at least we do not know how to do it efficiently). Therefore, we use $M_S = M$ for $|S|>1$ for SSCN. Figure~\ref{fig:libsvm_sdna} shows the result.

\begin{figure}[!h]
\centering
\begin{minipage}{0.3\textwidth}
  \centering
\includegraphics[width =  \textwidth ]{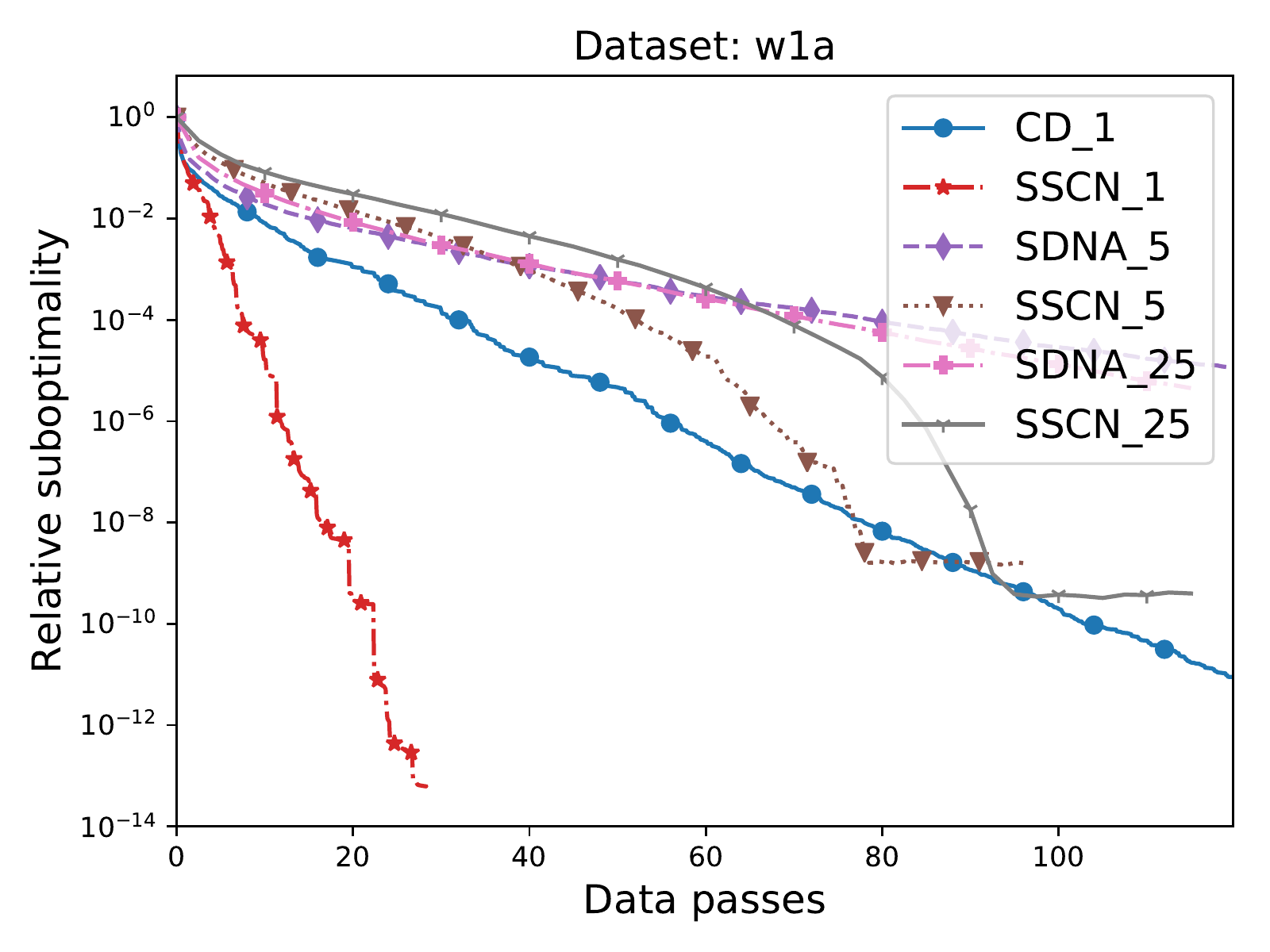}
\end{minipage}%
\begin{minipage}{0.3\textwidth}
  \centering
\includegraphics[width =  \textwidth ]{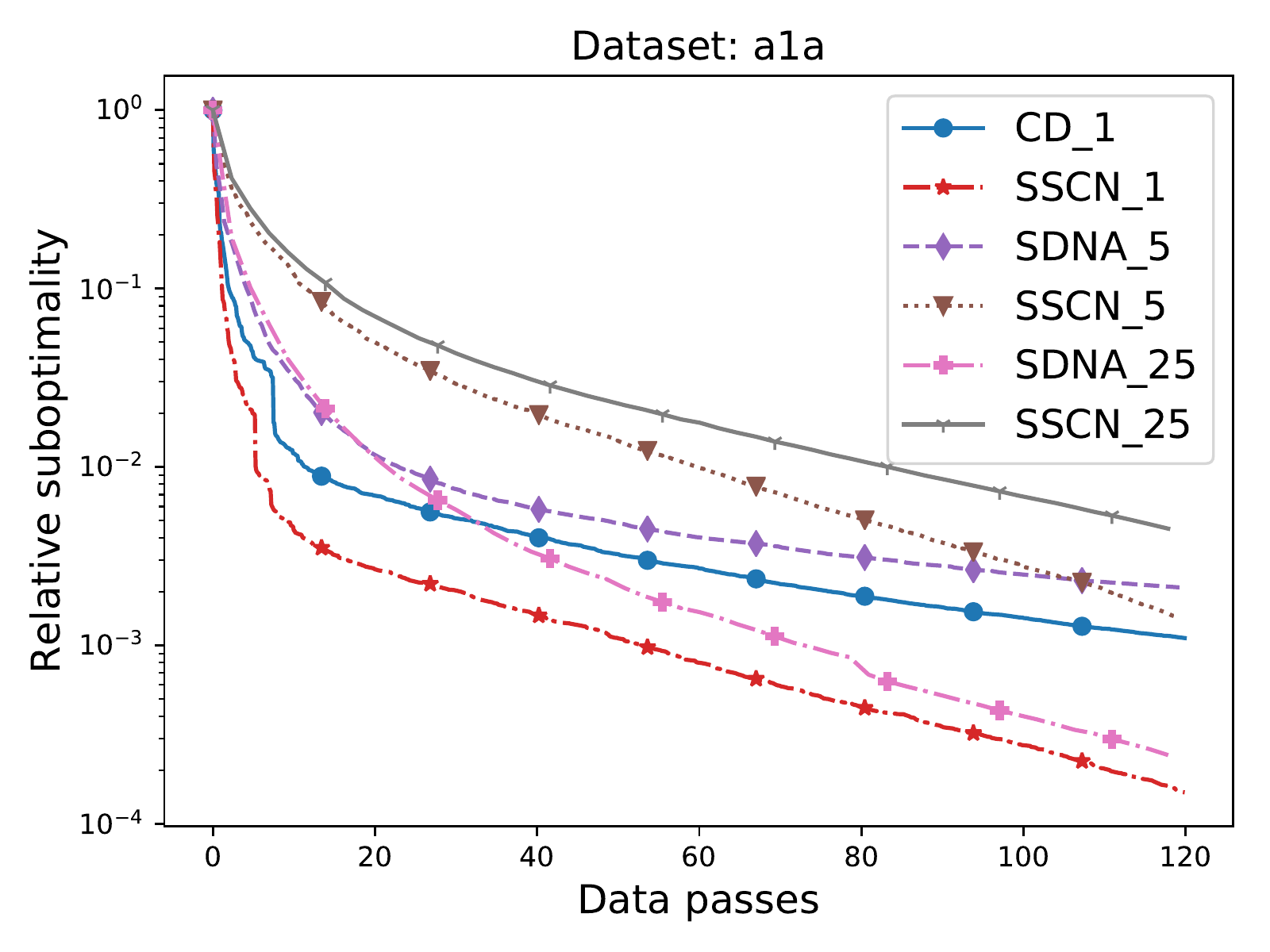}
\end{minipage}%
\begin{minipage}{0.3\textwidth}
  \centering
\includegraphics[width =  \textwidth ]{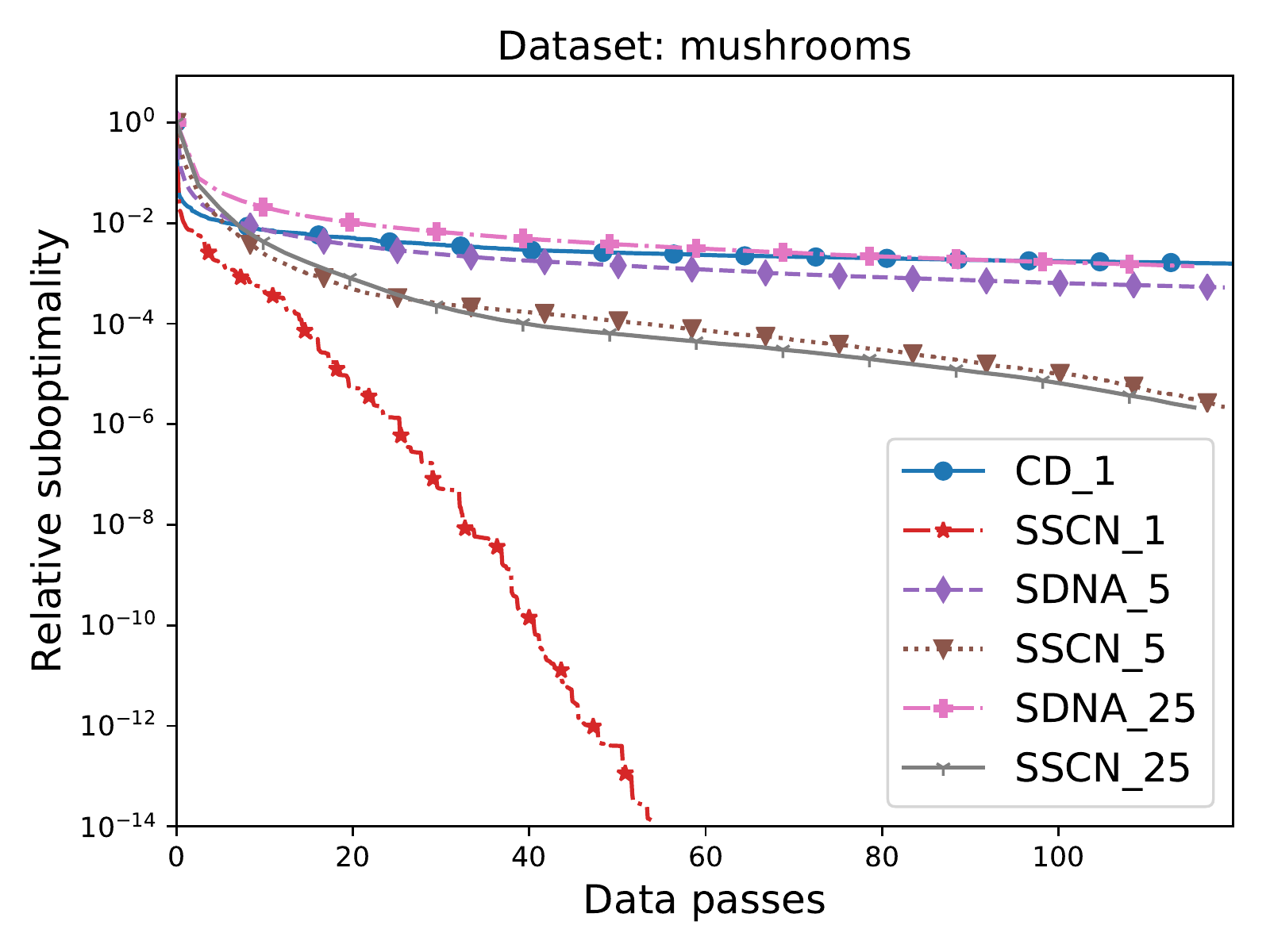}
\end{minipage}%
\\
\begin{minipage}{0.3\textwidth}
  \centering
\includegraphics[width =  \textwidth ]{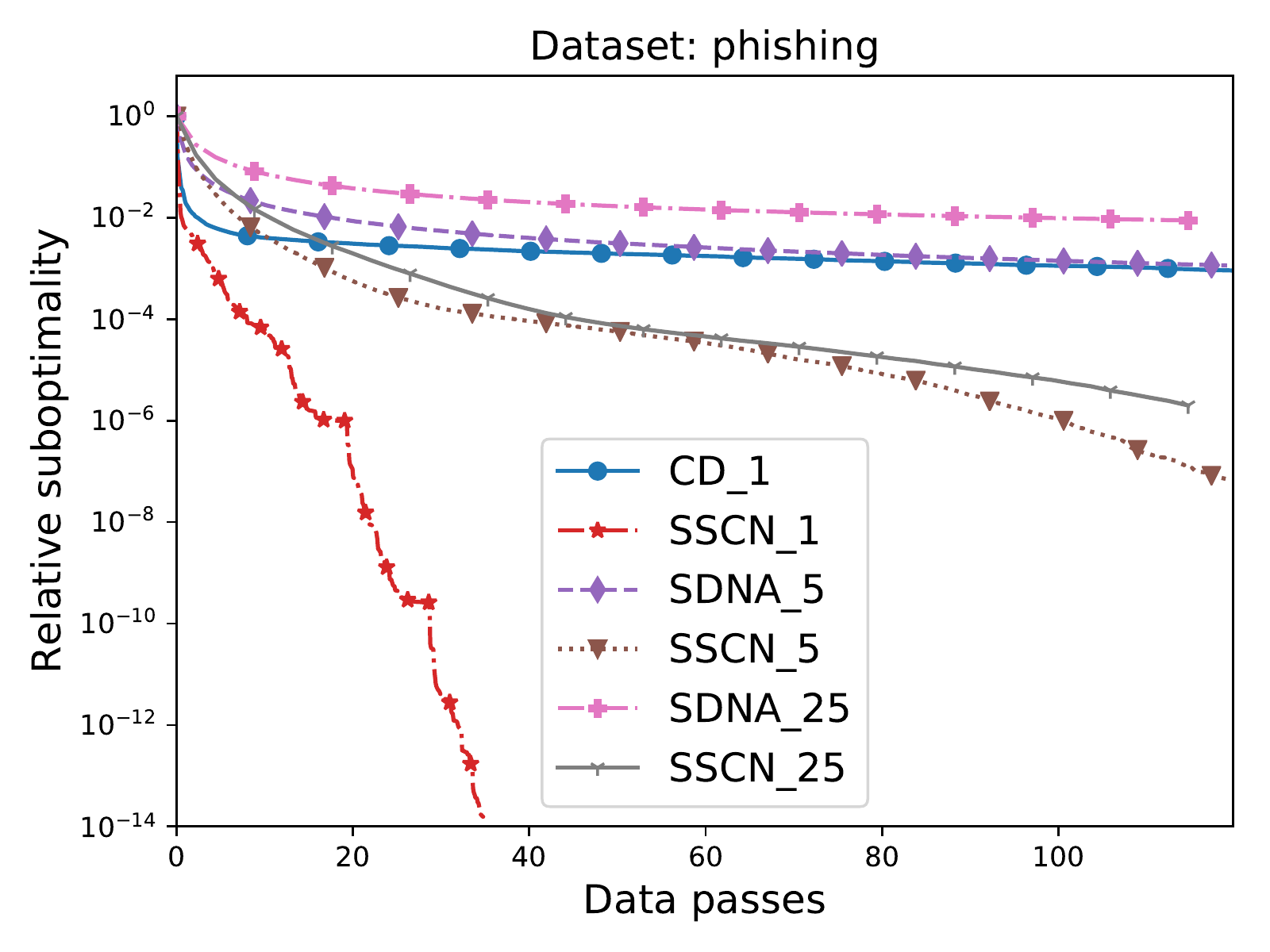}
\end{minipage}%
\begin{minipage}{0.3\textwidth}
  \centering
\includegraphics[width =  \textwidth ]{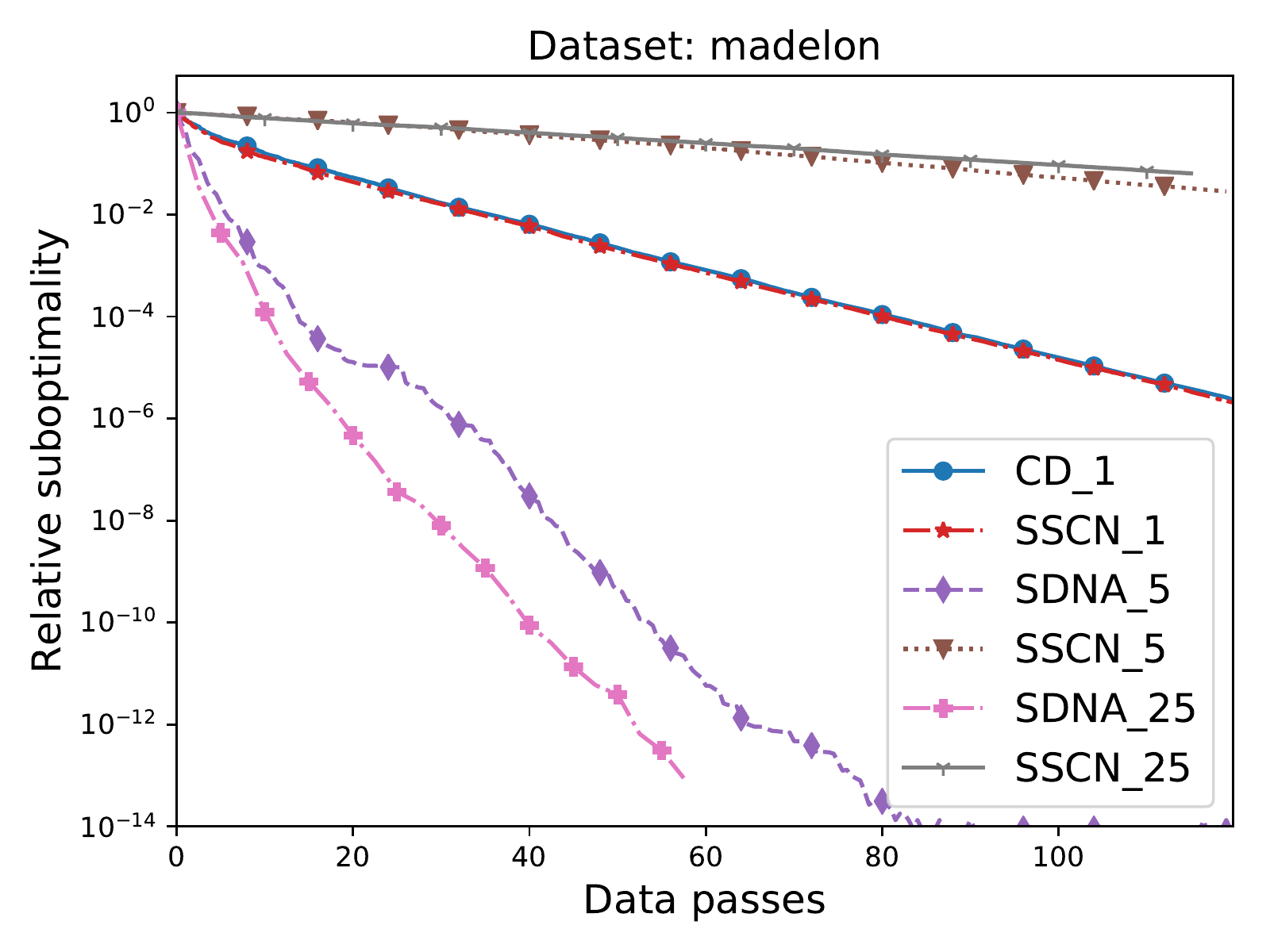}
\end{minipage}%
\begin{minipage}{0.3\textwidth}
  \centering
\includegraphics[width =  \textwidth ]{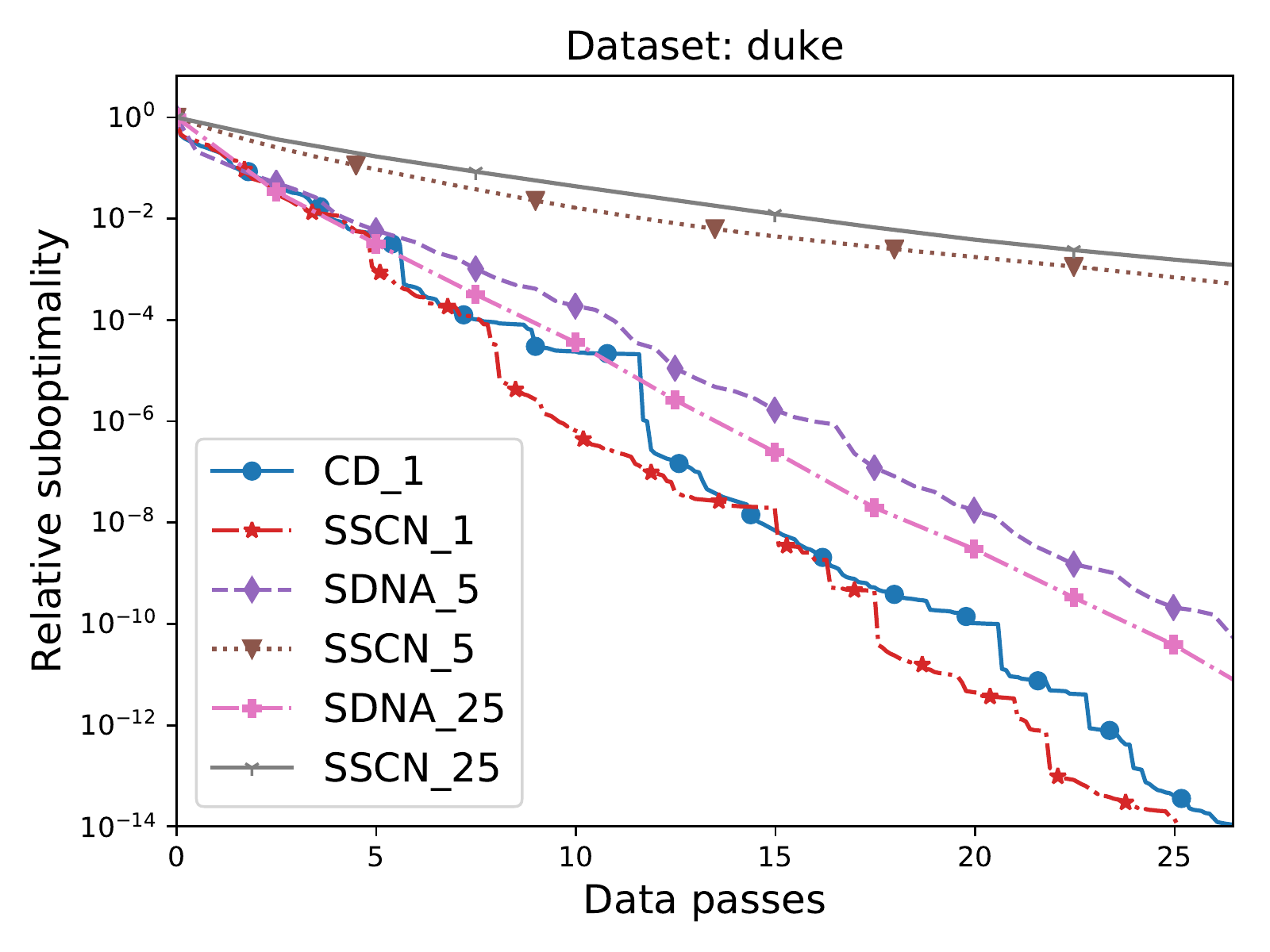}
\end{minipage}%
\\
\begin{minipage}{0.3\textwidth}
  \centering
\includegraphics[width =  \textwidth ]{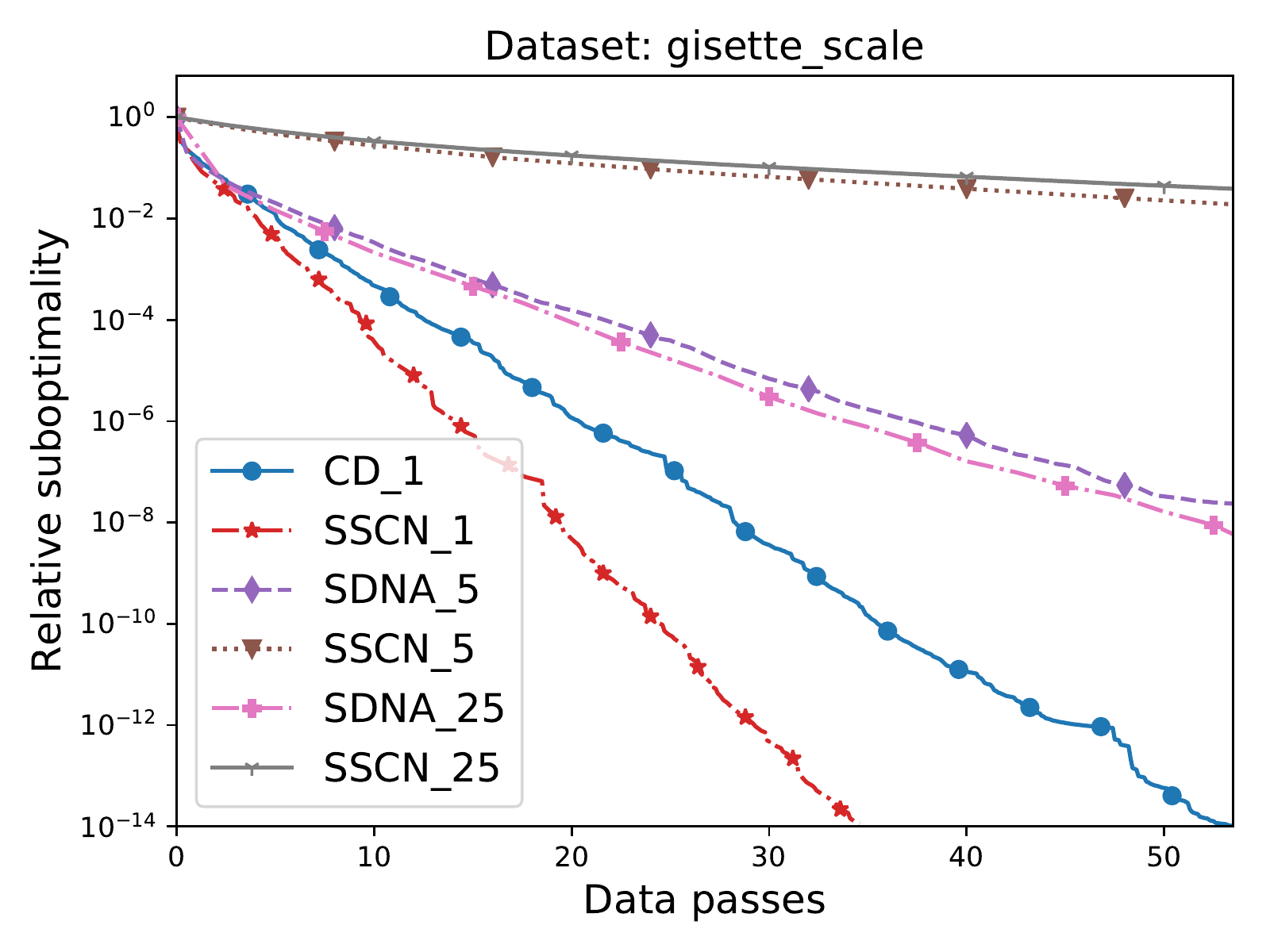}
\end{minipage}%
\begin{minipage}{0.3\textwidth}
  \centering
\includegraphics[width =  \textwidth ]{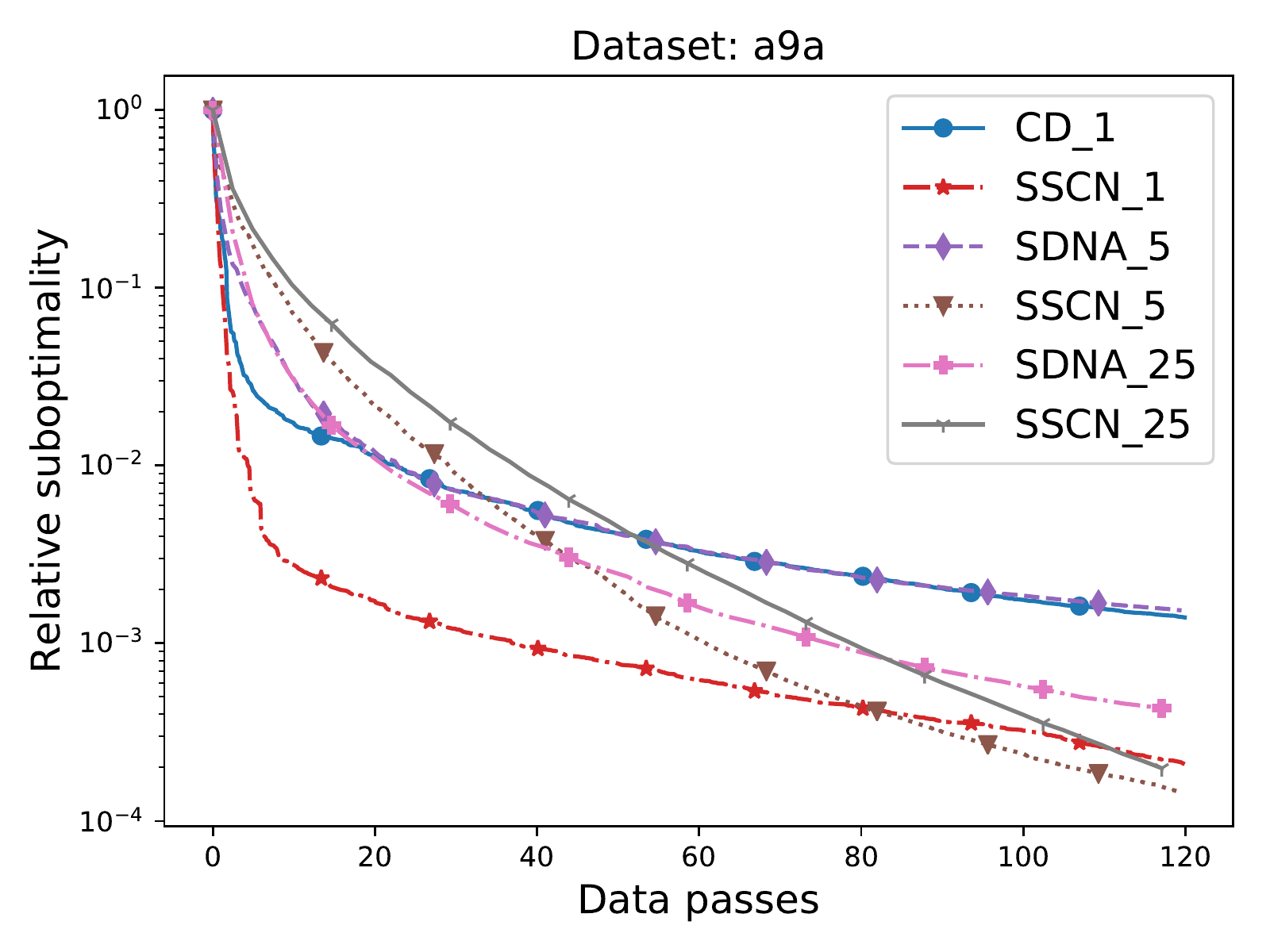}
\end{minipage}%
\begin{minipage}{0.3\textwidth}
  \centering
\includegraphics[width =  \textwidth ]{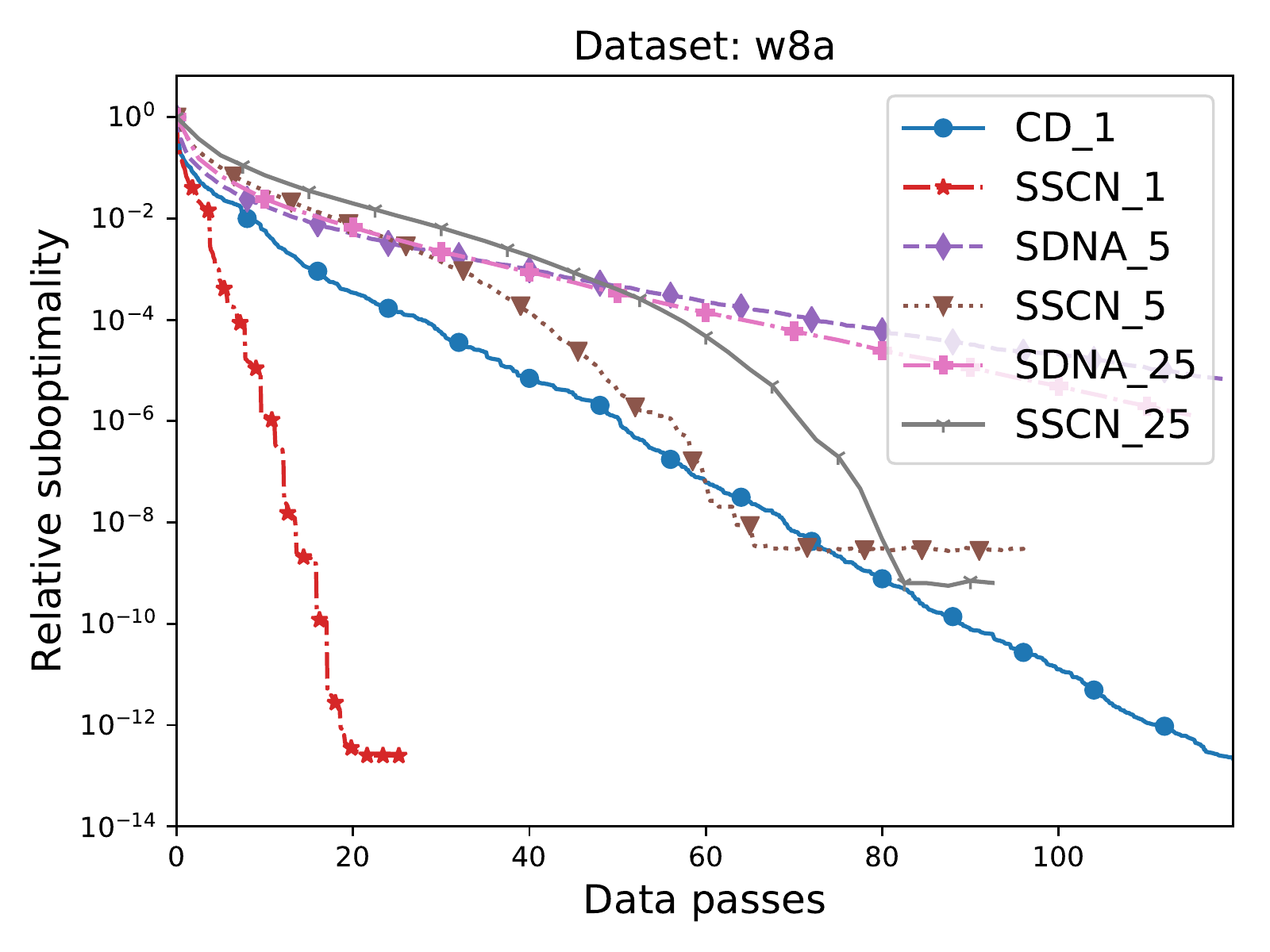}
\end{minipage}%
\caption{SSCN vs. SDNA on LibSVM datasets. All algorithms with uniform sampling.} 
\label{fig:libsvm_sdna}
\end{figure}

\subsection{Soft Maximum (Log-Sum-Exp)}

In this section, let us consider unconstrained minimization of the following Log-Sum-Exp function
$$
\ba{rcl}
f(x) & = & \displaystyle \sigma \log\left( \sum\limits_{i = 1}^n \exp \left( \frac{\la a_i, x \ra - b_i}{\sigma} \right) \right),
\qquad x \in \R^d,
\ea
$$
where $\sigma > 0$ is a \textit{smoothing} parameter, while
$a_i \in \R^d$, $1 \leq i \leq n$ and $b \in \R^n$ are given data.
This function has both Lipschitz continuous gradient and Lipschitz continuous Hessian
(see Example~1 in~\citep{doikov2019minimizing}). 

In our experiments, we first generate randomly elements of $\{ \tilde{a}_i \}_{i = 1}^n$ and $b$ from
uniform distribution on $[-1, 1]$. Then, we form an auxiliary function 
$\tilde{f}(x) := \sigma \log\Bigl( \sum\limits_{i = 1}^n \exp \bigl(  \frac{\la \tilde{a}_i, x \ra - b_i}{\sigma} \bigr) \Bigr)$,
using these parameters, and set
$$
\ba{rcl}
a_i & := & \tilde{a}_i - \nabla \tilde{f}(0), \quad 1 \leq i \leq n.
\ea
$$
Thus, we essentially obtain the optimum $x^{*}$ of $f$ in the origin, since $\nabla f(0) = 0$.

We use $x_0 \eqdef e$ (vector of all ones) as a starting point,
and always set $n \eqdef 6d$.

For this problem, we compare the performance of SSCN with the first-order Coordinate Descent (CD), using uniform samples of coordinates $S \subseteq [d]$ of a fixed size $\tau = |S|$. 

Note, that keeping scalar products $\{ \la a_i, x_k \ra \}_{i = 1}^n$
precomputed for a current point $x_k$, we are able to compute the partial gradient
$\nabla_{\mS} f(x^k)$ in time $O(\tau n)$ and the partial Hessian $\nabla^2_{\mS} f(x^k)$ in time $O(\tau^2 n)$.
To find the next direction $h^k$ of SSCN (solving the Cubic subproblem), we call Nonlinear Conjugate Gradient method,
and use the following condition as a stopping criterion:
$$
\ba{rcl}
\| \nabla_h T_{\mS}(x^k; h^k) \| & \leq & 10^{-4},
\ea
$$
where $T_{\mS}(x^k; h)  := \la \nabla_{\mS} f(x^k), h \ra + \frac{1}{2}\la \nabla_{\mS}^2 f(x^k)h, h \ra + \frac{M_k}{6}\|\mS h\|^3$ is the Cubic model, and $M_k \geq 0$ is a regularization constant.

For both methods, we use one-dimensional search at every iteration, to fit the corresponding parameter:
\begin{enumerate}
	\item For the Coordinate Descent, we find $L_k$ such that $f(x^{k}) - f(x^{k + 1}) \geq \frac{1}{2L_k}\| \nabla_{\mS} f(x^k) \|^2$, 
	where $x^{k + 1}$ is the next point of the method: $x^{k + 1} = x^k + \frac{1}{L_k}\mS \nabla_{\mS} f(x^k)$.
	
	\item For SSCN, we find $M_k$ such that~\eqref{eq:coordinate_ub_full}
	is satisfied, i.e., $f(x^{k}) - f(x^{k + 1}) \geq -T_{\mS}(x^k, h^k)$.
\end{enumerate}
Therefore,  we need to evaluate the function value inside the procedure, which is not very expensive.

The results are shown on Figures~\ref{fig:log_sum_exp_500},\ref{fig:log_sum_exp_1000},
for $d = 500$ and $1000$ respectively\footnote{Clock time was evaluated using the machine with Intel Xeon Gold 6146 CPU, 3.20GHz; 251 GB RAM.}.
We see, that SSCN outperforms CD significantly in terms of the iteration rate.
For SSCN with a medium batchsize $\tau$, we may obtain the best performance 
in terms of the total computational time.

\begin{figure}[h!]
	\centering
	\begin{minipage}{0.23\textwidth}
		\centering
		\includegraphics[width =  \textwidth ]{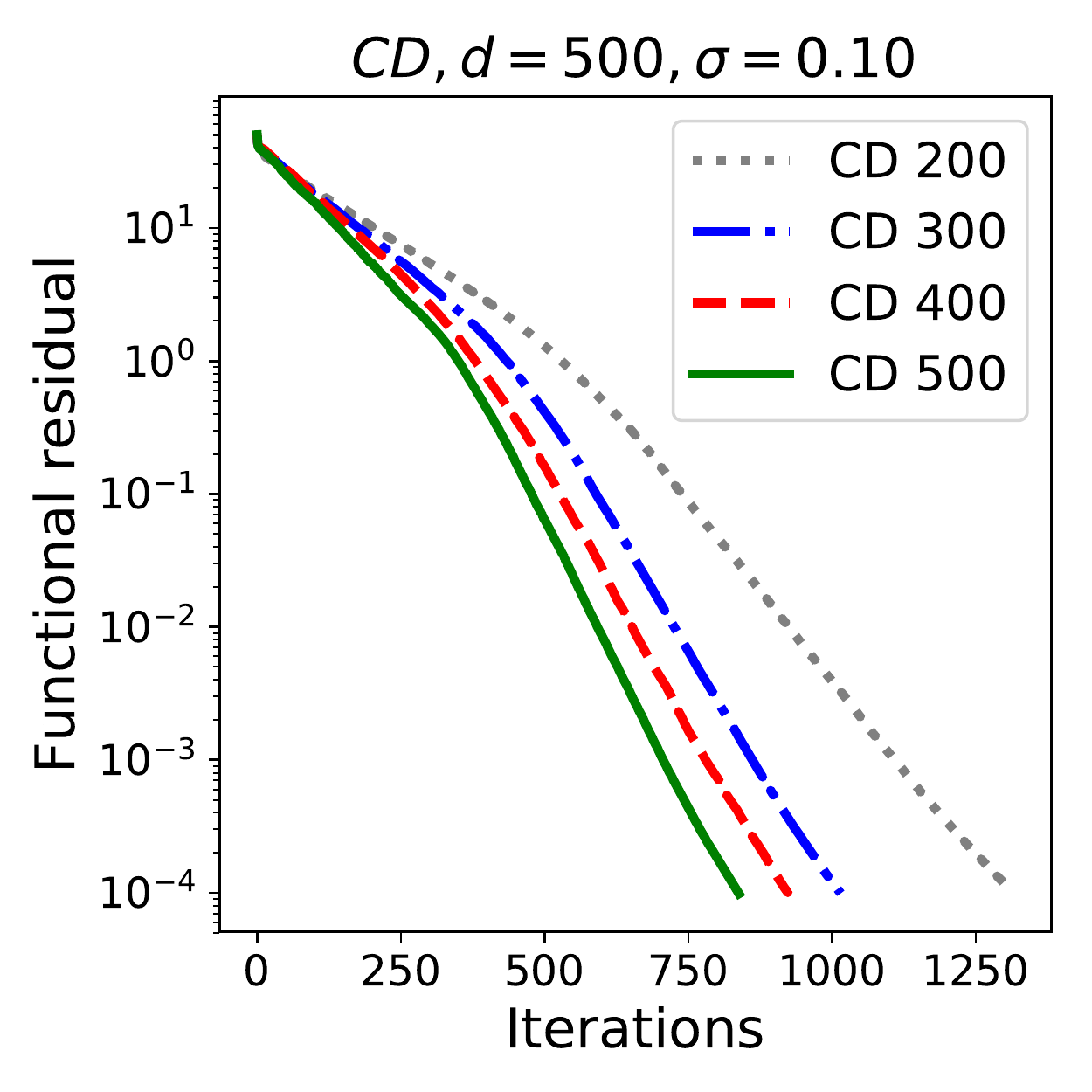}
	\end{minipage}
	\begin{minipage}{0.23\textwidth}
		\centering
		\includegraphics[width =  \textwidth ]{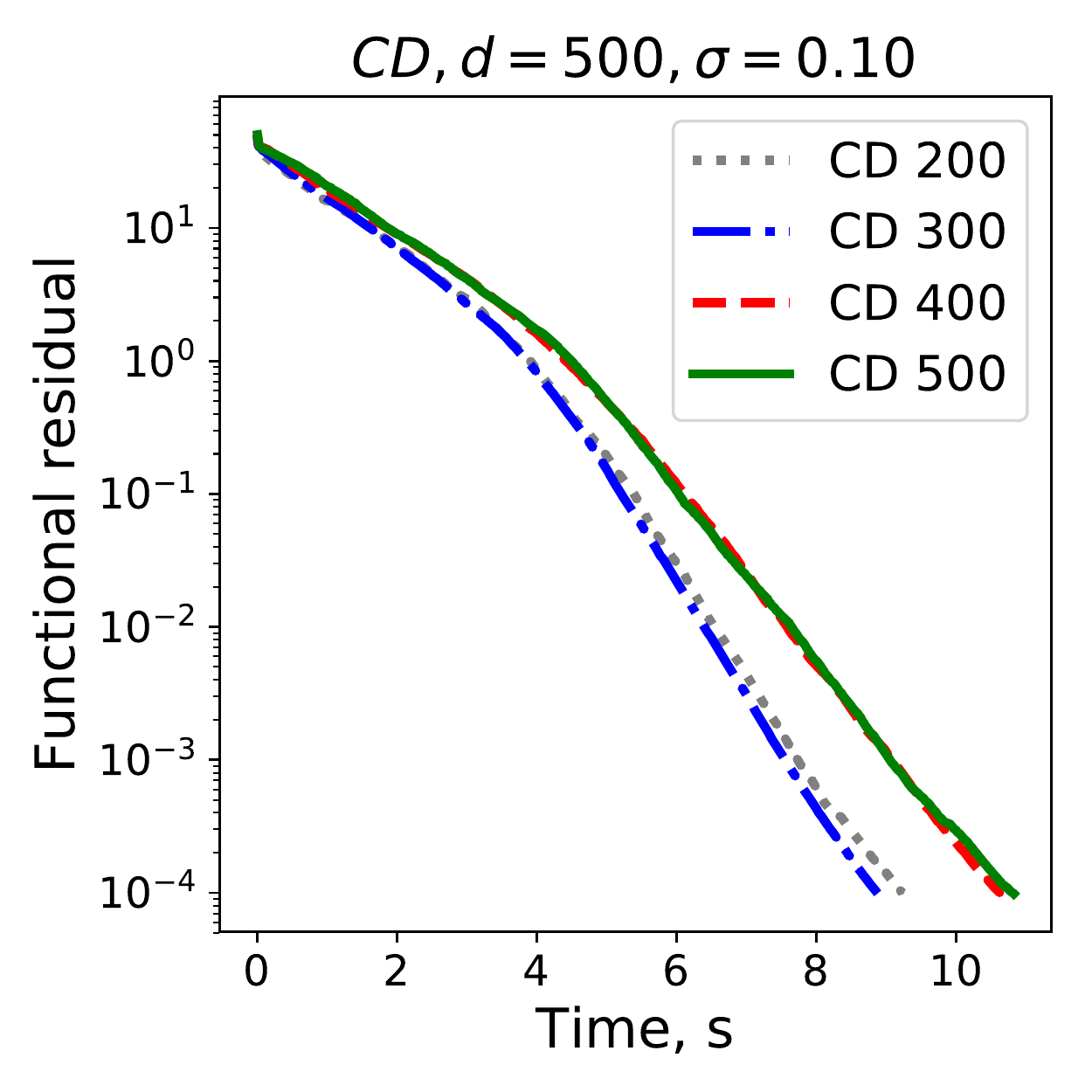}
	\end{minipage}
	\begin{minipage}{0.23\textwidth}
		\centering
		\includegraphics[width =  \textwidth ]{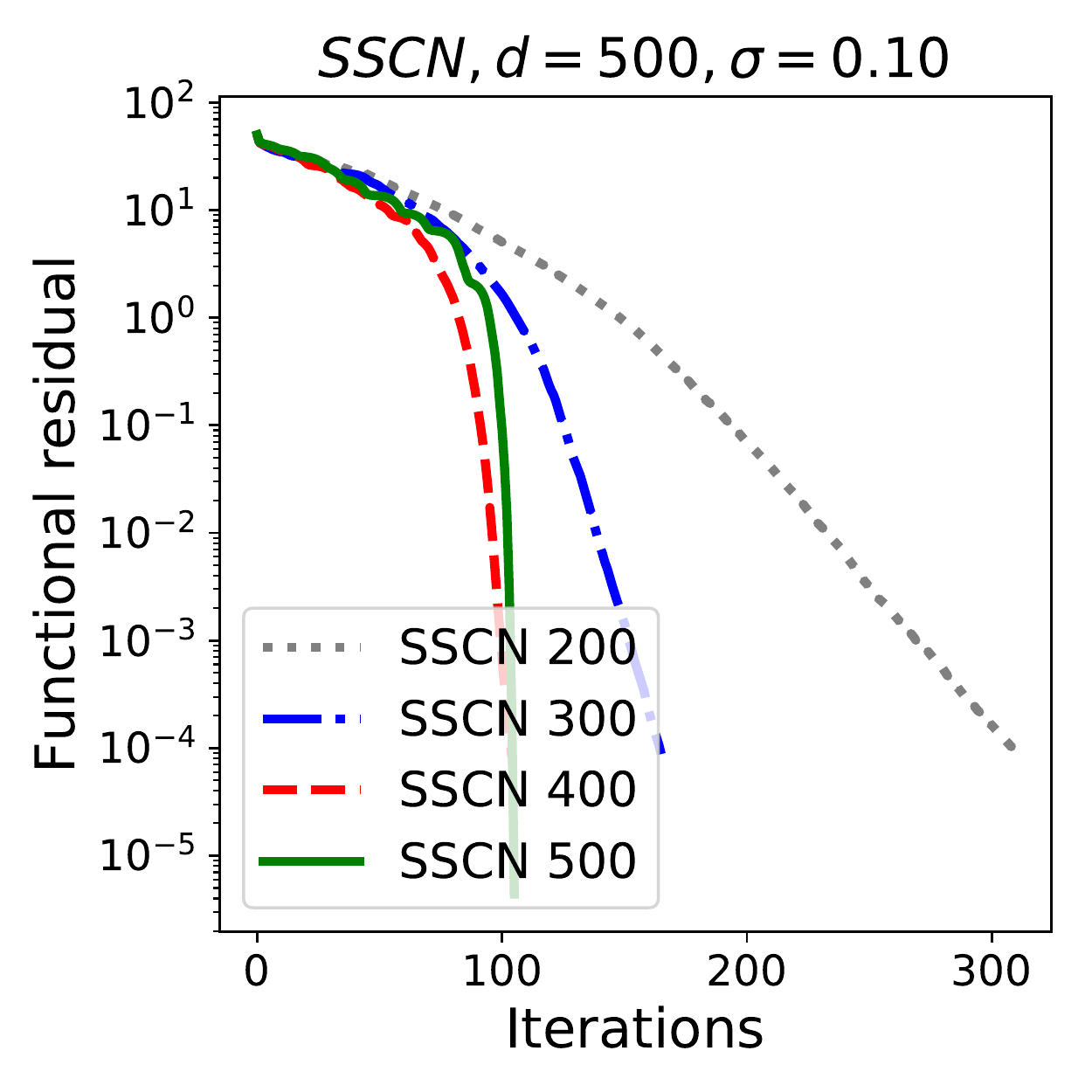}
	\end{minipage}
	\begin{minipage}{0.23\textwidth}
		\centering
		\includegraphics[width =  \textwidth ]{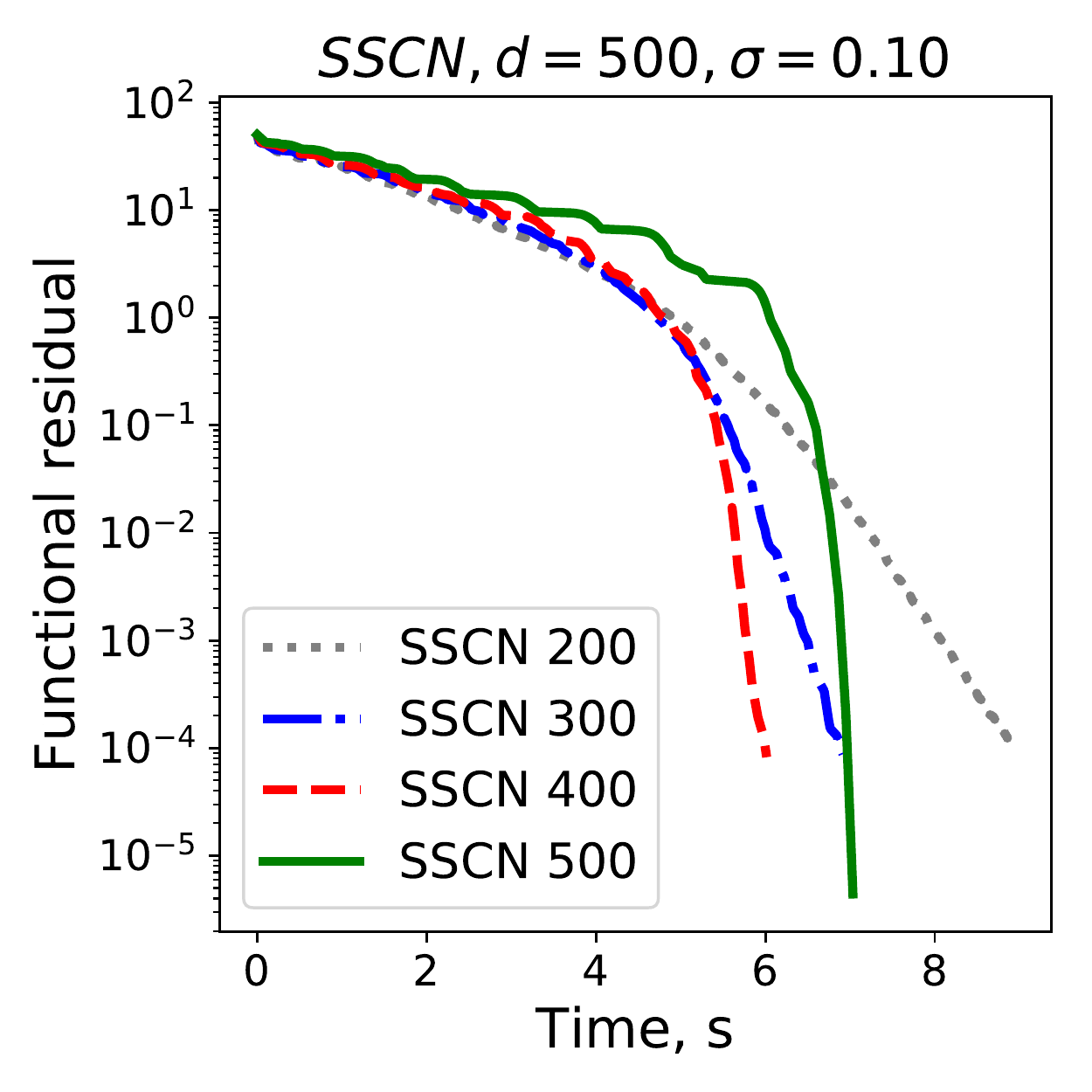}
	\end{minipage}

		\begin{minipage}{0.23\textwidth}
		\centering
		\includegraphics[width =  \textwidth ]{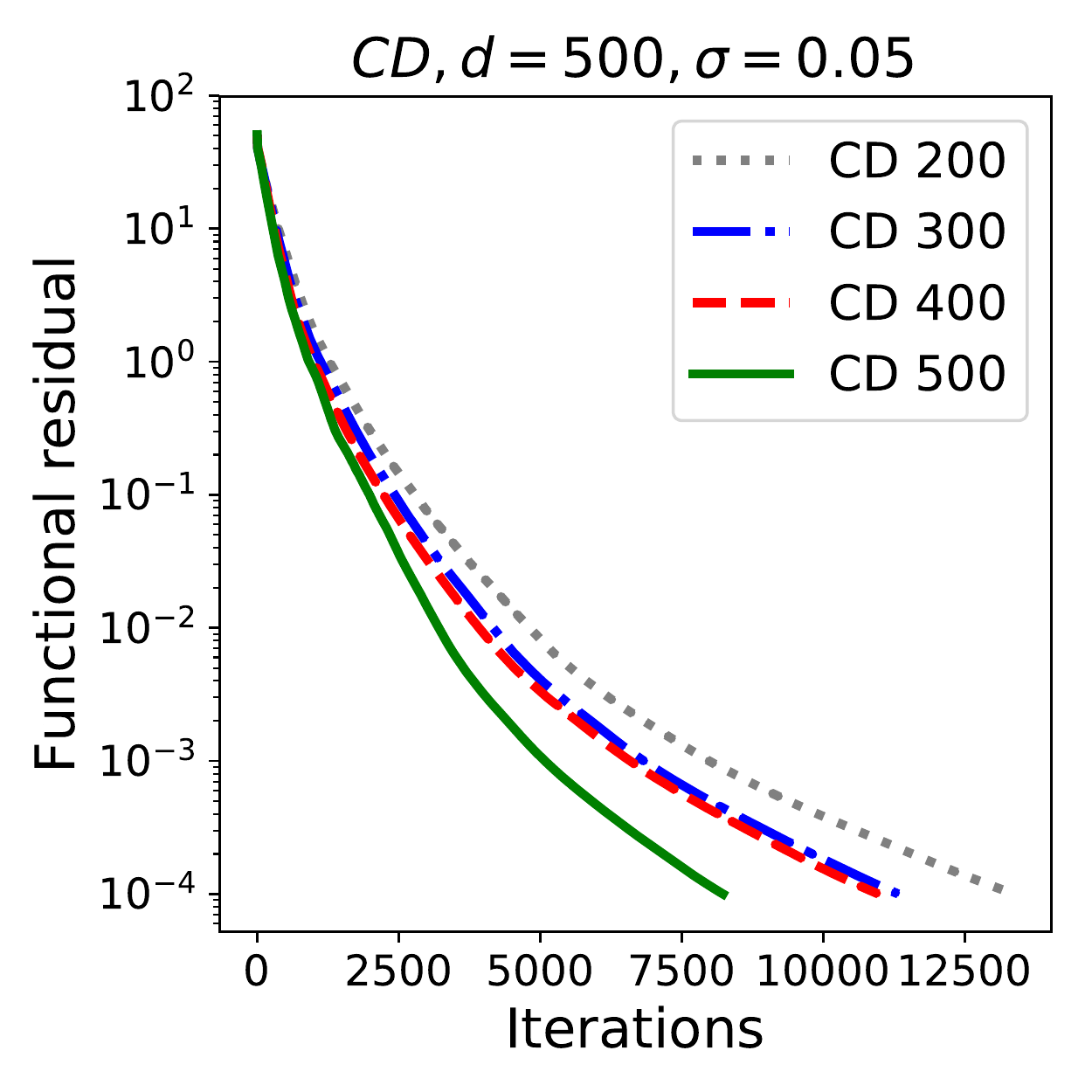}
	\end{minipage}
	\begin{minipage}{0.23\textwidth}
		\centering
		\includegraphics[width =  \textwidth ]{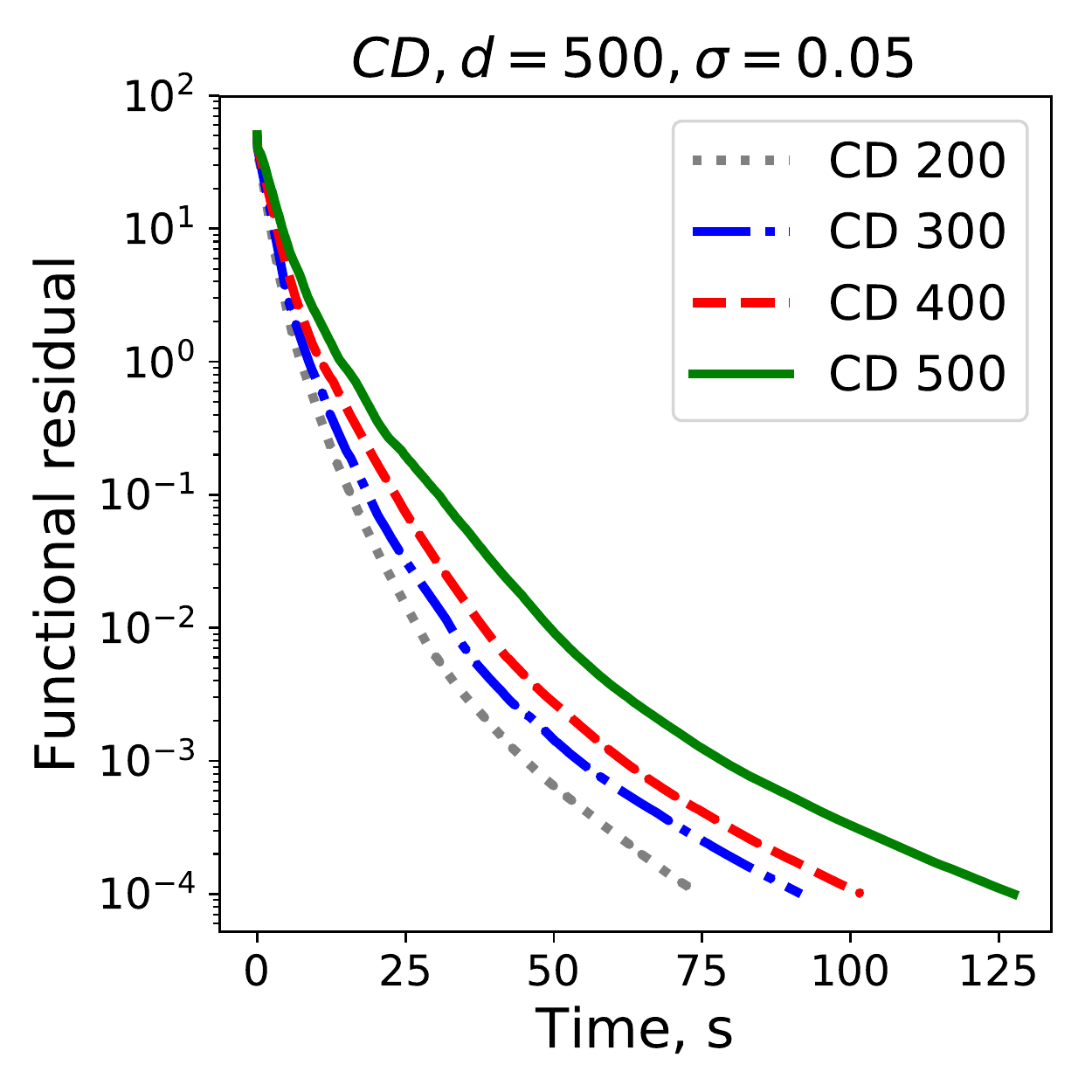}
	\end{minipage}
	\begin{minipage}{0.23\textwidth}
		\centering
		\includegraphics[width =  \textwidth ]{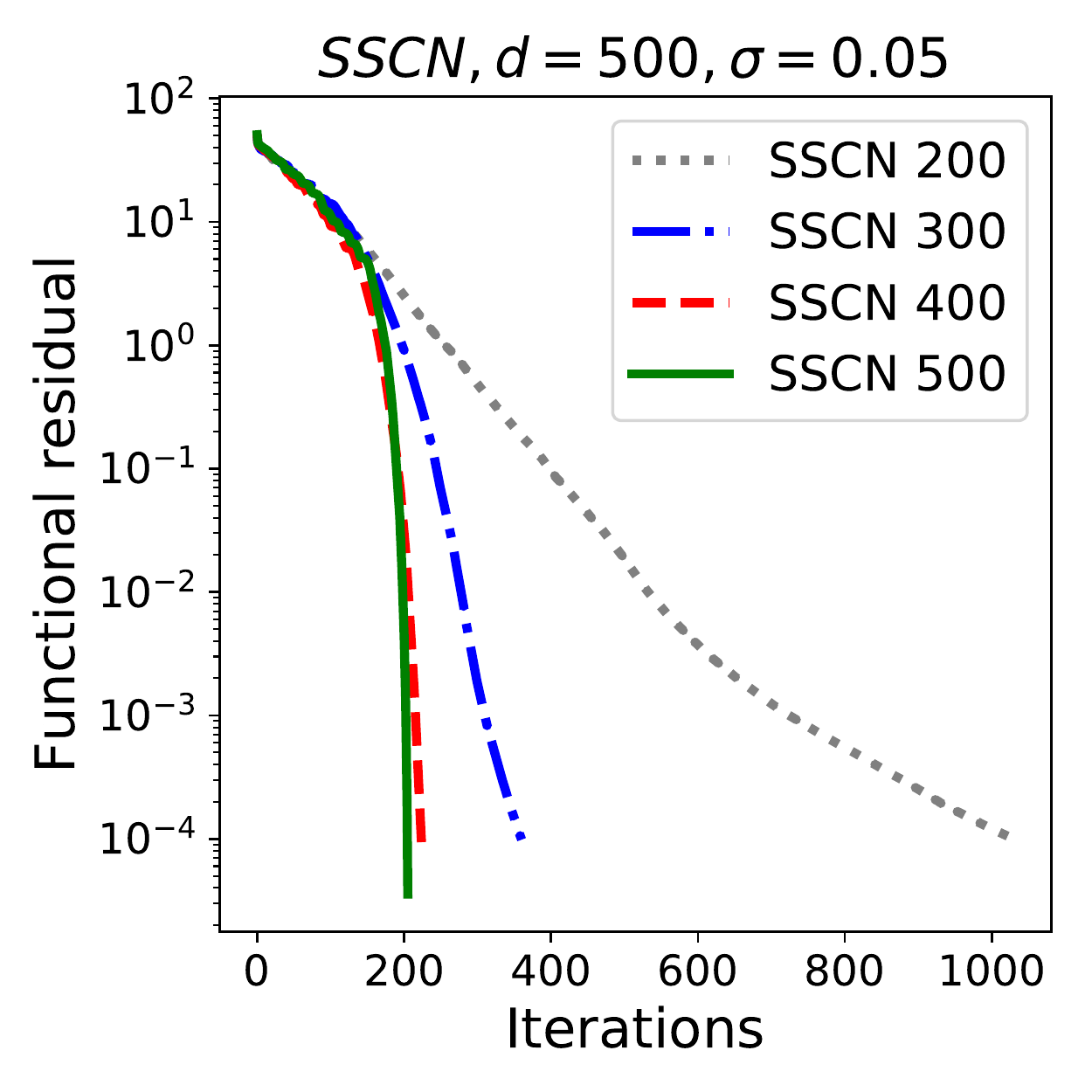}
	\end{minipage}
	\begin{minipage}{0.23\textwidth}
		\centering
		\includegraphics[width =  \textwidth ]{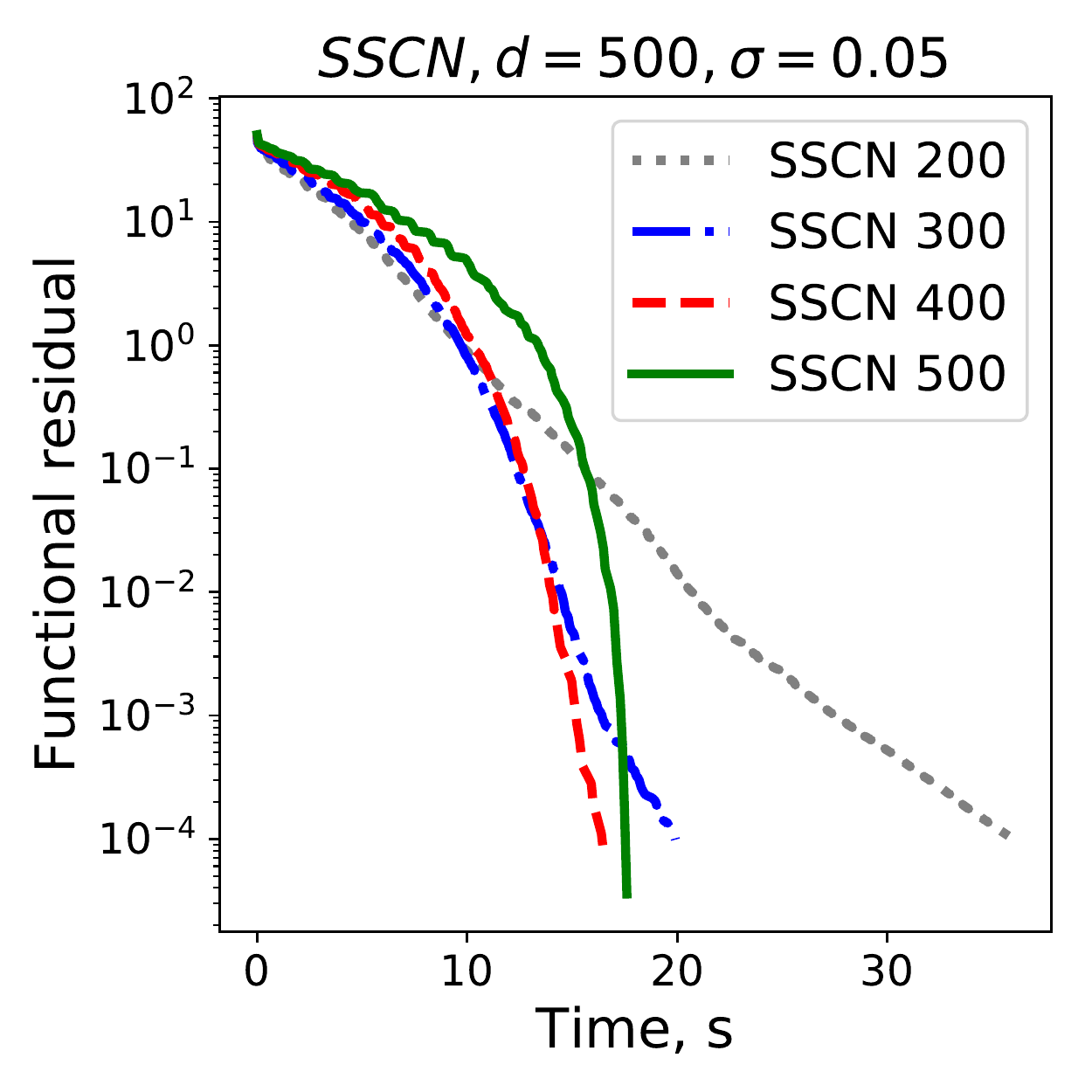}
	\end{minipage}

	\caption{SSCN and Coordinate Descent (CD) methods, minimizing Log-Sum-Exp function, $d = 500$.} 
	\label{fig:log_sum_exp_500}
\end{figure}

\begin{figure}[h!]
	\centering
	\begin{minipage}{0.23\textwidth}
		\centering
		\includegraphics[width =  \textwidth ]{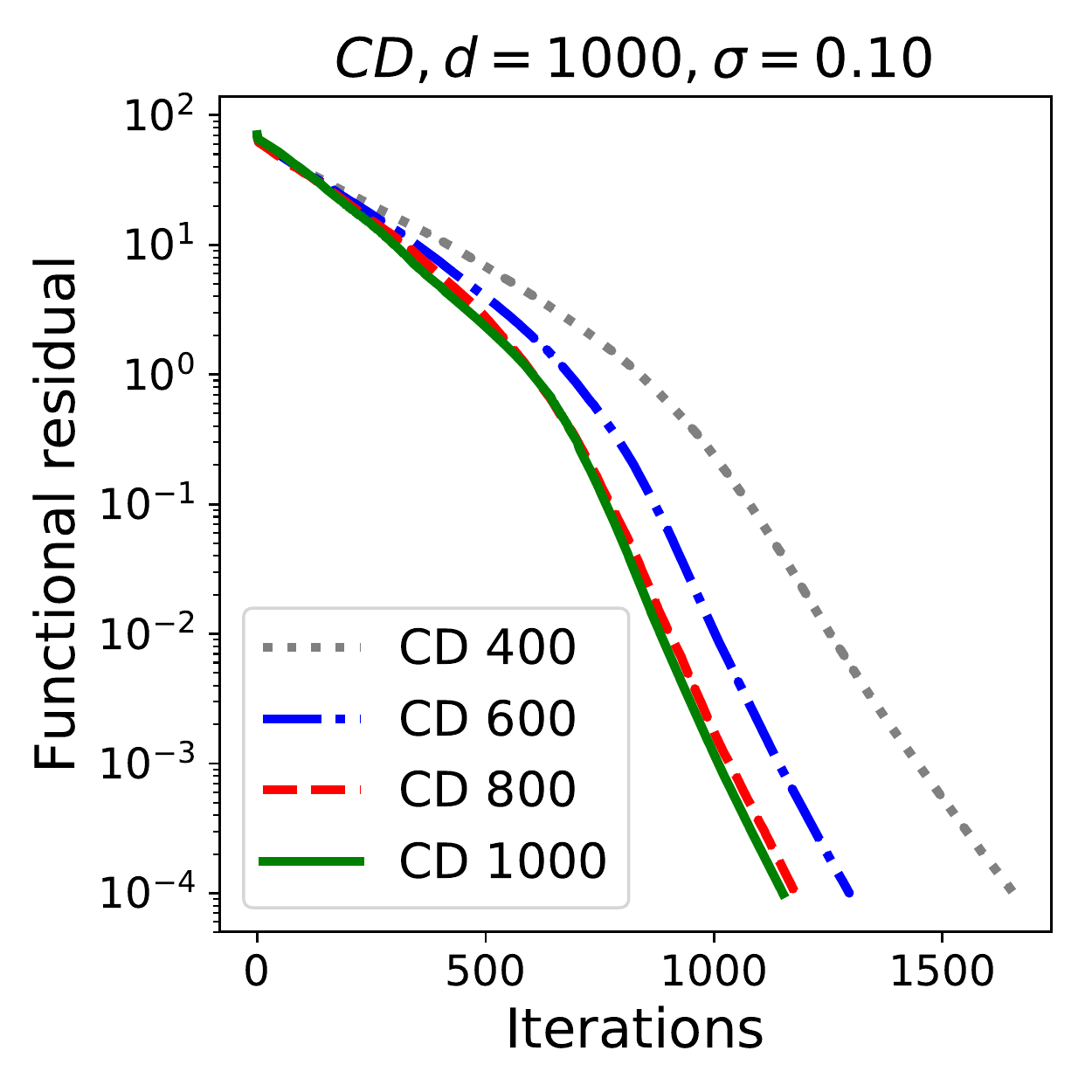}
	\end{minipage}
	\begin{minipage}{0.23\textwidth}
		\centering
		\includegraphics[width =  \textwidth ]{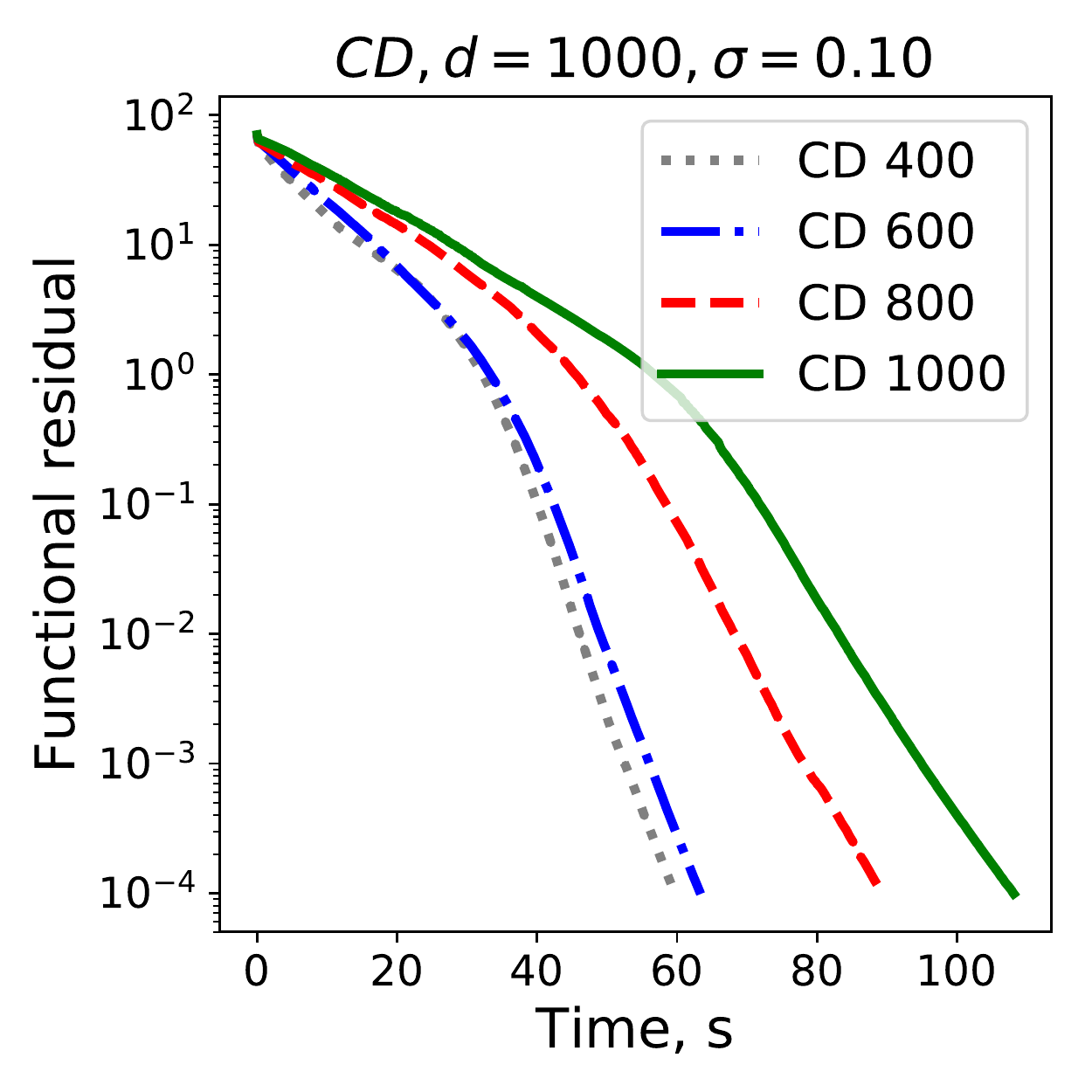}
	\end{minipage}
	\begin{minipage}{0.23\textwidth}
		\centering
		\includegraphics[width =  \textwidth ]{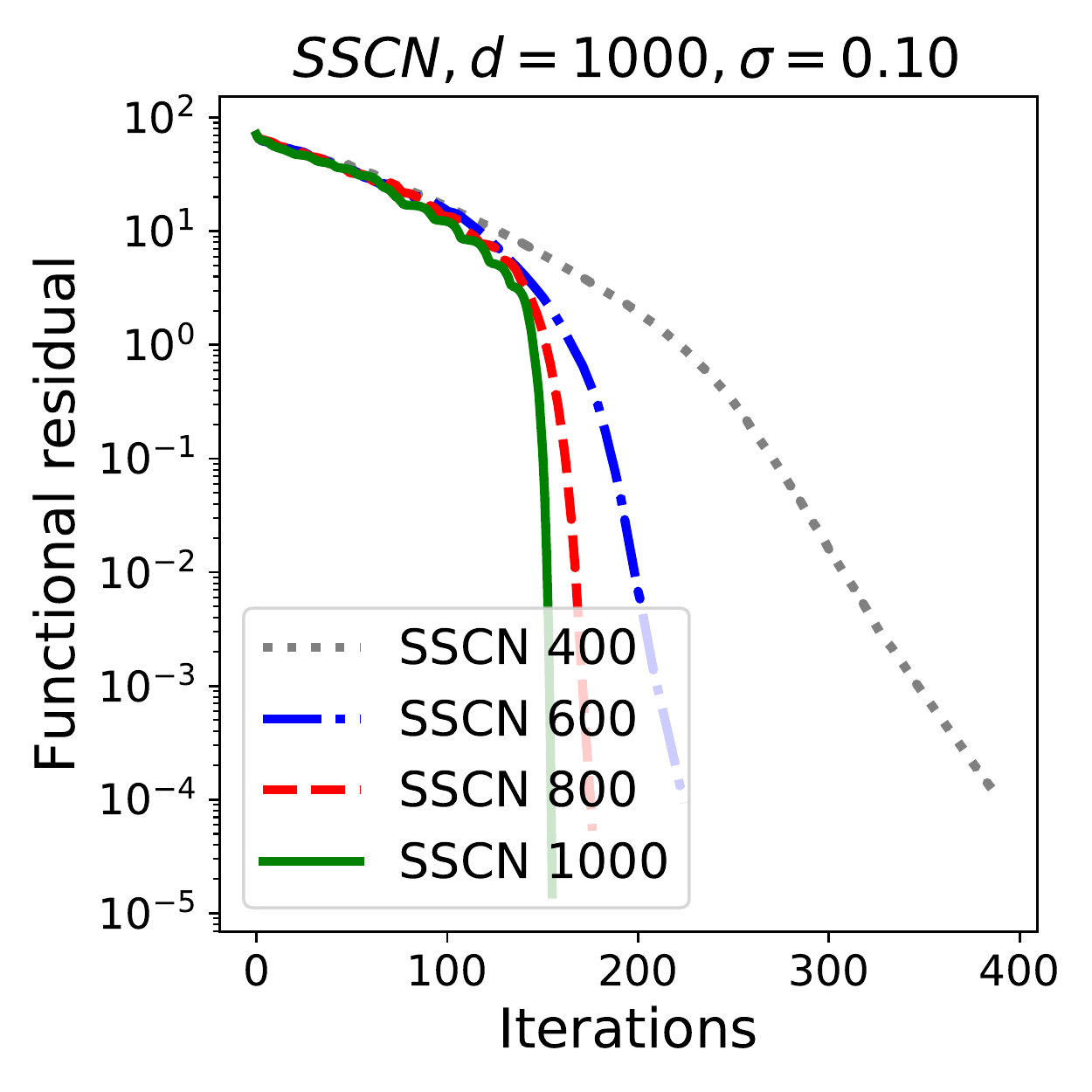}
	\end{minipage}
	\begin{minipage}{0.23\textwidth}
		\centering
		\includegraphics[width =  \textwidth ]{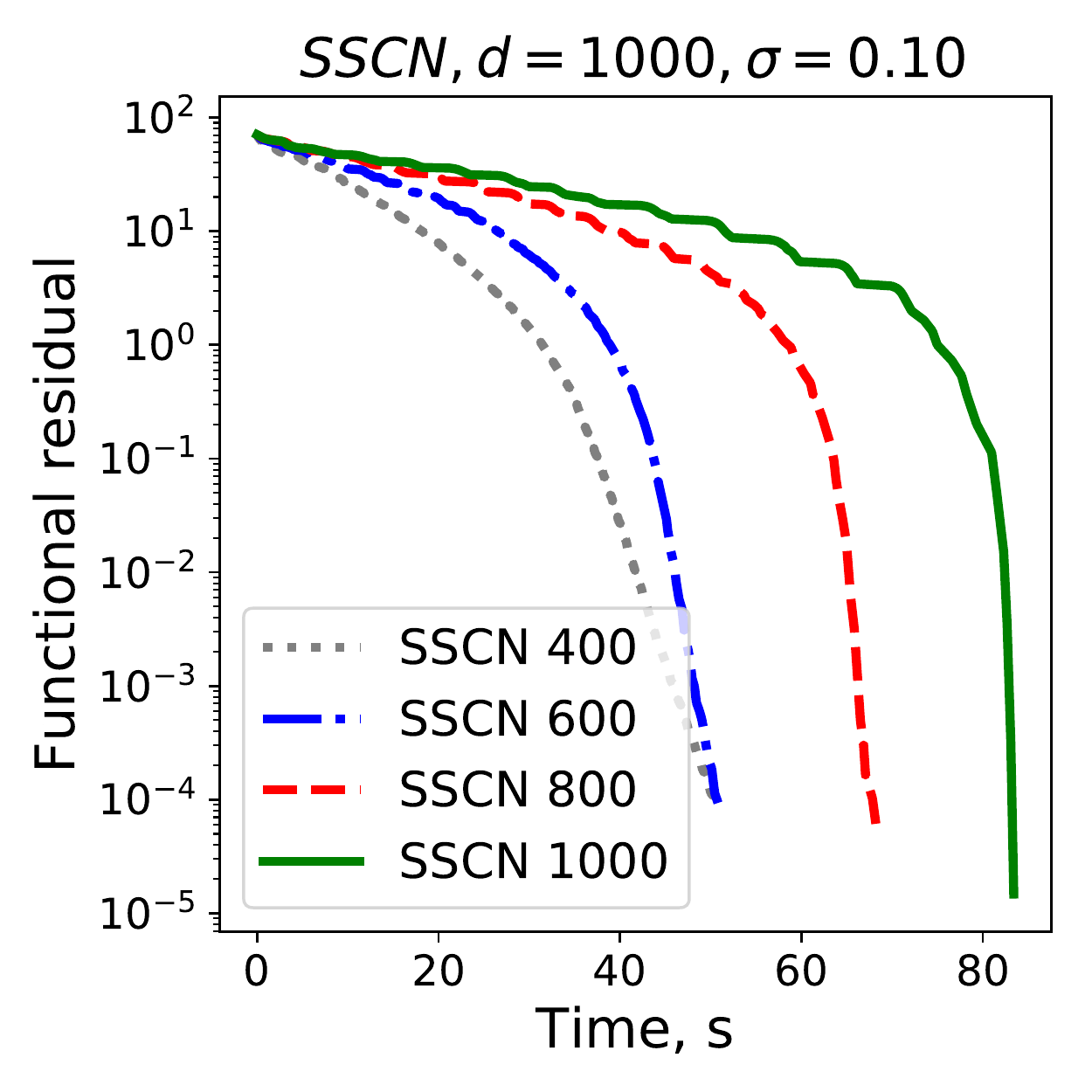}
	\end{minipage}
	
	\begin{minipage}{0.23\textwidth}
		\centering
		\includegraphics[width =  \textwidth ]{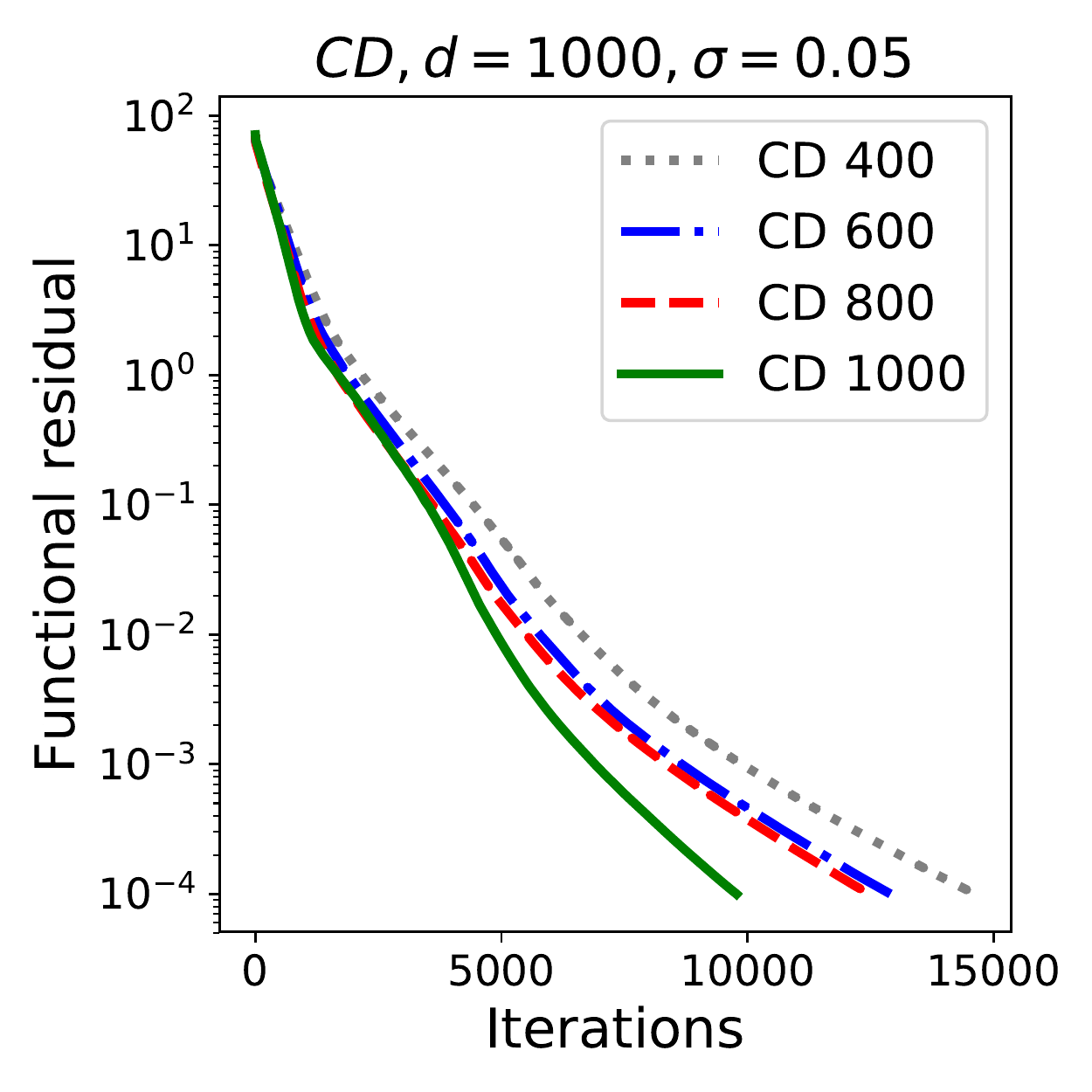}
	\end{minipage}
	\begin{minipage}{0.23\textwidth}
		\centering
		\includegraphics[width =  \textwidth ]{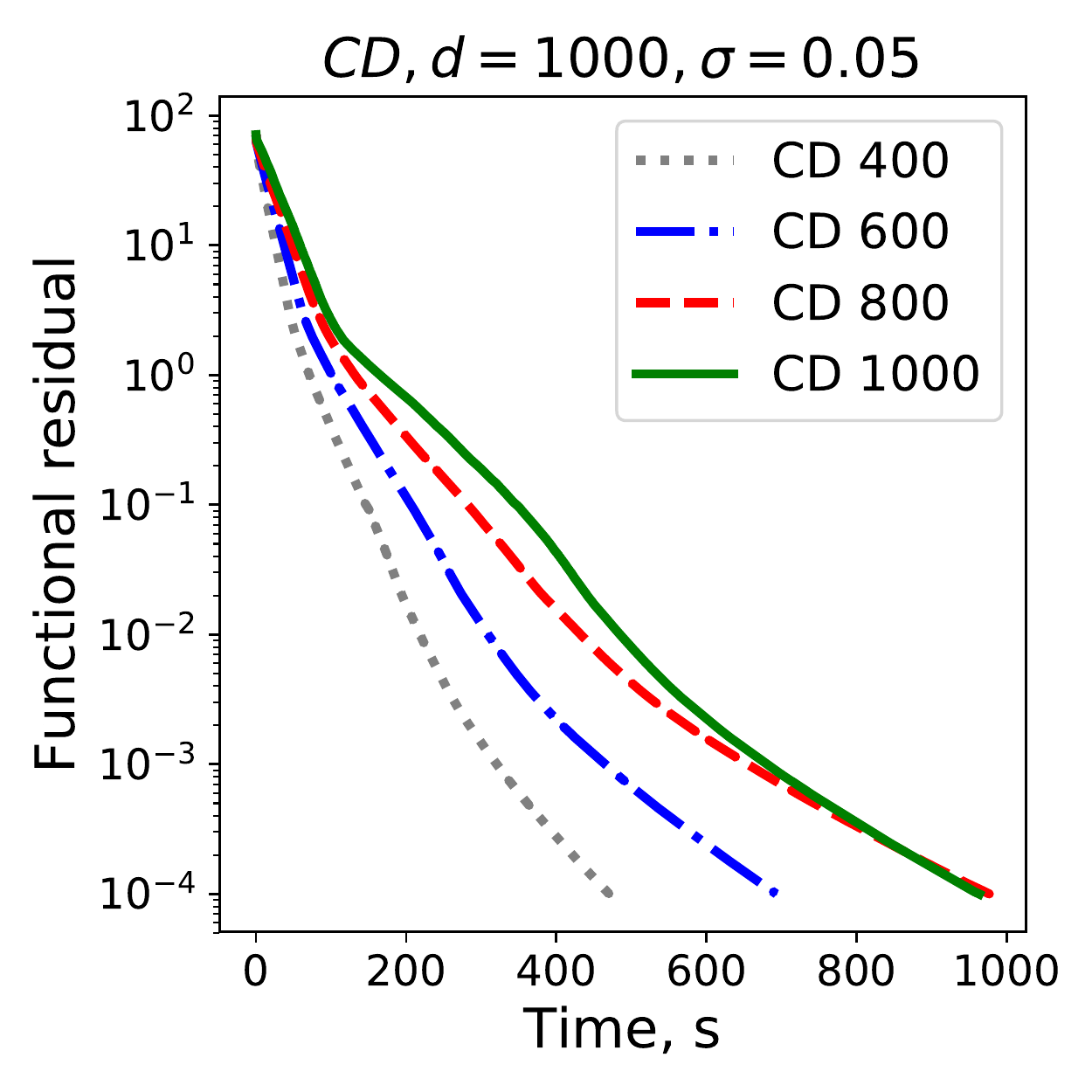}
	\end{minipage}
	\begin{minipage}{0.23\textwidth}
		\centering
		\includegraphics[width =  \textwidth ]{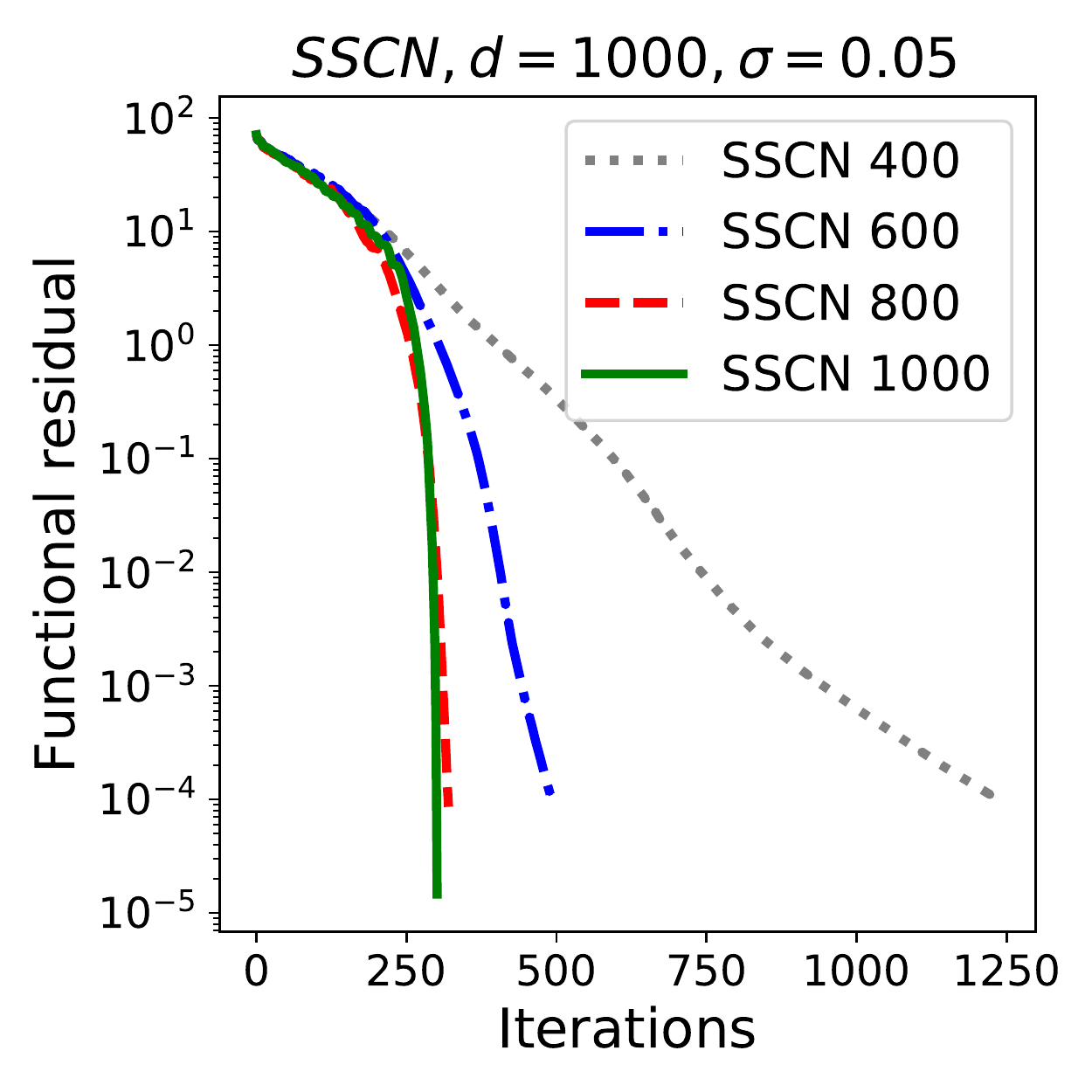}
	\end{minipage}
	\begin{minipage}{0.23\textwidth}
		\centering
		\includegraphics[width =  \textwidth ]{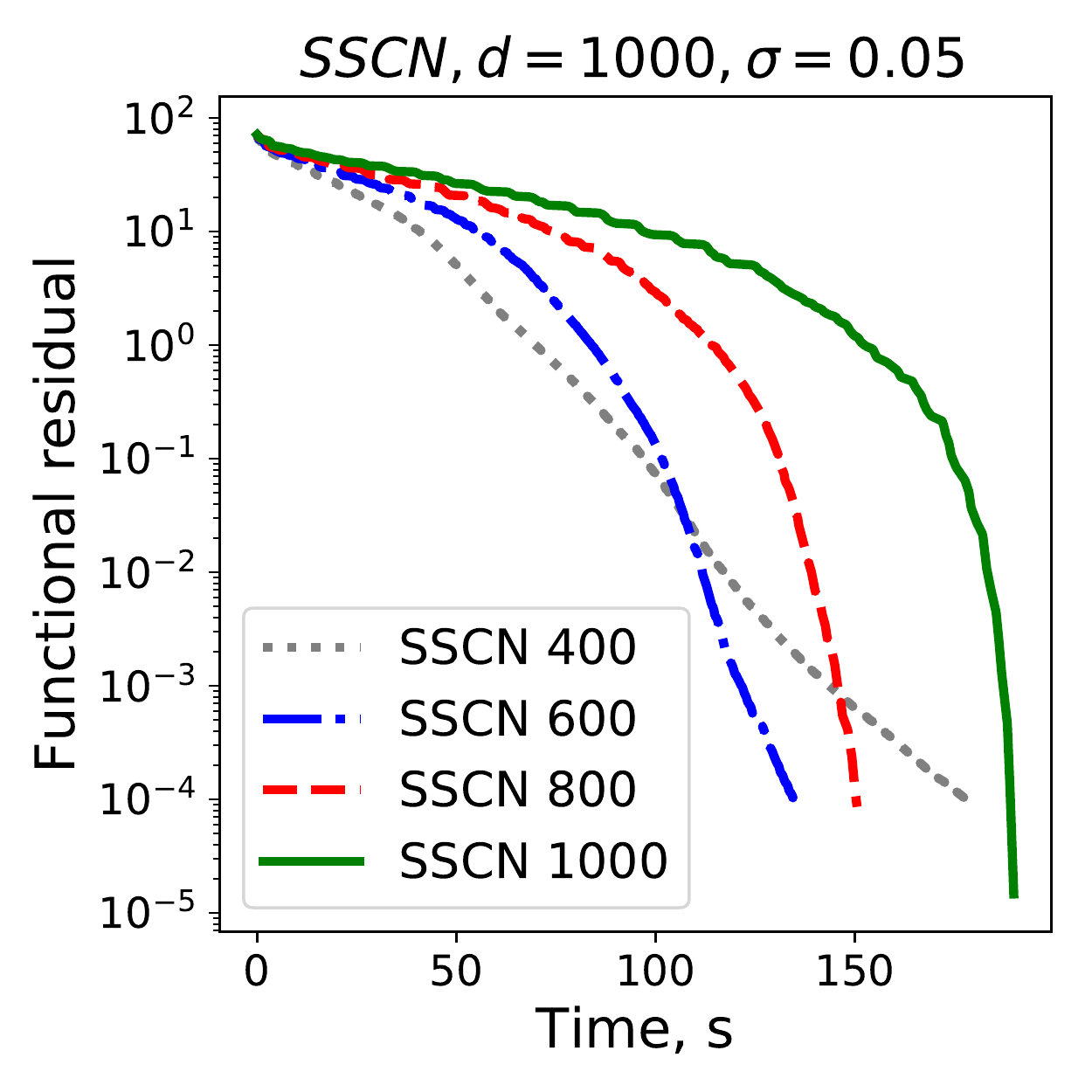}
	\end{minipage}

	\caption{SSCN and Coordinate Descent (CD) methods, minimizing Log-Sum-Exp function, $d = 1000$.} 
	\label{fig:log_sum_exp_1000}
\end{figure}

\section{Future Work}

Lastly, we list several possible extensions of our work.
\paragraph{Acceleration.} We believe it would be valuable to incorporate Nesterov's momentum into Algorithm~\ref{alg:crcd}. Ideally, one would like to get the global rate in between convergence rate of accelerated cubic regularized Newton~\citep{nesterov2008accelerating} and accelerated CD~\citep{allen2016even, nesterov2017efficiency}. On the other hand, the local rate (for strongly convex objectives) should recover accelerated sketch-and-project~\citep{tu2017breaking, gower2018accelerated}. If accelerated sketch-and-project is optimal (this is yet to be established), then accelerated SSCN (again, given that it recovers accelerated sketch-and-project) would be a locally optimal algorithm as well.

\paragraph{Non-separable $\psi$.} As mentioned in Section~\ref{sec:setup}, one should not hope for linear convergence of SSCN if $\psi$ is not separable, as the iterates can ``jump'' away from the optimum in such case. This issue has been resolved for first-order methods using control variates~\citep{hanzely2018sega}, resulting in SEGA. Therefore, the development of second-order SEGA remains an interesting open problem.

\paragraph{Inexact method.} SSCN is applicable in the setup, where function $f$ is accessible via zeroth-order oracle only. In such a case, for any $\mS \in \R^{\tau\times d}$ we can estimate $\nabla_{\mS} f(x)$ and $ \nabla_\mS^2 f(x)$ using $\cO(\tau^2)$ function value evaluations. However, since both $\nabla_{\mS} f(x)$ and $ \nabla_\mS^2 f(x)$ are only evaluated inexactly, a slight modification of our theory is required.

\paragraph{Non-uniform sampling.} Note that our local theory allows for arbitrary non-uniform distribution of $\mS$, which might be potentially exploited. At the same time, in some applications, it might be feasible to use a greedy selection rule for $\mS$ (our theory does not support that).

While developing optimal and implementable importance sampling for the local convergence is beyond the scope of this paper,\footnote{As this is still an open problem even for sketch-and-project~\citep{gower2015randomized}. }
we sketch several possible sampling strategies that might yield faster convergence.\footnote{This only applies to the local results as the global convergence requires some uniformity; see Assumption~\ref{as:uniform}.}

\begin{itemize}
\item Let $\Prob(\mS \in \{e_1, e_2, \dots, e_d \})=1$. If we evaluate the diagonal of the Hessian close to optimum (cost $\cO(nd)$ for linear models) and sample proportionally to it, we obtain local linear rate with leading complexity term $\frac{\Tr{\nabla^2 f(x^*)}}{\lambda_{\min}\nabla^2 f(x^*)}$. 

\item It is unclear how to design an efficient importance sampling for minibatch (i.e., $1<\E{\tau(\mS)}<d$) methods. Determinantal point processes (DPP)~\citep{rodomanov2019randomized, mutny2019convergence} were proposed to speed up SDNA from~\citep{qu2016sdna} (i.e., analogous CD with static matrix upper bound) -- we thus believe they might be applicable on our setting too. However, in such a case, one would need to evaluate the whole Hessian close to optimum, which is infeasible for applications where $d$ is large.

\item It is known that SDNA (see related literature) is faster than minibatch CD under the ESO assumption~\citep{qu2016coordinate1, qu2016coordinate2}. Therefore, we might instead apply minibatch importance sampling for ESO assumption from~\citep{hanzely2018accelerated} (which corresponds to optimizing the upper bound on iteration complexity). Using the mentioned sampling, we only require evaluating the diagonal of Hessian at some point close to optimum, which is of the same cost as computing the full gradient for linear models -- thus is feasible.

\item It is a natural question to ask whether one can speed up the convergence using a greedy rule instead of the random one. For standard CD, greedy rule was shown to have a superior iteration complexity to any randomized rule~\citep{nutini2015coordinate, karimireddy2018efficient}. For simplicity, consider case where $\Prob(\mS \in \{e_1, e_2, \dots, e_d \})=1$. Far from the optimum, (approximate) greedy rule at iteration $k$ chooses index $i = \argmax_{j} | \nabla_j f(x^k)|^{\frac32}M_{e_j}^{-\frac12}$. Close to optimum, if a diagonal of a Hessian was evaluated, (approximate) greedy index would be $\argmax_{j} | \nabla_j f(x^k)|^{2}\nabla_{j,j} f(x)^{-1}$. For linear models, both of the mentioned cases are implementable using the efficient neirest neighbour search~\citep{dhillon2011nearest} with sublinear complexity in terms of $d$.
\end{itemize}

\section*{Acknowledgements}

The work of the second and the fourth author was supported by ERC Advanced Grant 788368.

\newpage
\clearpage

\bibliography{literature}
\bibliographystyle{dinat}

\appendix
\clearpage 

\onecolumn 
\part*{Appendix}

\section{Table of Frequently Used Notation} \label{sec:notation_table}

\begin{table}[!h]
\caption{Summary of frequently used notation.}
\label{tbl:notation}
\begin{center}

\begin{tabular}{|c|l|c|}
\hline
\multicolumn{3}{|c|}{{\bf From main paper} }\\
\hline
 $F: \R^d \rightarrow \R$ & Objective function & \eqref{eq:problem}\\
  $f: \R^d \rightarrow \R$ & Smooth part of the objective & \eqref{eq:problem}\\
 $\psi: \R^d \rightarrow \R \cup \{ +\infty \}$ & Non-smooth part of the objective & \eqref{eq:problem}\\
 $x^*$ & Global optimum of \eqref{eq:problem} &\\
 $F^{*}$ & $ \eqdef F(x^{*})$, the optimum value of the objective &\\
  $\mS \in \R^{d, \tau(\mS)}$ & Random matrix sampled from distribution $\cD$& \eqref{eq:update_general}\\
    $S$ & Random  subset of $\{1,\dots ,d \}$& \eqref{eq:update_general}\\
$\mu$ & The constant of strong convexity& As. \ref{as:sc} \\
$M_{\mS}$ & Lipschitz constant of $\nabla^2 f(x)$ on the range of $\mS$ & \eqref{eq:MS_def} \\
$M$ & Lipschitz constant of $\nabla^2 f(x)$ on $\R^d$; $M = M_{ \mI^d}$ & \\
$L$  & Lipschitz constant of $\nabla f(x)$ on $\R^d$ & \\
$\mA_{\mS}$& $\eqdef \mS^\top \mA \mS \in \R^{\tau(\mS) \times \tau(\mS)}$, for 
a given matrix $\mA \in \R^{d\times d}$ &  \\
$\nabla_{\mS} f(x)$& $ \eqdef \mS^\top \nabla f(x)$ & \\
$\nabla^2_{\mS} f(x)$& $ \eqdef (\nabla^2 f(x))_{\mS} = \mS^\top \nabla^2 f(x) \mS$ & \\
$\mH_{\mS}(x)$&  $\eqdef  \nabla^2_{\mS} f(x) +\sqrt{ \frac{M_{\mS}}{2}} \| \nabla_{\mS} f(x)\|^{\frac12} \mI^{\tau(\mS)} $ & Lem. \ref{lem:decrease} \\
$\zeta$ & $   \eqdef \lambda_{\min} \left( \left(\nabla^2 f(x^*)\right)^{\frac12}  \E{\mS (\nabla^2_{\mS} f(x^*) )^{-1} \mS^\top}  \left(\nabla^2 f(x^*)\right)^{\frac12} \right)$ & \eqref{eq:sc_generalized}\\
$\mP^\mS$& $ \eqdef \mS \left(\mS^\top \mS\right)^{-1} \mS^\top$, the projection onto range of $\mS$ & Sec.~\ref{sec:setup} \\
$R$ & $\eqdef\sup\limits_{x \in \R^d} \Bigl\{  \|x - x^{*} \| \; : \; F(x) \leq F(x^{0})  \Bigr\}$  & \eqref{Rdef} \\ 
\hline
\hline
\multicolumn{3}{|c|}{{\bf Standard} }\\
\hline
$\E{\cdot}$ & Expectation & \\
$\Prob(\cdot)$ & Probability & \\
$\mI^q$ & Identity matrix in $\R^{q\times q}$& \\
$\lambda_{\max} (\cdot), \lambda_{\min}(\cdot)$ & Maximal eigenvalue, minimal eigenvalue  & \\
$\la \cdot, \cdot \ra$ & Scalar product of vectors: $\la x, y \ra \eqdef x^\top y$& \\
$\| \cdot \| $ & Standard Euclidean norm: $\|x\| \eqdef \sqrt{ \la x, x \ra}$ & \\
 $\| \cdot \|_\mB$ & Weighted Euclidean norm: $\| x \|_\mB \eqdef \sqrt{\langle \mB x ,x \rangle} $ &\\
$e_i$ & $i$-th vector from the standard basis in $\R^d$& \\
$e$ & Vector of ones in $\R^d$; i.e., $e \eqdef \sum_{i=1}^d e_i$  & \\
 \hline
\hline
\multicolumn{3}{|c|}{{\bf From Appendix} }\\
\hline
 $\lambda_f(x) $ & $ \eqdef \left(\nabla f(x)^\top \left(\nabla^2 f(x)\right)^{-1}\nabla f(x) \right)^\frac12$, Newton decrement & \eqref{eq:newton_decrement} \\
 $\level $& $ \eqdef \{x ; f(x)\leq f(x^0)\}$, sublevel set & \\
$ \Tr{\cdot}$ & Trace &Sec.~\ref{sec:exp_size_proof} \\
\hline
\end{tabular}

 \end{center}
\end{table}

\clearpage

\section{Missing Proofs and Lemmas From Section~\ref{sec:preliminaries}}

\subsection{Explicit update}
\begin{lemma}\label{lem:explicit_update}
Let $x^+ = \argmin_y \langle g', y-x \rangle + \frac{H'}{2} \|x-y\|^2+ \frac{M'}{6}\|x-y \|^3$, where $H',M'  >0$.
Then we have
\begin{equation}\label{eq:x_update_implicit}
    x^+= x- \frac{2g'}{ H' + \sqrt{{H'}^2 + 2M'\|g'\|}}
\end{equation}
\end{lemma}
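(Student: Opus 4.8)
The plan is to reduce the minimization of the cubically-regularized quadratic to a one-dimensional problem along the direction of $g'$, then solve that 1D problem explicitly. First I would observe that the objective
$\phi(y) \eqdef \langle g', y-x\rangle + \frac{H'}{2}\|x-y\|^2 + \frac{M'}{6}\|x-y\|^3$
is strictly convex in $y$ (since $H',M'>0$ and $\|\cdot\|^3$ is convex), so it has a unique minimizer $x^+$, characterized by the first-order optimality condition $\nabla\phi(x^+)=0$. Writing $r \eqdef x^+ - x$, this condition reads
$g' + H' r + \frac{M'}{2}\|r\| r = 0$,
i.e. $\bigl(H' + \frac{M'}{2}\|r\|\bigr) r = -g'$. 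Since the scalar factor $H' + \frac{M'}{2}\|r\|$ is strictly positive, $r$ must be a negative multiple of $g'$, say $r = -t g'$ with $t \ge 0$; consequently $\|r\| = t\|g'\|$.

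Next I would substitute $r=-tg'$ into the optimality equation to get a scalar equation for $t$: $\bigl(H' + \frac{M'}{2} t\|g'\|\bigr)\, t g' = g'$, hence (for $g'\neq 0$)
$H' t + \frac{M'}{2} t^2 \|g'\| = 1$.
This is a quadratic in $t$ with positive leading coefficient and negative constant term, so it has exactly one positive root:
$t = \dfrac{-H' + \sqrt{{H'}^2 + 2M'\|g'\|}}{M'\|g'\|}$.
To match the stated closed form, I would rationalize: multiply numerator and denominator by $H' + \sqrt{{H'}^2 + 2M'\|g'\|}$, so the numerator becomes $\bigl(\sqrt{{H'}^2+2M'\|g'\|}\bigr)^2 - {H'}^2 = 2M'\|g'\|$, giving $t = \dfrac{2}{H' + \sqrt{{H'}^2 + 2M'\|g'\|}}$. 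Therefore $x^+ = x + r = x - t g' = x - \dfrac{2g'}{H' + \sqrt{{H'}^2 + 2M'\|g'\|}}$, which is exactly \eqref{eq:x_update_implicit}. The degenerate case $g'=0$ is handled separately: then $r=0$ solves the optimality condition and the formula also returns $x^+=x$, so it holds in general.

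The only mildly delicate point — the "main obstacle," though it is minor here — is justifying that the minimizer's displacement $r$ is genuinely collinear with $g'$ and points in the $-g'$ direction; this follows cleanly from the optimality equation because the coefficient $H' + \frac{M'}{2}\|r\|$ is strictly positive, but one should note it explicitly rather than assume it. Everything else is routine: existence/uniqueness from strict convexity, selecting the nonnegative root of the quadratic, and the rationalization step to bring the answer into the displayed form.
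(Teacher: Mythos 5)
Your proof is correct and follows essentially the same route as the paper: write the first-order optimality condition $g' + H'r + \frac{M'}{2}\|r\|r = 0$, reduce it to a scalar quadratic (the paper takes norms to get a quadratic in $\|r\|$, you parametrize $r=-tg'$ to get an equivalent quadratic in $t$), select the positive root, and rationalize. Your version is, if anything, slightly more careful — you explicitly justify collinearity of $r$ with $-g'$ and handle $g'=0$, and you avoid the sign typo in the paper's displayed quadratic $\frac{M'}{2}\|x^+-x\|^2 + H'\|x^+-x\| + \|g'\| = 0$, whose last term should read $-\|g'\|$.
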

\begin{proof}
By first-order optimality conditions we have $g'+ H'(x^+-x) + \frac{M'}{2}\|x^+-x\|(x^+-x) = 0$ which immediately yields
\begin{equation} \label{eq:lpdasadissio}
 x^+ = x - \frac{g'}{H' + \frac{M'}{2}\|x^+-x\|}. 
\end{equation}
Rearranging the terms and taking the norm we have $\frac{M'}{2}\|x^+-x \|^2 + H'\|x^+-x \| + \|g'\| = 0$. Solving the quadratic equation we arrive at
\[
\|x^+-x \|  = \frac{\sqrt{{H'}^2 + 2M'\|g'\|}-H'}{M'}.
\]
Plugging it back to~\eqref{eq:lpdasadissio}, we get~\eqref{eq:x_update_implicit}.
\end{proof}

\subsection{Proof of Lemma~\ref{lem:ub}}

\begin{eqnarray*}
&& D_f(x^+,x) - \frac{1}{2} (x^+-x)^\top \nabla^2 f(x)(x^+-x) \\
&& \qquad \qquad = 
\int_{0}^{1} \langle \nabla f(x+t(x^+-x)) - f(x), x^+-x \rangle \,dt - \frac{1}{2} (x^+-x)^\top \nabla^2 f(x)(x^+-x)
\\
&& \qquad \qquad = 
\int_{0}^{1} \int_{0}^{1} \langle t \nabla^2 f(x+st(x^+-x)),x^+-x, x^+-x \rangle \, ds\,dt- \frac{1}{2} (x^+-x)^\top \nabla^2 f(x)(x^+-x)
\\
&& \qquad \qquad = 
\int_{0}^{1} \int_{0}^{1} \langle t \nabla^2 f (x+st(x^+-x)) - \nabla^2 f(x),x^+-x,x^+-x \rangle \, ds\,dt 
\\ 
&& \qquad \qquad = 
\int_{0}^{1} \int_{0}^{1}  \int_{0}^{1}  \langle t^2 s \nabla^3 f (x+rst(x^+-x)),x^+-x,x^+-x,x^+-x \rangle \, dr\, ds\,dt.
\end{eqnarray*}
Using~\eqref{eq:update_general} we get
\begin{eqnarray*}
|f(x^+) - f(x) + \langle \nabla f(x),\mS h \rangle + \frac12 h^\top   \nabla^2_{\mS}  f(x) h |
& \stackrel{\eqref{eq:update_general} }{=}&   \left| \int_{0}^{1} \int_{0}^{1}  \int_{0}^{1}  \langle t^2 s \nabla^3 f (x+rst\mS h),\mS h,\mS h, \mS h \rangle \, dr\, ds\,dt \right| \\
& \refLE{eq:MS_def} &  
\int_{0}^{1} \int_{0}^{1}  \int_{0}^{1}   t^2 s M_{\mS} \|h_{\mS}\|^3 \, dr\, ds\,dt \\
&= &
 \frac{M_{\mS}}{6} \| h_{\mS}\|^3.
\end{eqnarray*}

\subsection{Proof of Lemma~\ref{lem:sharpness}}

First, $M \geq M_{\mS} $ is trivial. At the same time $M=M_{\mS}$ if $\nabla^3 f(x)$ is identity tensor always, which corresponds to $f(x) = \frac16\sum_{i=1}^d  x_i^3$. Therefore, the inequality is tight. 

To show sharpness of $M_{\mS} \geq \left(\frac{\tau}{d}\right)^{\frac32} M$, consider $f(x) = \frac16 (x^\top e)^3$. In this case, we have\footnote{By $[e] \in \R^{d\times d\times d}$ we mean third order outer product of vector $e$.} $\nabla^3 f(x) = [e]^3$  and $\mS=e_i$. In such case, $M = d^\frac32$ and $M_{\mS} = \tau^\frac32$. 

Note that $f$ is non-convex in both examples. However, it is is convex on a set where $x_i \geq 0$ for all $i$.

\section{Proofs for Section~\ref{sec:global}}

\subsection{Proof of Lemma~\ref{lem:exp_size}\label{sec:exp_size_proof}}
Let $\Tr{\mA}$ be a trace of square matrix $\mA$. We have
\begin{eqnarray*}
\E{\tau(\mS)} &=& \E{ \Tr{\mI^{\tau(\mS)}}} = \E{\Tr{ \mS^\top \mS \left(\mS^\top \mS\right)^{-1} } } = \E{\Tr{ \mS \left(\mS^\top \mS\right)^{-1}  \mS^\top }}\\
&=&
 \Tr{\E{ \mS \left(\mS^\top \mS\right)^{-1}  \mS^\top }} \stackrel{\eqref{eq:uniform_sampling}}{=} \Tr{\frac{\tau}{d} \mI^d} \\
 &=&\tau.
\end{eqnarray*}

\subsection{Proof of Lemma~\ref{lem:keylemma}}

For any $h' \in \R^d$ denote 
$$
\ba{rcl}
\Omega_{\mS}(x, h') & \Def &\displaystyle  \la \nabla f(x), \mP^\mS h'\ra
+ \frac{1}{2}\la \nabla^2 f(x)\mP^\mS h', \mP^\mS h' \ra
+ \frac{H}{6}\|\mP^\mS h' \|^3 + \psi(x +\mP^\mS h').
\ea
$$
Clearly, it holds
$$
\ba{rcl}
\min\limits_{h' \in \R^d} \Omega_{\mS}(x, h') 
& = & 
\min\limits_{h \in \R^{\tau(S)}} T_{\mS}(x, h).
\ea
$$
Therefore, for any fixed $y \in \R^d$ we have
$$
\ba{rcl}
F(x^{k + 1}) & \refLE{eq:coordinate_ub_full} &
f(x^k) +  \min\limits_{h' \in \R^d} \Omega_{\mS}(x^k, h')  
\;\; \leq \;\; 
f(x^k) +  \Omega_{\mS}(x^k; y - x^k).
\ea
$$
Therefore,
$$
\ba{rcl}
\E{ F(x^{k + 1}) \, | \, x^k}& \leq & \displaystyle f(x^k) + \E{ \Omega_{\mS}(x^k; y - x^k) } \\
\\
& = & \displaystyle f(x^k) + \frac{\tau}{d} \la \nabla f(x^k), y - x^k \ra
+ \E{  \frac{1}{2}\la \mP^\mS \nabla^2 f(x^k)\mP^\mS (y - x^k), y - x^k \ra }\\
\\
& \; &\displaystyle  \quad + \quad
\frac{M}{6}\E{ \| \mP^\mS (y - x^k) \|^3 }
+ \E{\psi(x^k +\mP^\mS (y-x^k))} .
\ea.
$$

Let us get rid of the expectations above. Firstly, we have 
\begin{eqnarray*}
\E{\psi(x +\mP^\mS (y-x^k))} 
&=&
 \E{\left \langle \psi'\left( \left( \mI^d - \mP^\mS \right)x^k +\mP^\mS y\right) ,e  \right \rangle }  \\
  & = &
  \E{\left \langle \left( \mI^d - \mP^\mS \right) \psi'\left( x^k\right) ,e  \right \rangle } + \E{\left \langle \mP^\mS  \psi'\left( y\right) ,e  \right \rangle } 
  \\
  &=&
 \left(1 - \frac{\tau}{d}\right)\psi(x^k) + \frac{\tau}{d} \psi(y) .
\end{eqnarray*}

For the cubed norm it can be estimated as follows
$$
\ba{rcl}
\E{ \| \mP^\mS h' \|^3 }& \leq & \displaystyle \|h'\| \cdot \E{\| \mP^\mS h'\|^2 }
\; = \; \frac{\tau}{d}\|h'\|^3, \qquad \forall h' \in \R^d.
\ea
$$
Lastly, note that 
\begin{eqnarray*}
\E{\mP^\mS \nabla^2 f(x^k) \mP^\mS}
& = & \E{\mP^\mS \left(\nabla^2 f(x^k)\right)^{\frac12}} \E{ \left(\nabla^2 f(x^k)\right)^{\frac12}\mP^\mS} 
\\
&& \qquad + \E{ \left(  \mP^\mS \left(\nabla^2 f(x^k)\right)^{\frac12} -   \E{\mP^\mS \left(\nabla^2 f(x^k)\right)^{\frac12}} \right)\left(  \mP^\mS \left(\nabla^2 f(x^k)\right)^{\frac12} -   \E{\mP^\mS \left(\nabla^2 f(x^k)\right)^{\frac12}} \right)^\top  } 
\\
&=& \frac{\tau^2}{d^2} \nabla^2 f(x^k) +\E{  \left( \mP^\mS - \frac{\tau}{d} \mI^d \right)\nabla^2 f(x^k) \left( \mP^\mS - \frac{\tau}{d} \mI^d \right)  } 
\\
&\preceq & \frac{\tau^2}{d^2} \nabla^2 f(x^k) + L \E{  \left( \mP^\mS - \frac{\tau}{d} \mI^d \right)^2  }
 \\
&= & \frac{\tau^2}{d^2} \nabla^2 f(x^k) + \frac{\tau(d-\tau)}{d^2}L\mI^d.
\end{eqnarray*}

Therefore, we conclude
$$
\ba{rcl}
\E{  F(x^{k + 1}) \, | \, x^k }& \leq &\displaystyle 
f(x^k) + \frac{\tau}{d} \la \nabla f(x^k), y - x^k \ra 
+ \frac{\tau(d-\tau)}{d^2}\cdot \frac{L}{2} \| y - x^k \|^2
\\
\\
& \; & \displaystyle \quad + \quad \frac{\tau^2}{d^2}  \cdot \frac{1}{2} \la \nabla^2 f(x^k)(y - x^k), y - x^k \ra
+ \frac{\tau}{d} \cdot \frac{M}{6}\|y - x^k\|^3 \\
\\
& \; & \displaystyle \quad + \quad \frac{\tau}{d} \psi(y) + \left(1 - \frac{\tau}{d}\right) \psi(x^k). \\
\ea
$$

Finally, by convexity and from Lipschitz continuity of the Hessian~\eqref{eq:coordinate_ub},
we have the following upper estimate:
$$
\ba{cl}
& \displaystyle  \la  \nabla f(x^k), y - x^k \ra + \frac{\tau }{d} \cdot \frac{1}{2} \la \nabla^2 f(x^k)(y - x^k), y - x^k \ra \\
\\
&\displaystyle   \qquad \qquad \; = \;
\frac{d - \tau}{d } \la \nabla f(x^k), y - x^k \ra
+ \frac{\tau }{d } \Bigl( 
\la \nabla f(x^k),y - x^k \ra + \frac{1}{2} \la \nabla^2 f(x^k)(y - x^k), y - x^k \ra
\Bigr) \\
\\
&\displaystyle   \qquad \qquad \; \leq \; 
\frac{d - \tau}{d } \Bigl( f(y) - f(x^k) \Bigr)
+ \frac{\tau }{d } \Bigl( 
f(y) - f(x^k) + \frac{M}{6}\|y - x^k\|^3
\Bigr) \\
\\
& \displaystyle \qquad \qquad  \; \leq \; f(y) - f(x^k) + \frac{M}{6}\|y - x^k\|^3.
\ea
$$
which completes the proof. \qed

\subsection{Proof of Theorem~\ref{thm:global_weakly}}

Let us denote the following auxiliary sequences:
$$
\ba{rcl}
a_{k} & \Def & k^2, \qquad A_{k} \; \Def \displaystyle  \; A_{0} + \sum\limits_{i = 1}^k a_i, \qquad k \geq 1,
\ea
$$
and
$$
\ba{rcl}
A_0 & \Def &\displaystyle   \frac{4}{3}\left( \frac{d}{\tau} \right)^3.
\ea
$$
Then, we have an estimate
\beq \label{A_k_grows}
\ba{rcl}
A_{k} \; = \; \displaystyle A_0 + \sum\limits_{i = 1}^k i^2 & \geq &\displaystyle 
A_0 + \int\limits_{0}^k x^2 dx \; = \; A_0 + \frac{k^3}{3}.
\ea
\eeq
Now, let us fix iteration counter $k \geq 0$ and set
$$
\ba{rcl}
\alpha_k & \Def &\displaystyle   \frac{d}{\tau} \frac{a_{k + 1}}{A_{k + 1}}
\quad \Leftrightarrow \quad 1 - \frac{\tau}{d} \alpha_k \; = \;  \frac{A_k}{A_{k + 1}}.
\ea
$$
Note that we have $\alpha_k \leq 1$ by the choice of $A_0$, since it holds
$$
\ba{rcl}
\displaystyle  \max\limits_{\xi \geq 0} \frac{\xi^2}{A_0 + \frac{\xi^3}{3}} & = & \displaystyle \frac{\tau}{d}.
\ea
$$ 
Let us plug $y \equiv \alpha_k x^{*} + (1 - \alpha_k) x^k$ into~\eqref{GlobalUpper}.
By convexity we obtain
$$
\ba{rcl}
\E{ F(x^{k + 1}) \, | \, x^k } & \leq & \displaystyle 
\Bigl(1 - \frac{\tau}{d} \Bigr) F(x^k) + \frac{\tau}{d}\alpha_k F^{*} + \frac{\tau}{d}(1 - \alpha_k) F(x^k) \\
\\
& \; & \displaystyle \quad + \quad 
\frac{\tau}{d}\biggl( \frac{d - \tau}{d } \frac{L \|x^k - x^{*}\|^2}{2}  \alpha_k^2
+ \frac{M \|x^k - x^{*}\|^3}{3} \alpha_k^3 \biggr) \\
\\
& = &\displaystyle 
\frac{A_k}{A_{k + 1}} F(x^k) + \frac{a_{k + 1}}{A_{k + 1}} F^{*}
+ \frac{d}{\tau} \frac{d - \tau}{d }  \frac{L \|x^k - x^{*}\|^2}{2} \left( \frac{a_{k + 1}}{A_{k + 1}} \right)^2
+ \left(\frac{d}{\tau}\right)^2 \frac{M \|x^k - x^{*}\|^3}{3} \left( \frac{a_{k + 1}}{A_{k + 1}}  \right)^3 \\
\\
& \leq & \displaystyle 
\frac{A_k}{A_{k + 1}} F(x^k) + \frac{a_{k + 1}}{A_{k + 1}} F^{*}
+ \frac{d - \tau}{\tau} \frac{LR^2}{2} \left( \frac{a_{k + 1}}{A_{k + 1}} \right)^2
+ \left( \frac{d}{\tau} \right)^2 \frac{M R^3}{3} \left( \frac{a_{k + 1}}{A_{k + 1}}  \right)^3.
\ea
$$

Therefore, for the residual $\delta_k \Def \E{ F(x^k) }- F^{*}$ we have the following bound
$$
\ba{rcl}
A_{k + 1} \delta_{k + 1} & \leq & \displaystyle A_k \delta_k 
+ \frac{d - \tau}{\tau}\frac{LR^2}{2} \frac{a_{k + 1}^2}{A_{k + 1}}
+ \left( \frac{d}{\tau} \right)^2 \frac{M R^3}{3} \frac{a_{k + 1}^3}{A_{k + 1}^2}, \quad k \geq 0.
\ea
$$
Summing up these inequalities for different $k$, we obtain
$$
\ba{rcl}
A_k \delta_k & \leq & \displaystyle A_0 \delta_0 
+ \frac{d - \tau}{\tau} \frac{L R^2}{2} \sum\limits_{i = 1}^k \frac{a_i^2}{A_i}
+ \left( \frac{d}{\tau} \right)^2 \frac{M R^3}{3} \sum\limits_{i = 1}^k \frac{a_i^3}{A_i^2}, \qquad k \geq 1.
\ea
$$
To finish the proof it remains to notice that
$$
\ba{rcl}
\displaystyle \sum\limits_{i = 1}^k \frac{a_i^2}{A_i} & \refLE{A_k_grows} & \displaystyle 
\sum\limits_{i = 1}^k \frac{i^4}{A_0 + \frac{1}{3} i^3 }
\; \leq \;
3 \sum\limits_{i = 1}^k i \; \leq \; 3 k^2,
\ea
$$
and
$$
\ba{rcl}
\displaystyle \sum\limits_{i = 1}^k \frac{a_i^3}{A_i^2} & \displaystyle \refLE{A_k_grows} & \displaystyle
\sum\limits_{i = 1}^k \frac{i^6}{(A_0 + \frac{1}{3}i^3 )^2} 
\; \leq \;
9 k.
\ea
$$

\qed

\subsection{Proof of Theorem~\ref{thm:global_strongly}}

Given that Assumption~\ref{as:sc} (strong convexity) is satisfied, the following inequality holds
\[
\ba{rcl}
\displaystyle \frac{\mu}{2}\|x - x^{*}\|^2 & \leq & \displaystyle F(x) - F^{*}, \qquad  \forall x \in \R^d,
\ea
\]
and thus we have a bound for the radius of level sets~\eqref{Rdef}:
$$
\ba{rcl}
R^2 & \leq & \displaystyle \frac{2}{\mu}(F(x^0) - F^{*}).
\ea
$$
Combining the above with~\eqref{GlobalConv} we obtain the following convergence estimate:
$$
\ba{rcl}
\displaystyle \E{ F(x^k) - F^{*} } & \leq & \displaystyle 
\left(
\frac{d - \tau}{\tau} \cdot \frac{18 L}{\mu k}
+ \left( \frac{d}{\tau} \right)^2 \cdot
\frac{18 M R }{\mu k^2}
+ \frac{1}{1 + \frac{1}{4}\bigl( \frac{\tau}{d} k \bigr)^3}
\right) \cdot 
\bigl( F(x^0) - F^{*} \bigr), \quad k \geq 1.
\ea
$$
Therefore, we get the linear decrease of the expected residual
$$
\ba{rcl}
\displaystyle \E{ F(x^k) - F^{*}  }
& \leq & \displaystyle \frac{1}{2}
\bigl( F(x^0) - F^{*} \bigr),
\ea
$$
as soon as the following three bounds for $k$ are all reached:
\begin{enumerate}
	\item $\frac{d - \tau}{\tau} \cdot \frac{18L}{\mu k} \leq \frac{1}{6}
	\quad \Leftrightarrow \quad
	k \geq 108 \frac{d - \tau}{\tau} \cdot \frac{L}{\mu}$.
	
	\item $\bigl( \frac{d}{\tau} \bigr)^2 \cdot \frac{18 M R}{\mu k^2} \leq \frac{1}{6}
	\quad \Leftrightarrow \quad
	k \geq \frac{d}{\tau} \sqrt{ 108  \frac{M R}{\mu}   }.
	$
	
	\item $\frac{1}{1 + \frac{1}{4}\bigl( \frac{\tau}{d} k^3 \bigr)^3} \leq \frac{1}{6}
	\quad \Leftrightarrow \quad
	k \geq \frac{d}{\tau} 20^{1/3}$.
\end{enumerate}
\qed

\section{Proofs for Section~\ref{sec:local} \label{sec:local_proofs}}
\subsection{Several technical Lemmas}
 It will be convenient to denote the Newton decrement as follows:
 
 \begin{equation} 
 \label{eq:newton_decrement}
 \lambda_f(x) \eqdef \left(\nabla f(x)^\top \left(\nabla^2 f(x)\right)^{-1}\nabla f(x) \right)^\frac12
 \end{equation}
 and a sublevel set of $x^0$ as $\level$; i.e., $\level \eqdef \{x ; f(x)\leq f(x^0)\} $.

\begin{lemma} (Local bounds)
Suppose that $x^0$ is such that $f(x^0)-f(x^*) \leq \varrho^4 \frac{2\left(\min_{x\in \level}\lambda_{\min} \nabla^2_{\mS} f(x)\right)^4}{LM_{\mS}^2 \|S \|^2}$ for some $\varrho>0$. Then, we have 
\begin{equation}\label{eq:ndajbdhahbj}
\sqrt{ \frac{M_{\mS}}{2}} \| \mS^\top \nabla f(x^k)\|^{\frac12} \mI^{\tau(\mS)} \preceq \varrho \nabla^2_{\mS}f(x^k).
\end{equation} Suppose further that $f(x^0) - f(x^*)\leq \varphi^2 \frac{ \mu\left(\lambda_{\min} \nabla^2_{\mS} f(x^*)\right)^2 }{2 M_{\mS}^2}$ for some $\varphi>0$. Then we have 
\begin{equation}\label{eq:dasjnlsdhjkd}
(1+\varphi)^{-1} \nabla^2_{\mS}f(x^*) \preceq \nabla^2_{\mS}f(x^k) \preceq (1+\varphi) \nabla^2_{\mS}f(x^*).
\end{equation}
Lastly, if $f(x^0)-f(x^*)\leq \omega^{-1}\left( \frac{2\mu^{\frac32}}{(1+\gamma^{-1})M} \right)$ where $\omega(y)\eqdef y - \log(1+y)$ and $\gamma>0$, we have
\begin{equation} \label{eq:dojasjodasj}
f(x^k) -f(x^*)\leq  \frac12 (1+\gamma) \lambda_f(x^k)^2.
\end{equation}
\end{lemma}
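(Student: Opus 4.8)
\emph{Proof plan.}
The three bounds share a common engine, which I would set up once. Since $\psi=0$ throughout this section we have $F=f$ and $F^*=f(x^*)$, so by Remark~\ref{rem:monotonic} the sequence $\{f(x^k)\}_{k\ge 0}$ is non-increasing; hence $x^k\in\level$ for every $k$, and in particular $f(x^k)-f(x^*)\le f(x^0)-f(x^*)$. On top of this I would use three routine consequences of the standing assumptions: (a) $\tfrac{\mu}{2}\|x^k-x^*\|^2\le f(x^k)-f(x^*)$ from $\mu$-strong convexity (Assumption~\ref{as:sc}); (b) $\|\nabla f(x^k)\|^2\le 2L\,(f(x^k)-f(x^*))$ from convexity and $L$-smoothness; and (c) $\|\nabla^2_\mS f(x^k)-\nabla^2_\mS f(x^*)\|\le M_\mS\|x^k-x^*\|$, the Lipschitzness of the subspace Hessian, which is a short consequence of the definition \eqref{eq:MS_def} (equivalently of Lemma~\ref{lem:ub} with a vanishing first-order term). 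I would then prove the three displays in the order stated.

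For \eqref{eq:ndajbdhahbj}: since $x^k\in\level$ we have $\nabla^2_\mS f(x^k)\succeq\nu\,\mI^{\tau(\mS)}$ with $\nu\eqdef\min_{x\in\level}\lambda_{\min}\nabla^2_\mS f(x)$, so it suffices to prove the scalar inequality $\sqrt{M_\mS/2}\,\|\nabla_\mS f(x^k)\|^{1/2}\le\varrho\,\nu$, i.e.\ after squaring $\tfrac{M_\mS}{2}\|\nabla_\mS f(x^k)\|\le\varrho^2\nu^2$. Bounding $\|\nabla_\mS f(x^k)\|=\|\mS^\top\nabla f(x^k)\|\le\|\mS\|\,\|\nabla f(x^k)\|\le\|\mS\|\sqrt{2L\,(f(x^0)-f(x^*))}$ using (b), and substituting the hypothesis $f(x^0)-f(x^*)\le 2\varrho^4\nu^4/(L\,M_\mS^2\|\mS\|^2)$, gives exactly the required estimate. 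This is the easy case.

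For \eqref{eq:dasjnlsdhjkd}: combining (a) with monotonicity and the hypothesis yields $\|x^k-x^*\|^2\le\tfrac{2}{\mu}(f(x^0)-f(x^*))\le\varphi^2\,\lambda_{\min}(\nabla^2_\mS f(x^*))^2/M_\mS^2$, hence $\|x^k-x^*\|\le\varphi\,\lambda_{\min}(\nabla^2_\mS f(x^*))/M_\mS$; plugging this into (c) gives $\|\nabla^2_\mS f(x^k)-\nabla^2_\mS f(x^*)\|\le\varphi\,\lambda_{\min}(\nabla^2_\mS f(x^*))$. I would then turn this additive perturbation bound into the claimed relative sandwich by conjugation: writing $\nabla^2_\mS f(x^k)=\nabla^2_\mS f(x^*)^{1/2}\bigl(\mI^{\tau(\mS)}+\Delta\bigr)\nabla^2_\mS f(x^*)^{1/2}$ with $\|\Delta\|\le\varphi$, the spectral inequalities $(1+\varphi)^{-1}\mI^{\tau(\mS)}\preceq\mI^{\tau(\mS)}+\Delta\preceq(1+\varphi)\mI^{\tau(\mS)}$ (the lower one after a harmless tightening of the tolerance, since $1-\varphi$ and $(1+\varphi)^{-1}$ agree to first order) yield \eqref{eq:dasjnlsdhjkd}.

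For \eqref{eq:dojasjodasj}: this is the substantive case, and I expect it to be the main obstacle. I would start from the $M$-Lipschitz-Hessian Taylor inequality at $x^k$ evaluated at $x^*$ (the $\mS=\mI^d$ instance of \eqref{eq:coordinate_ub} with $h$ chosen so that $x^+=x^*$):
\[
f(x^k)-f(x^*)\ \le\ \langle\nabla f(x^k),\,x^k-x^*\rangle-\tfrac12\|x^k-x^*\|^2_{\nabla^2 f(x^k)}+\tfrac{M}{6}\|x^k-x^*\|^3,
\]
then bound $\langle\nabla f(x^k),x^k-x^*\rangle\le\lambda_f(x^k)\,\|x^k-x^*\|_{\nabla^2 f(x^k)}$ by Cauchy--Schwarz in the $\nabla^2 f(x^k)$-norm. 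Setting $a\eqdef\lambda_f(x^k)$ and $b\eqdef\|x^k-x^*\|_{\nabla^2 f(x^k)}$, using $\|x^k-x^*\|\le b/\sqrt\mu$ and the relation $b\le\bigl(1-\tfrac{M\|x^k-x^*\|}{2\mu}\bigr)^{-1}a$ (which comes from $\nabla f(x^k)=\bigl(\int_0^1\nabla^2 f(x^*+t(x^k-x^*))\,dt\bigr)(x^k-x^*)$ together with Lipschitzness of the Hessian), the right-hand side reduces to a one-variable expression in $b$; maximizing it produces $f(x^k)-f(x^*)\le\tfrac12 a^2/\bigl(1-cM\mu^{-3/2}\|x^k-x^*\|\bigr)$ for an explicit constant $c>0$. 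It then remains to push the ``effective Newton decrement'' $M\mu^{-3/2}\lambda_f(x^k)$ below the threshold $\tfrac{\gamma}{1+\gamma}$ that makes the prefactor $\le 1+\gamma$; this is where the function $\omega(y)=y-\log(1+y)$ enters, via a self-concordant-type lower bound for $f(x^k)-f(x^*)$ in terms of $\omega\bigl(M\mu^{-3/2}\lambda_f(x^k)\bigr)$, whose inversion together with $f(x^k)-f(x^*)\le f(x^0)-f(x^*)$ and the hypothesis $f(x^0)-f(x^*)\le\omega^{-1}\!\bigl(2\mu^{3/2}/((1+\gamma^{-1})M)\bigr)$ delivers \eqref{eq:dojasjodasj}. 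The delicate point — and the part I would expect to consume most of the work — is pinning down the exact form of that self-concordant-type inequality and checking that all the constants line up with the specific $\omega^{-1}$-threshold in the hypothesis; the first two bounds, by contrast, are essentially immediate once monotonicity confines $x^k$ to $\level$.
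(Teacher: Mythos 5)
Your treatment of the first two bounds is essentially the paper's own: the same chain $\|\nabla_\mS f(x^k)\|\le\|\mS\|\,\|\nabla f(x^k)\|\le\|\mS\|\sqrt{2L(f(x^0)-f^*)}$ against $\nu\eqdef\min_{x\in\level}\lambda_{\min}\nabla^2_\mS f(x)$ for \eqref{eq:ndajbdhahbj}, and the same $M_\mS\|x^k-x^*\|\le\varphi\,\lambda_{\min}(\nabla^2_\mS f(x^*))$ perturbation bound for \eqref{eq:dasjnlsdhjkd} (the paper converts the additive bound to the two-sided sandwich by running the argument symmetrically in $x^k$ and $x^*$ rather than by your conjugation trick; both routes have the same small $(1-\varphi)$ versus $(1+\varphi)^{-1}$ wrinkle on the lower side that you flag). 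Where you genuinely diverge is \eqref{eq:dojasjodasj}. The paper's argument is entirely self-concordance-theoretic: it first observes (Lemma~\ref{lem:selfconc}) that $\mu$-strong convexity plus $M$-Lipschitz Hessian make $f$ a $\varsigma$-self-concordant function with $\varsigma=M/\mu^{3/2}$, and then imports \emph{both} halves of the argument from that framework — the lower bound $\omega(\lambda)\le f-f^*$ to localize $\lambda_f(x^k)$ below $\tfrac{2}{(1+\gamma^{-1})\varsigma}$, and the upper bound $f-f^*\le\omega_*(\lambda)$ (for the rescaled standard self-concordant function $h=\tfrac{\varsigma^2}{4}f$) combined with the elementary estimate $\omega_*(y)\le(1+\gamma)\tfrac{y^2}{2}$ for $y\le\tfrac{1}{1+\gamma^{-1}}$ (Lemmas~\ref{lem:sc2} and~\ref{lem:snjodanj}) to produce the $\tfrac12(1+\gamma)\lambda_f^2$ bound. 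You replace the $\omega_*$ half with a direct cubic Taylor model plus Cauchy--Schwarz in the $\nabla^2 f(x^k)$-norm, which is workable and arguably more elementary, but you still invoke the $\omega$-lower-bound for the localization — i.e., you implicitly need exactly the paper's self-concordance observation. The advantage of the paper's all-self-concordance route is that both halves live in the same normalization, so the threshold $\omega^{-1}\bigl(2\mu^{3/2}/((1+\gamma^{-1})M)\bigr)$ in the hypothesis and the prefactor $1+\gamma$ in the conclusion line up automatically; in your hybrid the constant-matching you flag as ``the delicate point'' is real residual work, though harmless for the eventual $\varepsilon$--$\delta$ use in Theorem~\ref{thm:local}.
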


\begin{proof}
For the sake of simplicity, let $x = x^k$ and $\mS = \mS^k$ throughout this proof.
For the first part, we have
\begin{eqnarray*}
\sqrt{ \frac{M_{\mS}}{2}} \| \mS^\top \nabla f(x)\|^{\frac12} \mI^{\tau(\mS)}&\preceq& 
\sqrt{ \frac{M_{\mS}}{2}} \| \mS\|^\frac12 \| \nabla f(x)\|^{\frac12}  \mI^{\tau(\mS)} \\
& \preceq &
\sqrt{ \frac{M_{\mS}}{2}}  \| \mS\|^\frac12  2^{\frac14}L^{\frac14}\left(f(x^0)-f(x^*) \right)^\frac14  \mI^{\tau(\mS)}
\\
&\preceq&
\varrho\min_{x\in \level}\lambda_{\min} \nabla^2_{\mS} f(x)   \mI^{\tau(\mS)}
 \; \preceq \;
 \varrho \nabla^2_{\mS}f(x).
\end{eqnarray*}

For the second part, we have
\begin{eqnarray*}
\nabla^2_{\mS} f(x^k)-\nabla^2_{\mS}  f(x^*) &\preceq & 
M_{\mS} \|x^k-x^* \| \mI^{\tau(\mS)} \\
& \preceq &  M_{\mS}\sqrt{\frac{2 (f(x^k) - f(x^*))}{\mu}}  \mI^{\tau(\mS)} \\
& \preceq &   M_{\mS}\sqrt{\frac{2 (f(x^0) - f(x^*))}{\mu}}  \mI^{\tau(\mS)}
\\
&\preceq & 
\varphi  \nabla^2_{\mS} f(x^*).
\end{eqnarray*}
Therefore, we can conclude that $\nabla^2_{\mS} f(x) \preceq (1+\varphi) \nabla^2_{\mS} f(x^*)$. Analogously we can show $\nabla^2_{\mS} f(x^*) \preceq (1+\varphi) \nabla^2_{\mS} f(x)$ and thus~\eqref{eq:dasjnlsdhjkd} follows.

Lastly, if $f(x^0)-f(x^*)\leq \omega\left( \frac{2\mu^{\frac32}}{(1+\gamma^{-1})M} \right)$, then due to~\citep{nesterov2018lectures} we have
\[
\omega\left(\lambda_f(x^k)\right)\leq  f(x^k)-  f(x^*) \leq f(x^0)-  f(x^*) \leq  \omega\left( \frac{2\mu^{\frac32}}{(1+\gamma^{-1})M} \right)
\]
and thus $\lambda_f(x^k) \leq \frac{2\mu^{\frac32}}{(1+\gamma^{-1})M}$. Now~\eqref{eq:dojasjodasj} follows from Lemma~\ref{lem:selfconc} and Lemma~\ref{lem:sc2}.
\end{proof}

 \begin{lemma}\label{lem:selfconc}
 Function $f$ is $\frac{M}{\mu^{\frac32}}$ self-concordant.
 \end{lemma}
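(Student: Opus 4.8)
The plan is to verify the definition of self-concordance directly by combining the two hypotheses at hand: the $M$-Lipschitzness of $\nabla^2 f$ and the $\mu$-strong convexity of $f$. Recall that a thrice-differentiable convex function $g$ is $\kappa$-self-concordant if $|\nabla^3 g(x)[h]^3| \le \kappa\,\langle \nabla^2 g(x)h,h\rangle^{3/2}$ for all $x,h$ (in the normalization used in~\citep{nesterov2018lectures}, up to the usual factor-of-two convention). So it suffices to produce such a bound with $\kappa = M/\mu^{3/2}$.

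First I would record the pointwise third-derivative bound implied by the Lipschitz Hessian. Since $M = M_{\mI^d}$, definition~\eqref{eq:MS_def} specialized to $\mS=\mI^d$ gives immediately
\[
|\nabla^3 f(x)[h]^3| \le M\|h\|^3, \qquad \forall x\in\R^d,\ h\in\R^d,
\]
which is just the statement that an $M$-Lipschitz Hessian controls the third directional derivative (the same fact already underlies Lemma~\ref{lem:ub}).

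Second I would invoke Assumption~\ref{as:sc}: $\nabla^2 f(x)\succeq \mu\mI^d$ implies $\langle\nabla^2 f(x)h,h\rangle\ge \mu\|h\|^2$, hence $\|h\|\le \mu^{-1/2}\langle\nabla^2 f(x)h,h\rangle^{1/2}$ and therefore $\|h\|^3\le \mu^{-3/2}\langle\nabla^2 f(x)h,h\rangle^{3/2}$. Substituting this into the previous display yields
\[
|\nabla^3 f(x)[h]^3| \le \frac{M}{\mu^{3/2}}\,\langle\nabla^2 f(x)h,h\rangle^{3/2},
\]
which is exactly the claimed self-concordance with constant $M/\mu^{3/2}$.

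This is essentially a one-line estimate, so there is no genuine obstacle. The only point that deserves a little care is bookkeeping the self-concordance constant convention (whether the definition one uses carries a factor of $2$), so that the stated constant $M/\mu^{3/2}$ is consistent with the form in which Lemma~\ref{lem:selfconc} is later applied through the self-concordance machinery of~\citep{nesterov2018lectures}.
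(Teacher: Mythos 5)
Your proof is correct and is exactly the paper's argument: the one-line chain $\frac{M}{\mu^{3/2}}\|u\|^3_{\nabla^2 f(x)} \ge M\|u\|^3 \ge |\nabla^3 f(x)[u]^3|$ combines the Lipschitz-Hessian bound on the third derivative with the strong-convexity lower bound on $\|u\|_{\nabla^2 f(x)}$, just as you do. Your remark about tracking the normalization convention for the self-concordance constant is a reasonable point of care but does not change the argument.
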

 \begin{proof}
 \[
 \frac{M}{\mu^{\frac32}} \|u\|^3_{\nabla^2 f(x)}  \geq  M\|u\|^3 \geq  \nabla^3 f(x)[u,u,u]
 \]
 \end{proof}
 
\begin{lemma}\label{lem:sc2}
Consider any $\gamma\in \R^+$ and suppose that $f$ is $\varsigma$ self-concordant. Then if  $\lambda_f(x)  < \frac{2}{(1+\gamma^{-1})\varsigma}$ we have
\begin{equation}\label{eq:sc2}
f(x) - f(x^*)\leq \frac12 \left(1+\gamma\right)  \lambda_f(x)^2   
\end{equation}
\end{lemma}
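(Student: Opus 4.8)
The plan is to reduce to the classical theory of \emph{standard} self-concordant functions by a suitable rescaling, and then invoke the known upper bound on the optimality gap in terms of the Newton decrement.

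First I would pass from $f$ to $g \eqdef \frac{\varsigma^2}{4} f$. A direct computation using $\nabla^3 g = \frac{\varsigma^2}{4}\nabla^3 f$ and $\|u\|_{\nabla^2 g(x)}^3 = \bigl(\frac{\varsigma^2}{4}\bigr)^{3/2}\|u\|_{\nabla^2 f(x)}^3$ shows that the hypothesis $\nabla^3 f(x)[u,u,u]\leq \varsigma\|u\|_{\nabla^2 f(x)}^3$ translates into $\nabla^3 g(x)[u,u,u]\leq 2\|u\|_{\nabla^2 g(x)}^3$; that is, $g$ is standard self-concordant in the normalization of \citep{nesterov2018lectures}, and $\varsigma^2/4$ is precisely the scaling factor that turns the constant $\varsigma$ into $2$. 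Since $g$ is a positive multiple of $f$, its unique minimizer is again $x^*$, and the Newton decrement scales as $\lambda_g(x)^2 = \frac{\varsigma^2}{4}\lambda_f(x)^2$. Hence the assumption $\lambda_f(x) < \frac{2}{(1+\gamma^{-1})\varsigma}$ becomes $\lambda_g(x) < \frac{1}{1+\gamma^{-1}} = \frac{\gamma}{1+\gamma} < 1$.

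Next I would apply the standard local bound for self-concordant functions (see \citep{nesterov2018lectures}): whenever $\lambda_g(x) < 1$, the minimizer exists and $g(x) - g(x^*) \leq \omega_*(\lambda_g(x))$, where $\omega_*(t) \eqdef -t - \log(1-t)$. I would then estimate $\omega_*$ by its power series, $\omega_*(t) = \sum_{k\geq 2}\frac{t^k}{k} \leq \frac12\sum_{k\geq 2}t^k = \frac{t^2}{2(1-t)}$ for $t\in[0,1)$. Using $\lambda_g(x) < \frac{\gamma}{1+\gamma}$, hence $1-\lambda_g(x) > \frac{1}{1+\gamma}$, this gives $g(x)-g(x^*) \leq \frac{\lambda_g(x)^2}{2(1-\lambda_g(x))} \leq \frac{1+\gamma}{2}\lambda_g(x)^2$.

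Finally I would undo the rescaling: multiplying by $\frac{4}{\varsigma^2}$ and using $\lambda_g(x)^2 = \frac{\varsigma^2}{4}\lambda_f(x)^2$ yields $f(x) - f(x^*) = \frac{4}{\varsigma^2}\bigl(g(x) - g(x^*)\bigr) \leq \frac{1+\gamma}{2}\lambda_f(x)^2$, which is exactly~\eqref{eq:sc2}. The argument is essentially routine; the only point requiring care is the bookkeeping of normalization conventions—verifying that this paper's ``$\varsigma$-self-concordant'' corresponds, after scaling by $\varsigma^2/4$, to the constant-$2$ normalization of the cited reference, and correctly propagating that factor through both the Newton decrement and the function-value gap.
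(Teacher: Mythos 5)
Your proof is correct and follows essentially the same route as the paper's: rescale to $\frac{\varsigma^2}{4}f$ to obtain a standard self-concordant function, apply the classical bound $g(x)-g(x^*)\leq \omega_*(\lambda_g(x))$ for $\lambda_g(x)<1$, and then bound $\omega_*$ quadratically under the stronger hypothesis $\lambda_g(x)<\frac{1}{1+\gamma^{-1}}$. The one (welcome) difference is that you establish the elementary estimate $\omega_*(t)\leq\frac{1+\gamma}{2}t^2$ directly via the power series $\omega_*(t)\leq \frac{t^2}{2(1-t)}$, whereas the paper isolates it as a separate lemma and verifies the endpoint case with a computer-algebra check.
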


\begin{proof}
Define $\omega_*(z)\eqdef -z-\ln(1-z)$.
Note first that, $h(x)\eqdef \frac{\varsigma^2}{4} f(x)$ is 2 self concordant~\citep{nesterov2018lectures}. As a consequence, if $\lambda_h(x)<1$ we have~\citep{nesterov2018lectures}
\[
h(x) - h(x^*) \leq  \omega_*( \lambda_h(x)  ).
\] 
If further $\lambda_h(x) \leq \frac{1}{1+\gamma^{-1}}$ due to Lemma~\ref{lem:snjodanj}, we get
\[
\omega_*( \lambda_h(x)  ) \leq \left(1+\gamma\right) \frac {\lambda_h(x) ^2}{2}.
\] 
As $\lambda_h(x) = \frac{\varsigma}{2}\lambda_f(x)$, we get~\eqref{eq:sc2}.
\end{proof}

\begin{lemma}\label{lem:snjodanj}
Let $c\in \R^+$ and $0\leq y \leq \frac{1}{1+c}$. Then we have $\omega_*( y  ) \leq \left(1+\frac{1}{c}\right) \frac {y ^2}{2}.$
\end{lemma}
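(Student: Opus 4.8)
The plan is to reduce the claim to the monotonicity of a single auxiliary function. Set
\[
g(y) \eqdef \left(1+\tfrac1c\right)\frac{y^2}{2} - \omega_*(y), \qquad y \in \left[0, \tfrac{1}{1+c}\right],
\]
and aim to show $g(y)\ge 0$ on this interval. Since $c>0$ we have $\tfrac{1}{1+c}<1$, so every point of the interval lies in the domain of $\omega_*(z)=-z-\ln(1-z)$, and the argument is legitimate. We have $g(0) = -\omega_*(0) = 0$, so it suffices to prove $g'(y)\ge 0$ throughout.

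First I would compute $\omega_*'(y) = -1 + \frac{1}{1-y} = \frac{y}{1-y}$, whence
\[
g'(y) = \left(1+\tfrac1c\right) y - \frac{y}{1-y} = y\left[\left(1+\tfrac1c\right) - \frac{1}{1-y}\right].
\]
The key observation is that on $[0,\tfrac{1}{1+c}]$ one has $1-y \ge 1 - \tfrac{1}{1+c} = \tfrac{c}{1+c} > 0$, and therefore $\frac{1}{1-y} \le \frac{1+c}{c} = 1 + \tfrac1c$. Hence the bracket is nonnegative and, since $y\ge 0$, we get $g'(y)\ge 0$ on the whole interval. Combined with $g(0)=0$ this yields $g(y)\ge 0$, i.e. $\omega_*(y) \le \left(1+\tfrac1c\right)\frac{y^2}{2}$, as claimed.

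An equivalent route, which I would mention as an alternative, is the power-series comparison $\omega_*(y) = \sum_{k\ge 2} \frac{y^k}{k} = \frac{y^2}{2} + \sum_{k\ge 3}\frac{y^k}{k} \le \frac{y^2}{2} + \frac12\sum_{k\ge 3} y^k = \frac{y^2}{2} + \frac{y^3}{2(1-y)}$, after which the hypothesis $y\le \tfrac{1}{1+c}$ rearranges to $cy \le 1-y$, i.e. $\frac{y}{1-y}\le \tfrac1c$, giving $\frac{y^3}{2(1-y)} \le \frac{y^2}{2c}$ and hence the bound.

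There is no genuine obstacle here: this is an elementary one-variable estimate. The only point requiring care is recognizing that the threshold $y\le \tfrac{1}{1+c}$ is precisely what is needed to force $\frac{1}{1-y}\le 1+\tfrac1c$ (equivalently $cy\le 1-y$), and making sure that $1-y>0$ on the interval so that $\omega_*$ and $\omega_*'$ are well defined; both are immediate from $c>0$.
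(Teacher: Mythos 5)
Your proof is correct, and it takes a cleaner route than the paper's. The paper argues via the series $\omega_*(y)=\sum_{i\ge 2} y^i/i$ that one may reduce the claim to checking the single endpoint inequality $\bigl(1+\tfrac{1}{c}\bigr)\tfrac{1}{2(1+c)^2}-\omega_*\bigl(\tfrac{1}{1+c}\bigr)\ge 0$, which it then verifies with Mathematica (the monotonicity statement in the paper is in fact stated with the wrong sign as written -- the difference $g(y)=\bigl(1+\tfrac1c\bigr)\tfrac{y^2}{2}-\omega_*(y)$ is non\emph{decreasing} on $[0,\tfrac{1}{1+c}]$, which is exactly your observation; what the paper presumably intends is that the normalized quantity $\omega_*(y)/y^2$ is nondecreasing, so that checking the right endpoint suffices). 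Your derivative computation $g'(y)=y\bigl[(1+\tfrac1c)-\tfrac{1}{1-y}\bigr]\ge 0$ on the interval, combined with $g(0)=0$, gives the conclusion directly and makes the computer-algebra step unnecessary; it also reveals that $y=\tfrac{1}{1+c}$ is precisely the critical point of $g$, explaining why the hypothesis is sharp. Your alternative power-series argument is likewise correct and self-contained. Either of your arguments could replace the paper's proof and would remove its reliance on Figure~\ref{fig:proof}.
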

\begin{proof}
Clearly $\omega_*( y  ) = \sum_{i=2}^\infty \frac{y^i}{i}$ and thus function $\left(1+\frac{1}{c}\right) \frac {y ^2}{2} - \omega_*( y  ) $ is non-increasing for $y\geq 0$. Therefore, it suffices to check verify $\left(1+\frac{1}{c}\right) \frac {1}{2(1+c)^2} - \omega_*( \frac{1}{1+c} ) \geq 0$, which is an easy task for Mathematica, see Figure~\ref{fig:proof}.\begin{figure}[H]
\centering
\includegraphics[scale=0.5]{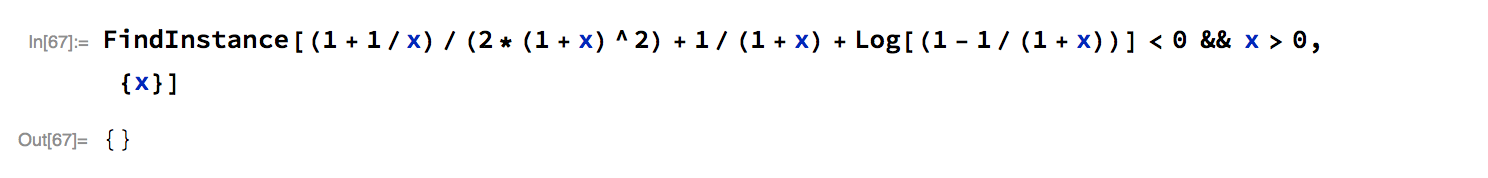}
\caption{Proof of $\left(1+\frac{1}{c}\right) \frac {1}{2(1+c)^2} - \omega_*( \frac{1}{1+c} ) \geq 0$ for all $c>0$.}
\label{fig:proof}
\end{figure}
\end{proof}

\subsection{Proof of Lemma~\ref{lem:decrease}}
Note that the update rule of SSCN yields immediately (using first-order optimality conditions)

\begin{equation} \label{eq:grad_equality}
-\mS^\top \nabla f(x)= \left( \nabla^2_{\mS}f(x) + \frac{1}{2} M_{\mS}\|x^+-x\| \mI^{\tau(\mS)}\right)\left(x^+-x\right)
\end{equation}

and therefore

\begin{eqnarray}
\left \| \mS^\top \nabla f(x) \right \|^{\frac12}
 &=&
 \left(
 \left(x^+-x\right)^\top\left( \nabla^2_{\mS}f(x) + \frac{1}{2} M_{\mS}\|x^+-x\| \mI^{\tau(\mS)} \right)^2\left(x^+-x\right) \right)^{\frac14}\nonumber
  \\
 & \geq & 
  \left(
 \left(x^+-x\right)^\top\left(  \frac{1}{2} M_{\mS}\|x^+-x\| \mI^{\tau(\mS)}\right)^2\left(x^+-x\right) \right)^{\frac14}\nonumber
 \\ 
 &=&
 \sqrt{\frac{M_{\mS}}{2}}\|x^+-x\|.
 \label{eq:stupid_bound}
\end{eqnarray}

Furthermore, taking dot product of~\eqref{eq:grad_equality} with $(x^+-x)$ yields
\[
\left\langle \mS^\top \nabla f(x), x^+-x\right\rangle+\left\langle  \nabla^2_{\mS}f(x)\left(x^+-x\right), x^+-x\right\rangle+\frac{1}{2} M_{\mS} \|x^+-x\|^3=0
\]
and thus 

\[
\begin{aligned} f(x)-f(x^+) 
&\stackrel{\eqref{eq:coordinate_ub}}{\geq}
\left\langle  \mS^\top \nabla f(x), x-x^+\right\rangle-\frac{1}{2}\left\langle   \nabla^2_{\mS}f(x) (x^+-x), x^+-x\right\rangle-\frac{M_{\mS}}{6}  \|x^+-x\|^3
 \\ 
 &=
\frac{1}{2}\left\langle   \nabla^2_{\mS} f(x) (x^+-x), x^+-x\right\rangle+\frac{M_{\mS}}{3}  \|x^+-x\|^3 
\\
 &\stackrel{(*)}{\geq}
 \frac12
\left(x^+-x\right)^\top\left( \nabla^2_{\mS}f(x) + \frac{1}{2} M_{\mS}\|x^+-x\| \mI^{\tau(\mS)} \right)\left(x^+-x\right)
\\
 &\stackrel{\eqref{eq:grad_equality}}{=}
 \frac12 \nabla f(x)^\top \mS \left( \nabla^2_{\mS}f(x) + \frac{1}{2} M_{\mS}\|x^+-x\| \mI^{\tau(\mS)} \right)^{-1}  \mS^\top\nabla f(x)
 \\
 &\stackrel{\eqref{eq:stupid_bound}}{\geq }
 \frac12 \nabla f(x)^\top \mS \left( \nabla^2_{\mS}f(x) +\sqrt{ \frac{M_{\mS}}{2}} \| \mS^\top \nabla f(x)\|^{\frac12} \mI^{\tau(\mS)} \right)^{-1}  \mS^\top \nabla f(x).
  \end{aligned}
\]
Above, in inequality $(*)$ we have used the fact that matrix $  \left( \nabla^2_{\mS}f(x) + \frac{1}{2} M_{\mS}\|x^+-x\| \mI^{\tau(\mS)}\right)$ is invertible since $f$ is strongly convex and thus $\nabla^2_{\mS}f(x) \succ 0$.

\subsection{Proof of Theorem~\ref{thm:local}}
First, suppose that $f(x^0)-f(x^*) \leq \varrho^4 \frac{2\left(\min_{x\in \level}\lambda_{\min} \nabla^2_{\mS} f(x)\right)^4}{LM_{\mS}^2\|\mS\|^{2}}$ for some $\varrho>0$. Using the fact that $\nabla^2_{\mS} f(x)$ is invertible ($\mS$ has full column rank and $\nabla^2 f(x) \succ 0$) we have
\begin{eqnarray}
\E{ \frac12 \| \mS^\top \nabla f(x^k)\|^2_{ \left( \mH(x^k) \right)^{-1} } } 
& \stackrel{\eqref{eq:ndajbdhahbj}}{\geq}&
\E{  \frac12 \nabla f(x)^\top {\mS}\left((1+\varrho) \nabla^2_{\mS}f(x)\right)^{-1} \mS^\top \nabla f(x)}
\nonumber
\\
&=&
 \frac{1}{2(1+\varrho)} \nabla f(x)^\top \E{ {\mS}\left( \nabla^2_{\mS}f(x)\right)^{-1}{\mS}^\top }\nabla f(x).
 \label{eq:dnjasnjaaa}
\end{eqnarray}

If further $f(x^0) - f(x^*)\leq  \varphi^2\frac{ \mu \left(\lambda_{\min}\nabla^2_{\mS} f(x^*)\right)^2 }{2 M_{\mS}^2}$ for some $\varphi>0$ we get

\begin{eqnarray}
\nonumber
\E{ \frac12 \| \mS^\top \nabla f(x^k)\|^2_{ \left( \mH(x^k) \right)^{-1} } } 
&\stackrel{\eqref{eq:dnjasnjaaa}}{ \geq}&
  \frac{ \nabla f(x)^\top \E{ {\mS}\left( \nabla^2_{\mS}f(x)\right)^{-1}{\mS}^\top }\nabla f(x)}{2(1+\varrho)}
  \\
  \nonumber
  &\stackrel{\eqref{eq:dasjnlsdhjkd}}{ \geq}&
    \frac{ \nabla f(x)^\top \E{ {\mS}\left( \nabla^2_{\mS}f(x^*)\right)^{-1}{\mS}^\top }\nabla f(x) }{2(1+\varrho)(1+\varphi)}
  \\
  \nonumber
  &\stackrel{\eqref{eq:sc_generalized}}{ \geq}&
  \frac{\nabla f(x)^\top \left(  \zeta \left(  \nabla^2f(x^*)\right)^{-1}\right) \nabla f(x) }{2(1+\varrho)(1+\varphi)}
    \\
    \label{eq:cjdinusvsibu}
  &\stackrel{\eqref{eq:dasjnlsdhjkd}}{ \geq}&
    \frac{\zeta \lambda_f(x)^2}{2(1+\varrho)(1+\varphi)^2}
\end{eqnarray}

Lastly, if if $f(x^0)-f(x^*)\leq \omega^{-1}\left( \frac{2\mu^{\frac32}}{(1+\gamma^{-1})M} \right)$ where $\omega(y)\eqdef y - \log(1+y)$ and $\gamma>0$, we get

\begin{eqnarray*}
\E{ \frac12 \| \mS^\top \nabla f(x^k)\|^2_{ \left( \mH(x^k) \right)^{-1} } } 
  &\stackrel{\eqref{eq:cjdinusvsibu}}{ \geq}&
      \frac{\zeta \lambda_f(x)^2}{2(1+\varrho)(1+\varphi)^2}
      \\
  &\stackrel{\eqref{eq:dojasjodasj}}{ \geq}&
    \frac{\zeta (f(x)-f(x^*))}{(1+\varrho)(1+\varphi)^2(1+\gamma)}
\end{eqnarray*}

and thus~\eqref{eq:local_rate} follows. In particular for any $\varrho, \varphi, \gamma>0$, we can choose 
\[
\delta = \min \left\{  \varrho^4 \frac{2\left(\min_{x\in \level}\lambda_{\min} \nabla^2_S f(x)\right)^4}{LM_S^2},  \varphi^2\frac{ \mu \left(\lambda_{\min}\nabla^2_S f(x^*)\right)^2 }{2 M_S^2} ,\omega^{-1}\left( \frac{2\mu^{\frac32}}{(1+\gamma^{-1})M} \right) \right\}
\]
and 
\[
\varepsilon = 1 -\frac{1}{(1+\varrho)(1+\varphi)^2(1+\gamma)}.
\]
\qed

\end{document}